\def\algbackskip{\hskip-\ALG@thistlm}
\newcommand{\footremember}[2]{%
    \footnote{#2}
    \newcounter{#1}
    \setcounter{#1}{\value{footnote}}%
}
\newcommand{\footrecall}[1]{%
    \footnotemark[\value{#1}]%
} 
\def\x{\mathbf{x}}
\def\a{\mathbf{a}}
\def\R{{\mathbb R}}
\def\N{{\mathbb N}}
\DeclareMathOperator{\conv}{conv}
\DeclareMathOperator{\diag}{diag}
\DeclareMathOperator{\supp}{supp}
\DeclareMathOperator{\trace}{trace}
\DeclareMathOperator{\s}{s}
\DeclareMathOperator{\minimal}{minimal}
\newcommand{\argmin}{\arg\min}
\newtheorem{theorem}{\bf Theorem}[section]
\newtheorem{lemma}[theorem]{\bf Lemma}
\newtheorem{example}[theorem]{\bf Example}
\newtheorem{proposition}[theorem]{\bf Proposition}
\newtheorem{corollary}[theorem]{\bf Corollary}
\newtheorem{definition}[theorem]{\bf Definition}
\newtheorem{remark}[theorem]{\bf Remark}
\newtheorem{assumption}[theorem]{\bf Assumption}
\providecommand{\keywords}[1]
{
  \small
  \textbf{\textbf{Keywords:}} #1
}
\pgfplotsset{compat=1.13}
\begin{document}

\definecolor{qqzzff}{rgb}{0,0.6,1}
\definecolor{ududff}{rgb}{0.30196078431372547,0.30196078431372547,1}
\definecolor{xdxdff}{rgb}{0.49019607843137253,0.49019607843137253,1}
\definecolor{ffzzqq}{rgb}{1,0.6,0}
\definecolor{qqzzqq}{rgb}{0,0.6,0}
\definecolor{ffqqqq}{rgb}{1,0,0}
\definecolor{uuuuuu}{rgb}{0.26666666666666666,0.26666666666666666,0.26666666666666666}
\newcommand{\vi}[1]{\textcolor{blue}{#1}}
\newif\ifcomment
\commentfalse
\commenttrue
\newcommand{\comment}[3]{%
\ifcomment%
	{\color{#1}\bfseries\sffamily#3%
	}%
	\marginpar{\textcolor{#1}{\hspace{3em}\bfseries\sffamily #2}}%
	\else%
	\fi%
}
\newcommand{\victor}[1]{
	\comment{blue}{V}{#1}
}
\definecolor{oucrimsonred}{rgb}{0.6, 0.0, 0.0}
\newcommand{\jean}[1]{
	\comment{oucrimsonred}{J}{#1}
}
\definecolor{cadmiumgreen}{rgb}{0.0, 0.42, 0.24}
\newcommand{\hoang}[1]{
	\comment{cadmiumgreen}{H}{#1}
}

\title{Exploiting constant trace property in large-scale polynomial optimization}
\author{%
Ngoc Hoang Anh Mai\footremember{1}{CNRS; LAAS; 7 avenue du Colonel Roche, F-31400 Toulouse; France.} \and %
   Jean-Bernard Lasserre\footrecall{1} \footremember{2}{Institute of Mathematics, Universit\'e de Toulouse; F-31400 Toulouse, France.} \and  %
   Victor Magron\footrecall{1} \footrecall{2}   \and  %
   Jie Wang\footrecall{1}
  }
\maketitle
\begin{abstract}
\small
We prove that every semidefinite moment relaxation of a polynomial optimization problem (POP) with a ball constraint can be reformulated as a semidefinite program involving a matrix with constant trace property (CTP).
As a result such moment relaxations can be solved efficiently by first-order methods that exploit CTP, e.g., the conditional gradient-based augmented Lagrangian method.
We also extend this CTP-exploiting framework to large-scale POPs with different sparsity structures. The efficiency and scalability of our framework are illustrated 
on second-order moment relaxations for various randomly generated quadratically constrained quadratic programs.
\end{abstract}
\keywords{polynomial optimization, moment-SOS hierarchy, conditional gradient-based augmented Lagrangian, constant trace property, semidefinite programming}
{\small \tableofcontents}
\section{Introduction}
This paper is in the line of recent efforts to promote first-order methods as a viable alternative to interior-point methods (IPM) for solving large-scale conic optimization problems, in particular large-scale semidefinite programming (SDP) relaxations of polynomial optimization problems (POPs).
We show that a wide class of POPs have a nice property, namely the constant trace property (CTP), and that this property can be exploited in combination with  first-order methods to solve large-scale SDP relaxations associated with a POP.
So far, this property has been exploited only in a few cases, the most prominent examples being the Shor's  relaxation of Max-Cut \cite{yurtsever2019scalable}, in which the authors are able to handle SDP matrices of huge size, and equality constrained POPs on the sphere  \cite{mai2020hierarchy}.

Given polynomials $f,g_i,h_j$, let us consider the following POP with $n$ variables, $m$ inequality constraints and $l$ equality constraints:
\begin{equation}\label{eq:POP.def.intro}
    f^\star:=\min\{f(\mathbf x)\,:\quad g_i(\mathbf x)\ge 0\,,\,i\in[m]\,,\quad h_j(\mathbf x)= 0\,,\,j\in[l]\}\,,
\end{equation}
where $[m]:=\{1,\dots,m\}$ and $[l]:=\{1,\dots,l\}$. 
In general POP \eqref{eq:POP.def.intro} is non-convex, NP-hard. It is well known that under some mild condition, the optimal value $f^\star$ of POP \eqref{eq:POP.def.intro} can be approximated as closely as desired by the so-called Moment-Sums of squares (Moment-SOS) hierarchy \cite{lasserre2001global}. There are a lot of important applications of POP \eqref{eq:POP.def.intro} as well as the Moment-SOS hierarchy; the interested readers are referred to the monograph \cite{henrion2020}.

\paragraph{Computational cost of moment relaxations.}
The $k$-th order moment relaxation for POP \eqref{eq:POP.def.intro}
can be rewritten in compact form as the following standard SDP:
\begin{equation}\label{eq:SDP.form.intro}
\tau = \inf _{\mathbf X \in \mathcal S^+} \{ \left< \mathbf C,\mathbf X\right>\,:\,\left< \mathbf A_j, \mathbf X\right>= b_j\,,\,j\in [\zeta]\}\,,
\end{equation}
where $\mathcal{S}^+$ is the set of positive semidefinite (psd) matrices in a block diagonal form:
$\mathbf X=\diag(\mathbf X_1,\dots,\mathbf X_\omega)$
with $\mathbf X_j$ being a block of size $s^{(j)}$, $j\in [\omega]$ and $\zeta$ is the number of affine constraints.
We denote the largest block size by $s^{\max}:=\max_{j\in[\omega]}s^{(j)}$.

We say that SDP \eqref{eq:SDP.form.intro} has \emph{constant trace property} (CTP) if there exists a positive real number $a$ such that $\trace(\mathbf X) = a$, for all feasible solution $\mathbf X$ of SDP \eqref{eq:SDP.form.intro}.
We also say that POP \eqref{eq:POP.def.intro} has CTP when every moment relaxation of POP \eqref{eq:POP.def.intro} has CTP.

Table \ref{tab:comparison.SDP.solver} lists several available methods for solving SDP \eqref{eq:SDP.form.intro}.
In particular, observe that two of them, CGAL and SBM, are first-order methods that exploit CTP. 
In \cite{yurtsever2019scalable}, the authors combined CGAL with the Nystr\"om sketch (named SketchyCGAL), which require dramatically less storage than other methods and is very efficient for solving Shor's relaxation of large-scale MAX-CUT instances.

\begin{table}
    \caption{\small Complexity comparison of several methods for solving SDP. IP: interior point methods; ADMM: the alternating direction method of multipliers; SBM: spectral bundle methods; CGAL: conditional gradient-based augmented Lagrangian.}
    \label{tab:comparison.SDP.solver}
\small
\begin{center}
\begin{tabular}{|m{1.6cm}|m{1.8cm}|m{1.2cm}|m{1.8cm}|m{3.8cm}|}
\hline
Method & Software & SDP type & Convergence rate & The most expensive parts per iteration\\
\hline
IP \cite{helmberg1996interior}

(second-order)& Mosek \cite{mosek}& Arbitrary & $\mathcal{O}(\log(1/\varepsilon))$   \cite{vandenberghe1996semidefinite} &  System of linear equations solving with $\mathcal{O}((s^{\max})^6)$ \cite[Table 1]{vandenberghe2005interior}\\
\hline
ADMM \cite{boyd2011distributed} 

(first-order)& SCS\cite{o2016conic}, COSMO\cite{garstka2019cosmo} & Arbitrary &  $\mathcal{O}(\varepsilon^{-1})$  \cite{hong2017linear}& Positive definite system  of linear equations solving by $LDL^\top$-decomposition with $\mathcal{O}((s^{\max})^6)$\\

\hline
SBM \cite{helmberg2000spectral} 

(first-order)& ConicBundle \cite{helmberg2014spectral} & with CTP & $\mathcal{O}({\log(1/\varepsilon)}/{\varepsilon})$   \cite{ding2020revisit} & 
Positive definite linear system solving with $\mathcal{O}((s^{\max})^6)$\\
\hline
CGAL \cite{yurtsever2019conditional} 

(first-order) & SketchyCGAL \cite{yurtsever2019scalable}& with CTP & $\mathcal{O}(\varepsilon^{-1/2})$ & Smallest eigenvalue computing by the Arnoldi iteration with $\mathcal{O}(s^{\max})$ \cite{lee2009k}\\
\hline
\end{tabular}    
\end{center}
\end{table}

Note that SDP-relaxation \eqref{eq:SDP.form.intro} of POP \eqref{eq:POP.def.intro} at step $k$ of the Moment-SOS hierarchy has $\omega=m+1$ blocks whose largest  size  is $s^{\max}=\binom{n+k}{n}$ while the number of affine constraints is  $\zeta=\mathcal{O}(\binom{n+k}{n}^2)$.
Thus the computational cost for solving SDP \eqref{eq:SDP.form.intro} grows very rapidly with $k$.
Fortunately, it is usually possible to reduce the size of this SDP relaxation by exploiting certain structures of POP \eqref{eq:POP.def.intro}. Table \ref{tab:exploting.sparsity} lists some of these structures.

\begin{itemize}
\item Correlative sparsity (CS), term sparsity (TS) and their combination (CS-TS) are applied to POPs \eqref{eq:POP.def.intro} in case that the data $f,g_i,h_j$ are sparse polynomials.
The main idea of CS, TS and CS-TS is to break the moment matrices and localizing matrices (which are the psd matrices in the Moment-SOS relaxation) into a lot of blocks according to certain sparsity patterns derived from the POP. If the largest block size is relatively small (say $s^{\max}\le 100$), then the corresponding SDP can be solved efficiently. But if the largest block size is still large (say $s^{\max}\ge 200$), then the corresponding SDP remains hard to solve.

\item In the previous work \cite{mai2020hierarchy}, the first three
authors exploited CTP for equality constrained POPs on the sphere and converted the resulting SDP relaxations to spectral minimization problems which could be solved by LMBM efficiently.
This method returns approximate optimal values of SDP relaxations involving 
$2000\times 2000$ matrices for which Mosek encounters memory issues and SketchyCGAL is much less efficient. Importantly,
the moment SDP-relaxation of an equality constrained POP has a \emph{single} psd matrix. 
In contrast, for a POP involving a ball constraint (with possibly other inequality constraints), the resulting moment SDP-relaxations include several psd matrices.
{Unfortunately for such SDPs, LMBM usually returns inaccurate values even when CTP holds because of ill-conditioning issues. 
LMBM only updates the dual variables, so it is hard to ensure that the KKT conditions hold.}
{We can overcome the latter ill-conditioning issues by relying on a primal-dual algorithm such as CGAL.
It turns out that CGAL (without sketching) is suitable for this type of SDP.}
{For an SDP involving a single matrix, SketchyCGAL stores updated matrices by means of Nystr\"om sketch. In our experimental setting, we rather consider CGAL without sketching, which boils down to relying on implicit updated matrices.
It turns out that this strategy is much faster than the one based on Nystr\"om sketch, but does not provide the primal (matrix) solution.}

\end{itemize}

\begin{table}
    \caption{\small Several special structures for reducing complexity of the Moment-SOS relaxations.}
    \label{tab:exploting.sparsity}
\small
\begin{center}
\begin{tabular}{|m{1.6cm}|m{2cm}|m{7.3cm}|}
\hline
Structure & Software & POP type \\
\hline
 CS \cite{Waki06SparseSOS, lasserre2006convergent}& SparsePOP  \cite{waki2008algorithm}& $f=\sum_{j\in[p]} f_j$ 
and $f_j, (g_i)_{i\in J_j}, (h_i)_{i\in W_j}$ share the same variables for every $j\in[p]$ and $p>1$\\
\hline
TS \cite{wang2019tssos, wang2019second}& TSSOS & $f,g_i, h_j$ involve a few of terms\\
\hline
CS-TS \cite{wang2020cs}& TSSOS & Both CS and TS hold\\
\hline
CTP\cite{mai2020hierarchy} & SpectralPOP & Equality constrained POPs on a sphere ($m=0$ and $h_1:=R-\|\mathbf x\|_2^2$) 
\\
\hline
\end{tabular}    
\end{center}
\end{table}

\paragraph{SDP relaxations of non-convex quadratically constrained quadratic programs.}  A non-convex quadratically constrained quadratic (QCQP) program is a special instance of POP \eqref{eq:POP.def.intro} for which the degrees of the input polynomials are at most two. Famous instances of non-convex QCQPs include the MAX-CUT problem and the optimal power flow (OPF) problem \cite{josz2018lasserre}; in addition we recall that that LCQPs have an equivalent MAX-CUT formulation \cite{Lasserre-Maxcut}.
They also have applications in deep learning, e.g., the computation of Lipschitz constants \cite{chen2020semialgebraic}
and the stability analysis of recurrent neural networks
\cite{ebihara2020l_2}.
In practice, non-convex QCQPs usually involve a large number of variables (say $n\ge 1000$) and
their associated SDP relaxations \eqref{eq:SDP.form.intro} can be classified in two groups as follows:
\begin{itemize}
    \item \textbf{The first order relaxation}: $k=1$ (also known as Shor's relaxation in the literature).
    In this case the number of affine constraints in SDP \eqref{eq:SDP.form.intro} is typically not larger than the largest block size, i.e., $\zeta \le  s^{\max}$. 
    It can be efficiently solved by most SDP solvers, in particular with SketchyCGAL \cite{yurtsever2019scalable}.  
    Nevertheless the first order relaxation may only provide a lower bound for the optimal value of POP \eqref{eq:POP.def.intro}. 
    In this case, one needs to solve the second and perhaps even higher-order relaxations to obtain tighter bounds or achieve the global optimal value.
    \item \textbf{The second and higher-order relaxations}: $k\ge 2$. In this case the number of affine constraints in SDP \eqref{eq:SDP.form.intro} is typically much larger than the largest block size ($\zeta\gg s^{\max}$). Then unfortunately most SDP solvers cannot handle large-scale SDPs of this form. 
    In our previous work \cite{mai2020hierarchy}, we proposed a remedy for the particular case of second-order SDP relaxations of equality constrained POPs on the sphere, by relying on first-order solvers such as LMBM.
    \end{itemize}
    \paragraph{Common issues of solving large-scale SDP relaxations.} 
    When solving the second and higher-order SDP relaxations, SDP solvers often encounter the following issues:
    \begin{itemize}
        \item\textbf{Storage}: The interior-point methods (IPM) are often chosen by users because of their highly accurate output. 
        These methods are efficient for solving medium-scale SDPs.
        However they frequently fail due to lack of memory when solving large-scale SDPs (say $s^{\max}>500$ and $\zeta>2\times10^5$ on a standard laptop). Then first-order methods (e.g., ADMM, SBM, CGAL) provide an alternative to IPM to avoid the memory issue.
        This is due to the fact that the cost per iteration of first-order methods is much cheaper than that of IPM. 
        
        At the price of losing convexity one can also rely on heuristic methods and  replace the full matrix $\mathbf X$ in SDP \eqref{eq:SDP.form.intro} by a simpler one, in order to save memory. For instance, the Burer-Monteiro method \cite{burer2005local} considers a low rank factorization of $\mathbf X$. However, to get correct results the rank cannot be too low \cite{waldspurger2020rank} and therefore this limitation makes it useless for the second and higher-order relaxations of POPs.
        Not suffering from such a limitation, CGAL not only maintains the convexity of SDP \eqref{eq:SDP.form.intro} but also possibly runs with implicit matrix $\mathbf X$ as described in Remarks \ref{re:implicit} and \ref{re:implicit.block}.
        \item \textbf{Accuracy}: Nevertheless, first-order methods have low convergence rates compared to the interior-point methods.
        Their performance depends heavily on the problem scaling and conditioning.
        As a result, in solving large-scale SDPs with first-order methods it is often difficult to obtain results with high accuracy. In contrast the relative gap of the value returned by first-order SDP solvers w.r.t. the exact value is usually expected to be less than 1\%.
    \end{itemize}
    
The goal of this paper is to provide a method which returns the optimal value of the second-order moment SDP-relaxation and which is suitable for a class of large-scale non-convex QCQPs with CTP. 
Ideally (i) it should avoid the memory issue, and (ii) the resulting relative gap of the approximate value returned by this method w.r.t. the exact value, should be less than 1\%.

\paragraph{Contribution.}
We show that (i) (a large class of) POPs have a very nice \emph{constant trace property} and (ii) that this property can be exploited for solving their associated semidefinite relaxations via appropriate first-order methods. More precisely our contribution is threefold:
\begin{enumerate}
    \item In Section \ref{sec:sufficient.CTP} we show that if a positive real number belongs to the interior of every truncated quadratic module associated to the inequality constraints, then the corresponding POP has CTP.
    Moreover, we prove that this condition always holds when a ball constraint is present.
    \item In Section \ref{sec:obtain.CTP.SDP} we provide  a linear programming approach to check whether a POP has CTP. With this approach we prove in Section \ref{sec:special.POP.CTP} that several special classes of POPs (including POPs on a ball, annulus, simplex) have CTP.
    \item Our final contribution is to handle sparse large-scale POPs by integrating sparsity-exploiting techniques into the CTP-exploiting framework.
\end{enumerate}
For practical implementation we have provided
a software library called ctpPOP.
It consists of modeling each moment SDP-relaxation of POPs as a standard SDP with CTP and then solving this SDP by CGAL or the spectral method (SM) with nonsmooth optimization solvers (LMBM or PBM).

In Section \ref{sec:benchmark} we provide extensive numerical experiments to illustrate the efficiency and scalability of ctpPOP with the CGAL solver. In all our randomly generated POPs with different sparsity structures, 
the relative gap of the optimal value provided by CGAL w.r.t. the optimal value provided by Mosek is below 1\%.
Because of its very cheap cost per iteration, CGAL is more suitable for particularly bulky SDPs (such as moment SDP-relaxations of POPs) than other solvers (e.g. COSMO).

For instance for  minimizing a \emph{dense} quadratic polynomial on the unit ball
with up to $100$ variables, CGAL returns the optimal value of the second-order moment SDP relaxation within $6$ hours on a standard laptop while Mosek (considered  state-of-the-art IPM SDP solver) runs out of memory. Similarly, for minimizing a \emph{sparse} quadratic polynomial involving thousand variables, with  a ball constraint on each clique of variables, CGAL spends around two thousand seconds to solve the second-order moment SDP-relaxation while Mosek runs again out of memory. The largest clique of this POP involves $41$ variables. 

Classical Optimal Power Flow (OPF) problem without constraints on current magnitudes (as in \cite{josz2016ac,godard2019novel}) can be formulated as a POP with ball and annulus constraints. In many instances Shor's relaxation usually provides the global optimum.
However, for illustration purpose we have compared  CGAL and Mosek for solving the 
second-order CS-TS relaxation for one instance ``case89\_pegase\_\_api" from the PGLib-OPF database\footnote{https://github.com/power-grid-lib/pglib-opf}.
The largest block size and the number of equality constraints of this SDP are around 1.7 thousand and 8 million, respectively.
While Mosek failed because of memory issue, CGAL still returns the optimal value in 2 days, and with the relative gap w.r.t. a local optimal value less than 0.6\%. 


\section{Notation and preliminary results}

With $\x = (x_1,\dots,x_n)$, let $\R[\x]$ stand for the ring of real polynomials and let $\Sigma[\x]\subseteq\R[\x]$ be the subset of sum of squares (SOS) polynomials.
Their restrictions to polynomials of degree at most $d$ and $2d$ are denoted by $\R[\x]_d$ and $\Sigma[\x]_d$ respectively. 
For $\alpha = (\alpha_1,\dots,\alpha_n) \in \N^n$, let $|\alpha|:= \alpha_1 + \dots + \alpha_n$.
Let $\N^n_d:=\{\alpha\in\N^n : |\alpha|\le d\}$.
Let $(\x^{ \alpha})_{\alpha\in\N^n}$ 
be the canonical monomial basis of $\R[\x]$ (sorted w.r.t. the graded lexicographic order) and 
$\mathbf v_d(\x)$ be the vector of monomials of degree up to $d$, with length $\s(d,n) := \binom{n+d}{n}$. 
when it is clear from the context, we also write $\s(d)$ instead of $\s(d,n)$.
A polynomial $p\in\R[\x]_d$ can be written as  
$p(\x)\,=\,\sum_{\alpha\in\N^n_d} p_\alpha\,\x^\alpha\,=\,\mathbf{p}^\top\mathbf  v_d(\x)$, 
where $\mathbf{p}=(p_\alpha)\in\R^{\\s(d)}$ is the vector of coefficients in the canonical monomial basis. 
For $p\in\R[\x]$, let $\lceil p\rceil:=\lceil{\rm deg}(p)/2\rceil$. For a positive integer $m$, let $[m]:=\{1,2,\ldots,m\}$.
The $l_1$-norm of a polynomial $p$ is given by the $l_1$-norm of the vector of coefficients $\mathbf{p}$, that is $\|\mathbf{p}\|_1 := \sum_{\alpha} |p_\alpha|$. Given $\a\in\R^n$, the $l_2$-norm of $\a$ is  $\|\a\|_2:=(a_1^2+\dots+a_n^2)^{1/2}$ and the maximum norm of $\a$ is $\|\a\|_{\infty}:=\max\{|a_j|:j\in[n]\}$. Given a subset $\mathcal{S}$ of real symmetric matrices, let $\mathcal{S}^+:=\{\mathbf X\in \mathcal{S}\,:\, \mathbf X\succeq 0\}$. For $I\subseteq [n]$, let $\mathbf x(I):=\{x_j:j\in I\}$ and $\N^I_d:=\{\alpha\in\N^n_d:\supp(\alpha)\subseteq I\}$.
 
\paragraph{Polynomial optimization problem.}
A polynomial optimization problem (POP) is defined as
\begin{equation}\label{eq:POP.def}
f^\star:=\inf \{f(\mathbf x)\ :\ \mathbf x\in S(g)\cap V(h)\}\,,
\end{equation}
where $S(g)$ and $V(h)$ are a basic semialgebraic set and a real variety defined respectively by:
\begin{eqnarray}
    \nonumber
    S(g) &:=&\{\,\mathbf x\in\R^n:\: g_i(\mathbf x)\ge  0\,,\,i\in [m]\,\}\\
    \label{eq:semialg.set.real.variety}
    V(h) &:=&\{\,\mathbf x\in\R^n:\: h_j(\mathbf x)=0\,,\,i\in [l]\,\}\,,
\end{eqnarray}
for some polynomials $f,g_i,h_j\in\R[\mathbf x]$ with $g:=\{g_i\}_{i\in[m]}$, $h:=\{h_j\}_{j\in[l]}$. We will assume that POP \eqref{eq:POP.def} has at least one global minimizer.

\paragraph{Riesz linear functional.} Given  a real-valued sequence $\mathbf y=(y_\alpha)_{\alpha\in\N^n}$, define the 
Riesz linear functional $L_{\mathbf y}:\R[ \mathbf x ] \to \R$, $f\mapsto {L_{\mathbf y}}( f ) := \sum_{\alpha} f_\alpha y_\alpha$.  
Let $d$ be a positive integer. A real infinite (resp. finite) sequence $( y_\alpha)_{\alpha\in \N^n}$ (resp. $( y_\alpha)_{\alpha  \in \N^n_d}$) has a \emph{representing measure} if there exists a finite Borel measure $\mu$ such that $y_\alpha  = \int_{\R^n} {\mathbf x^\alpha d\mu(\mathbf x)}$ for every $\alpha  \in {\N^n}$ (resp. $\alpha  \in {\N^n_d}$). 
In this case, $( y_\alpha)_{\alpha  \in \N^n}$ is called be the moment sequence of $\mu$. 
We denote by $\supp(\mu)$ the support of a Borel measure $\mu$.

\paragraph{Moment/Localizing matrix.} The moment matrix of order $d$ associated with a real-valued sequence $\mathbf y=(y_\alpha)_{\alpha  \in \N^n}$ and $d\in \N^{>0}$,
is the real symmetric matrix $\mathbf M_d(\mathbf y)$ of size $s(d)$,  with entries
$( y_{\alpha  + \beta })_{\alpha,\beta\in \N^n_d} $. 
The localizing matrix of order $d$ associated with $\mathbf y=(y_\alpha)_{\alpha  \in \N^n}$ and $p = \sum_{\gamma} p_\gamma x^\gamma  \in \R[\mathbf x]$, is the real symmetric matrix $\mathbf M_d(p\,\mathbf y)$ of size $s(d)$ 
with entries $(\sum_\gamma  {{p_\gamma }{y_{\gamma  + \alpha  + \beta }}})_{\alpha, \beta\in \N^n_d}$.

\paragraph{Quadratic module.}
Given $g=\{g_i:i\in[m]\}\subseteq\R[\x]$, the \emph{quadratic module} associated with $g$ is defined by
$Q(g): = \{ \sigma_0+\sum_{i \in[m]}\sigma_ig_i  \ :\ \sigma_0 \in \Sigma[ \mathbf x]\,,\, \sigma_i \in \Sigma[ \mathbf x]\}$, and for a positive integer $k$, the set
$Q_k(g): = \{ \sigma_0+\sum_{i \in[m]}\sigma_ig_i  \,:\,
      \sigma_0 \in \Sigma[ \mathbf x]_k\,,\,
     \sigma_i \in \Sigma[ \mathbf x]_{k-\lceil g_i \rceil}\}$
is the truncation of $Q(g)$ of order $k$. 
\paragraph{Ideal.}
Given $h=\{h_i:i\in[l]\}\subseteq\R[\x]$, the set
$I(h): = \{ \sum_{j \in[l]}\psi_jh_j  \ :\  \psi_j \in \R[ \mathbf x]\}$
is the \emph{ideal} generated by $h$, and the set $I_k(h): = \{ \sum_{j \in[l]}\psi_jh_j   \,:\,
     \psi_j \in \R[ \mathbf x]_{2(k-\lceil h_j \rceil)}\}$
is the truncation of $I(h)$ of order $k$.

\paragraph{Archimedeanity.} 
Assume that there exists $R>0$ such that $R-\|\mathbf x\|_2^2\in Q(g)+I(h)$. 
As a consequence, $S(g)\cap V(h)\subseteq \mathcal B_R$, where $\mathcal{B}_R:=\{\mathbf x\in\R^n\,:\,\|\mathbf x\|_2\le \sqrt R\}$. 
In this case, we say that $Q(g)+I(h)$ is  \emph{Archimedean} \cite{lasserre2010moments}. 

\paragraph{The Moment-SOS hierarchy \cite{lasserre2001global}.}

Given a POP \eqref{eq:POP.def}, consider the following associated hierarchy of SOS relaxations indexed by $k\in\N^{\ge k_{\min}}$ with $k_{\min}:=\max\{\lceil f\rceil, \{\lceil g_i\rceil\}_{i\in[m]},\{\lceil h_j\rceil\}_{j\in[l]}\}$:
\begin{equation}\label{eq:sos.hierarchy.0}
\rho_k\,:=\,\sup \{\,\xi \in\R\ :\ f-\xi \in Q_k(g)+I_k(h)\}\,.
\end{equation}
For each $\sigma\in\Sigma[x]_d$, there exists $\mathbf G\succeq 0$ such that $\sigma=\mathbf v_d^\top\mathbf G\mathbf v_d$.
Thus for each $k\in\N^{\ge k_{\min}}$, \eqref{eq:sos.hierarchy.0} can be rewritten as an SDP:
\begin{equation}\label{eq:sos.hierarchy}
\rho_k = \sup \limits_{\xi,\mathbf G_i,\mathbf u_j} \left\{\xi\ \left|
\begin{array}{rl}
&\mathbf G_i \succeq 0\,,\,
f-\xi=\mathbf v_k^\top \mathbf G_0 \mathbf v_k\\
&\quad\quad\quad\qquad\qquad+\sum_{i\in[m]} g_i \mathbf v_{k-\lceil g_i\rceil}^\top \mathbf G_i \mathbf v_{k-\lceil g_i\rceil}\\
&\quad\quad\quad\qquad\qquad+ \sum_{j\in[l]} h_j \mathbf v_{2(k-\lceil h_j\rceil)}^\top \mathbf u_j
\end{array}\right. \right\}\,.
\end{equation}

For every $k\in \N^{\ge k_{\min}}$, the dual of \eqref{eq:sos.hierarchy} reads as
\begin{equation}\label{eq:moment.hierarchy}
\tau_k \,:= \,\inf \limits_{\mathbf y \in {\R^{\s({2k})} }} \left\{ L_{\mathbf y}(f)\ \left|\begin{array}{rl}
& \mathbf M_k(\mathbf y) \succeq 0\,,\:y_{\mathbf 0}\,=\,1\\
&\mathbf M_{k - \lceil g_i \rceil }(g_i\;\mathbf y)   \succeq 0\,,\,i\in[m]\\
&\mathbf M_{k - \lceil h_j \rceil }(h_j\;\mathbf y)   = 0\,,\,j\in[l]
\end{array}
\right. \right\}\,.
\end{equation}

If $Q(g)+V(h)$ is Archimedean, then both $(\rho_k)_{k\in\N^{\ge k_{\min}}}$ and $(\tau_k)_{k\in\N^{\ge k_{\min}}}$ converge to $f^\star$. For details on the Moment-SOS hierarchy and its various applications the interested reader is referred to
\cite{lasserre2010moments}.

\section{Exploiting CTP for dense POPs}
\label{sec:ctp.dense.pop}
This section is devoted to developing a framework to exploit CTP for dense POPs. 
We provide a sufficient condition for a POP to have CTP, as well as 
a series of linear programs to check whether the sufficient condition holds. In addition we show that several special classes of POPs have CTP. 

\subsection{CTP for dense POPs}
\label{sec:spectral.relax}
First let us define CTP for a POP. To simplify notation, for every $k\in\N^{\ge k_{\min}}$, denote by $\mathcal{S}_k$ the set of real symmetric matrices 

- of size $s_k:=\s(k)+\sum_{i\in[m]}\s(k-\lceil g_i\rceil)$,

- in a block diagonal form 
$\mathbf X=\diag(\mathbf X_0,\dots,\mathbf X_m)$, and such that

- $\mathbf X_0$ (resp. $\mathbf X_i$) is of size $\s(k)$ (resp. $\s(k-\lceil g_i\rceil)$ for $i\in [m]$).\\

Letting $\mathbf D_k(\mathbf y):=\diag(\mathbf M_k(\mathbf y),\mathbf M_{k-\lceil g_1\rceil}(g_1\mathbf y),\dots, \mathbf M_{k-\lceil g_m\rceil}(g_m\mathbf y))$, 
SDP \eqref{eq:moment.hierarchy} can be rewritten in the form:
\begin{equation}\label{eq:dual.diag.moment.mat}
\tau_k \,:= \,\inf\limits_{\mathbf y \in \R^{\s(2k)} }\left\{  L_{\mathbf y}(f)\ \left|\begin{array}{rl}
&\mathbf D_k(\mathbf y) \in \mathcal S_k^+\,,\,y_{\mathbf 0}=1\,,\\
&\mathbf M_{k-\lceil h_i\rceil}(h_i\mathbf y)=0\,,\,i\in[l]
\end{array}
\right. \right\}\,.
\end{equation}
\begin{definition}\label{def:ctp}
(CTP for a POP)
We say that POP \eqref{eq:POP.def} has CTP if for every $k\in \N^{\ge k_{\min}}$, there exists $a_k>0$ and a positive definite matrix $\mathbf P_{k}\in \mathcal{S}_k$ such that for all $\mathbf y \in \R^{\s(2k)}$,
\begin{equation}
    \left.
\begin{array}{rl}
&\mathbf M_{k-\lceil h_i\rceil}(h_i\mathbf y)=0\,,\,i\in[l]\,,\\
&y_{\mathbf 0}=1
\end{array}
\right\}\Rightarrow  \trace(\mathbf P_{k} \mathbf D_k(\mathbf y) \mathbf P_{k})=a_k\,.
\end{equation}
\end{definition}
In other words, we say that POP \eqref{eq:POP.def} has CTP if each moment relaxation \eqref{eq:dual.diag.moment.mat} has an equivalent form involving a psd matrix whose trace is constant. In this case,
we call $a_k$ the constant trace and $\mathbf P_k$ the basis transformation matrix. In the next subsection, we provide a sufficient condition for POP \eqref{eq:POP.def} to have CTP. 

\begin{example} (CTP for equality constrained POPs on a sphere \cite{mai2020hierarchy})  
If $g=\emptyset$ and $h_1=R-\|\mathbf x\|_2^2$ for some $R>0$, then POP \eqref{eq:POP.def} has CTP with $a_k=(R+1)^k$ and $\mathbf P_{k}:=\diag((\theta^{1/2}_{k,\alpha})_{\alpha\in\N^n_k})$,
where $(\theta_{k,\alpha})_{\alpha\in\N^n_k}\subseteq \R^{>0}$ satisfies $(1+\|\mathbf
x\|_2^2)^k=\sum_{\alpha\in\N^n_k}\theta_{k,\alpha}\mathbf x^{2\alpha}$, for all $k\in\N^{\ge k_{\min}}$.
\end{example}

We now provide a general method to solve a POP with CTP. We first convert the $k$-th order moment relaxation \eqref{eq:dual.diag.moment.mat} of this POP to a standard primal SDP problem with CTP and then leverage appropriate first-order algorithms that exploit CTP to solve the resulting SDP problem.

Suppose POP \eqref{eq:POP.def} has CTP. 
For every $k\in\N^{\ge k_{\min}}$, letting $\mathbf X=\mathbf P_{k}\mathbf D_k(\mathbf y)\mathbf P_{k}$,
\eqref{eq:dual.diag.moment.mat} can be rewritten as
\begin{equation}\label{eq:SDP.form}
\tau_k = \inf _{\mathbf X\in \mathcal{S}_k^+} \{ \left< \mathbf C_k,\mathbf X\right>\,:\,\mathcal{A}_k \mathbf X=\mathbf b_k\}\,,
\end{equation}
where $\mathcal{A}_k:\mathcal{S}_k\to \R^{\zeta_k}$ is a linear operator such that
$\mathcal{A}_k\mathbf X=(\left< \mathbf A_{k,1},\mathbf X\right>,\dots,\left< \mathbf A_{k,\zeta_k},\mathbf X\right>)$
with $\mathbf A_{k,i} \in \mathcal{S}_k$, $i\in[\zeta_k]$, $\mathbf C_k \in \mathcal{S}_k$ and $\mathbf b_k\in \R^{\zeta_k}$. 
Appendix \ref{sec:convert.standart.SDP} describes how to convert SDP \eqref{eq:dual.diag.moment.mat} to the form \eqref{eq:SDP.form}.

The dual of SDP \eqref{eq:SDP.form} reads as 
\begin{equation}\label{eq:SDP.form.dual}
\rho_k = \sup _{\mathbf z\in\R^{\zeta_k}} \,\{ \,\mathbf b_k^\top\mathbf z\,:\,
\mathcal{A}_k^\top \mathbf z-\mathbf C_k\in \mathcal S_k^+\}\,,
\end{equation}
where $\mathcal{A}_k^\top:\R^{\zeta_k}\to \mathcal{S}_k$ is the adjoint operator of $\mathcal{A}_k$, i.e., $\mathcal{A}_k^\top\mathbf z=\sum_{i\in[\zeta_k]} z_i\mathbf A_{k,i}$.




After replacing $(\mathcal{A}_k, \mathbf{A}_{k,i},  \mathbf b_k, \mathbf C_k, \mathcal{S}_k, \zeta_k, s_k, \tau_k, \rho_k, a_k)$ by $(\mathcal{A}, \mathbf{A}_{i}, \mathbf b, \mathbf C, \mathcal{S}, \zeta, s, \tau, \rho, a)$, the primal-dual \eqref{eq:SDP.form}-\eqref{eq:SDP.form.dual} has an equivalent formulation as the primal-dual \eqref{eq:SDP.form.0}-\eqref{eq:SDP.form.dual.0}; see also Appendix \ref{sec:sdp.ctp.dense} with $\omega=m+1$ and $s^{\max}=s(k)$.

Then two first-order algorithms (CGAL and SM) are leveraged for solving the primal-dual \eqref{eq:SDP.form.0}-\eqref{eq:SDP.form.dual.0}; see Appendix \ref{sec:sdp.ctp.dense} and Appendix \ref{sec:spectral.method.dense}. 

\subsection{A sufficient condition for a POP to have CTP}
\label{sec:sufficient.CTP}
In this section, we provide a sufficient condition for POP \eqref{eq:POP.def} to have CTP.

For $k\in\N^{\ge k_{\min}}$, let $Q_k^\circ(g)$ be the interior of the truncated quadratic module $Q_k(g)$, i.e.,
$Q_k^\circ(g):=\{\mathbf v_k^\top \mathbf G_0 \mathbf v_k+\sum_{i\in[m]} g_i \mathbf v_{k-\lceil g_i\rceil}^\top \mathbf G_i \mathbf v_{k-\lceil g_i\rceil} \,:\, \mathbf G_i\succ 0, \quad i\in\{0\}\cup[m]\}$.

\begin{theorem}\label{theo:suff.cond.CTP}
The following statements hold:
\begin{enumerate}
    \item If one the following equivalent conditions hold for all $k\in\N^{\ge k_{\min}}$:
    \begin{eqnarray}
        \nonumber
        \R^{>0}\subseteq Q_k^\circ(g)+I_k(h)  & \Leftrightarrow &  \forall \delta>0\,, \  \delta\in Q_k^\circ(g)+I_k(h)\\
        \label{eq:suffi.con.ideal}
        &\Leftrightarrow &   1\in Q_k^\circ(g)+I_k(h)\,,
    \end{eqnarray}
    then POP \eqref{eq:POP.def} has CTP, as in Definition \ref{def:ctp}.
    \item Assume that $h=\emptyset$ and $S(g)$ has nonempty interior. Then POP \eqref{eq:POP.def} has CTP if and only if 
    \begin{equation}\label{eq:pos.real.inter.quad}
        \R^{>0}\subseteq Q_k^\circ(g)\,,\, \forall k\in\N^{\ge k_{\min}}\,.
    \end{equation}
\end{enumerate}
\end{theorem}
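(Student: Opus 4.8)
The plan is to reduce everything to one bookkeeping identity linking a block-diagonal trace against a weight matrix to the Riesz functional evaluated on an element of the truncated quadratic module. First I would dispose of the equivalences in \eqref{eq:suffi.con.ideal}: scaling all the Gram matrices of a representation by a fixed $\delta>0$ keeps them positive definite and multiplies the represented polynomial by $\delta$, so $Q_k^\circ(g)$ is invariant under multiplication by positive reals; $I_k(h)$ is a linear subspace, hence also invariant; therefore $\delta\cdot(Q_k^\circ(g)+I_k(h))=Q_k^\circ(g)+I_k(h)$ for every $\delta>0$, which makes the three displayed conditions coincide. The central identity is that for any block-diagonal symmetric $\mathbf W=\diag(\mathbf W_0,\dots,\mathbf W_m)\in\mathcal S_k$, summing $\langle\mathbf W_0,\mathbf M_k(\mathbf y)\rangle=L_{\mathbf y}(\mathbf v_k^\top\mathbf W_0\mathbf v_k)$ and $\langle\mathbf W_i,\mathbf M_{k-\lceil g_i\rceil}(g_i\mathbf y)\rangle=L_{\mathbf y}(g_i\,\mathbf v_{k-\lceil g_i\rceil}^\top\mathbf W_i\mathbf v_{k-\lceil g_i\rceil})$ over blocks yields
\[
\langle\mathbf W,\mathbf D_k(\mathbf y)\rangle=L_{\mathbf y}(\sigma_{\mathbf W})\,,\qquad \sigma_{\mathbf W}:=\mathbf v_k^\top\mathbf W_0\mathbf v_k+\sum_{i\in[m]} g_i\,\mathbf v_{k-\lceil g_i\rceil}^\top\mathbf W_i\mathbf v_{k-\lceil g_i\rceil}\,,
\]
a polynomial with $\deg\sigma_{\mathbf W}\le 2k$ lying in $Q_k(g)$, and lying in $Q_k^\circ(g)$ precisely when all $\mathbf W_i\succ 0$. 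I would also record that the equalities $\mathbf M_{k-\lceil h_j\rceil}(h_j\mathbf y)=0$, $j\in[l]$, in \eqref{eq:dual.diag.moment.mat} jointly say exactly that $L_{\mathbf y}$ vanishes on $I_k(h)$, since $\{\alpha+\beta:\alpha,\beta\in\N^n_{k-\lceil h_j\rceil}\}=\N^n_{2(k-\lceil h_j\rceil)}$.

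For the first statement, assume $1\in Q_k^\circ(g)+I_k(h)$ and fix $k$; pick a representation $1=\sigma_{\mathbf W}+q$ with $\mathbf W=\diag(\mathbf W_0,\dots,\mathbf W_m)$, all $\mathbf W_i\succ 0$, and $q\in I_k(h)$. Then for every $\mathbf y\in\R^{\s(2k)}$ with $y_{\mathbf 0}=1$ and $\mathbf M_{k-\lceil h_j\rceil}(h_j\mathbf y)=0$,
\[
\langle\mathbf W,\mathbf D_k(\mathbf y)\rangle=L_{\mathbf y}(\sigma_{\mathbf W})=L_{\mathbf y}(1)-L_{\mathbf y}(q)=y_{\mathbf 0}-0=1\,.
\]
As $\mathbf W$ is a positive definite element of $\mathcal S_k$, its square root $\mathbf P_k:=\mathbf W^{1/2}$ is again a positive definite element of $\mathcal S_k$ (same block sizes), and $\trace(\mathbf P_k\mathbf D_k(\mathbf y)\mathbf P_k)=\trace(\mathbf W\mathbf D_k(\mathbf y))=\langle\mathbf W,\mathbf D_k(\mathbf y)\rangle=1$; hence POP \eqref{eq:POP.def} has CTP in the sense of Definition~\ref{def:ctp}, with $a_k=1$.

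For the second statement, the ``if'' direction is the special case $h=\emptyset$ (so $I_k(h)=\{0\}$) of the first statement. For ``only if'', suppose POP \eqref{eq:POP.def} has CTP and fix $k$; the basis transformation matrix $\mathbf P_k$ is a positive definite element of $\mathcal S_k$, hence block-diagonal with positive definite blocks, so $\mathbf W:=\mathbf P_k^2\in\mathcal S_k$ is likewise block-diagonal with positive definite blocks. By the central identity, $\sigma_{\mathbf W}\in Q_k^\circ(g)$, and by Definition~\ref{def:ctp} (with no $h$-constraints) $L_{\mathbf y}(\sigma_{\mathbf W})=\langle\mathbf W,\mathbf D_k(\mathbf y)\rangle=\trace(\mathbf P_k\mathbf D_k(\mathbf y)\mathbf P_k)=a_k$ for every $\mathbf y\in\R^{\s(2k)}$ with $y_{\mathbf 0}=1$. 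Since $L_{\mathbf y}(\sigma_{\mathbf W})=\sum_{\alpha\in\N^n_{2k}}(\sigma_{\mathbf W})_\alpha\,y_\alpha$ and the coordinates $(y_\alpha)_{\alpha\ne\mathbf 0}$ are free, constancy forces $(\sigma_{\mathbf W})_\alpha=0$ for all $\alpha\ne\mathbf 0$ and $(\sigma_{\mathbf W})_{\mathbf 0}=a_k$, i.e. $\sigma_{\mathbf W}\equiv a_k$. Therefore $a_k\in Q_k^\circ(g)$, and the positive-scaling invariance from the first paragraph gives $\R^{>0}\subseteq Q_k^\circ(g)$ for every $k$.

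I expect the difficulty to lie not in any single step but in getting the bookkeeping exactly right: checking $\deg\sigma_{\mathbf W}\le 2k$ so $L_{\mathbf y}$ is well defined on it using only $\mathbf y\in\R^{\s(2k)}$; making sure the block sizes of $\mathbf W$ match those prescribed for $\mathcal S_k$ so that $\mathbf W^{1/2}\in\mathcal S_k$; applying the moment/localizing-matrix identity on each block with the correct order, and correctly identifying the localizing equalities with vanishing of $L_{\mathbf y}$ on $I_k(h)$; and, in the second statement, genuinely using $h=\emptyset$ so that feasible $\mathbf y$ sweep out all of $\{y_{\mathbf 0}=1\}$, which is exactly what forces $\sigma_{\mathbf W}$ to be a true constant rather than constant only modulo $I_k(h)$.
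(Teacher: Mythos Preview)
Your proof is correct. Part 1 follows essentially the same route as the paper: pick a representation of a positive constant in $Q_k^\circ(g)+I_k(h)$, take the block-diagonal square root of the Gram matrices as $\mathbf P_k$, and apply the Riesz functional to both sides, using that $L_{\mathbf y}$ kills $I_k(h)$ under the localizing equalities.

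The ``only if'' direction of Part 2 is where you diverge from the paper, and your route is in fact cleaner. The paper takes $\mathbf a\in S(g)$, lets $\mathbf y$ be the moment sequence of the Dirac measure $\delta_{\mathbf a}$, and computes $\trace(\mathbf P_k\mathbf D_k(\mathbf y)\mathbf P_k)=w(\mathbf a)$ where $w=\sigma_{\mathbf P_k^2}$; this shows $w-a_k$ vanishes on $S(g)$, and the nonempty-interior hypothesis is then invoked to conclude $w\equiv a_k$ as a polynomial. You instead exploit directly that Definition~\ref{def:ctp} quantifies over \emph{all} $\mathbf y\in\R^{\s(2k)}$ with $y_{\mathbf 0}=1$ (not merely moment sequences), and since $h=\emptyset$ the non-constant coordinates of $\mathbf y$ are unconstrained; constancy of the linear form $L_{\mathbf y}(\sigma_{\mathbf W})$ then forces $\sigma_{\mathbf W}\equiv a_k$ coefficient-by-coefficient. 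Your argument never uses that $S(g)$ has nonempty interior, so it actually establishes a slightly stronger statement than the paper claims: when $h=\emptyset$, CTP is equivalent to \eqref{eq:pos.real.inter.quad} without any topological assumption on $S(g)$. The paper's Dirac-measure argument is perhaps more geometric in flavor, but yours is more direct and reveals that the interior hypothesis is superfluous for this equivalence.
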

The proof of Theorem \ref{theo:suff.cond.CTP} is postponed to Appendix \ref{proof:suff.cond.CTP}.


The following lemma will be used later on.
\begin{lemma}\label{lem:equality} 
Let $R>0$. For all $k\in\N^{\ge 1}$, one has
\begin{equation}
    (R+1)^k=(1+\|\mathbf x\|^2_2)^k+(R-\|\mathbf x\|^2_2)\sum_{j=0}^{k-1}(R+1)^j(1+\|\mathbf x\|^2_2)^{k-j-1}\,.
\end{equation}
\end{lemma}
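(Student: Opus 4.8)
The plan is to reduce the identity to the elementary factorization of $a^k-b^k$. Set $a:=R+1$ and $b:=1+\|\mathbf x\|_2^2$, viewing both as elements of $\R[\mathbf x]$ (or as real numbers for fixed $\mathbf x$). The key observation is that the polynomial appearing in front of the sum is exactly the difference of these two quantities:
\[
R-\|\mathbf x\|_2^2=(R+1)-(1+\|\mathbf x\|_2^2)=a-b.
\]
With this substitution the claimed equality becomes
\[
a^k=b^k+(a-b)\sum_{j=0}^{k-1}a^j b^{\,k-j-1},
\]
i.e.\ the standard telescoping identity $a^k-b^k=(a-b)\sum_{j=0}^{k-1}a^j b^{\,k-1-j}$.

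To make the argument self-contained I would establish this last identity by a one-line induction on $k$ (the base case $k=1$ is $a-b=a-b$; for the inductive step write $a^{k+1}-b^{k+1}=a(a^k-b^k)+(a-b)b^k$ and apply the induction hypothesis), or equivalently expand $(a-b)\sum_{j=0}^{k-1}a^jb^{k-1-j}$ directly and observe that consecutive terms cancel, leaving $a^k-b^k$. Either way, substituting back $a=R+1$ and $b=1+\|\mathbf x\|_2^2$ yields the statement of the lemma, valid for every $k\in\N^{\ge 1}$ and every $R>0$ (indeed every $R$).

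There is essentially no obstacle here: the only point requiring a moment's care is the bookkeeping of the summation index, making sure the exponent pattern $a^j b^{k-j-1}$ matches the one in $a^k-b^k=(a-b)\sum_{j=0}^{k-1}a^jb^{k-1-j}$, which it does. The lemma is then immediate.
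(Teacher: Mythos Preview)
Your proposal is correct and matches the paper's own proof essentially verbatim: the paper also sets $a=R+1$, $b=1+\|\mathbf x\|_2^2$ and invokes the factorization $a^k-b^k=(a-b)\sum_{j=0}^{k-1}a^jb^{k-1-j}$. Your additional remarks on verifying this identity by induction or telescoping are a bit more detail than the paper provides, but the approach is identical.
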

\begin{proof}
Let $k\in\N^{\ge 1}$. 
Letting $a=R+1$ and $b=1+\|\mathbf x\|^2_2$, the desired equality follows from $a^k-b^k=(a-b)\sum_{j=0}^{k-1}a^jb^{k-1-j}$.
\end{proof}

The next result states that the sufficient condition in Theorem \ref{theo:suff.cond.CTP} holds whenever a ball constraint is present in the POP's description.
For a real symmetric matrix $\mathbf A$, denote the largest eigenvalue of $\mathbf A$ by $\lambda_{\max}(\mathbf A)$.
\begin{theorem}\label{theo:generic.ctp}
If $R-\|\mathbf x\|_2^2\in g$ for some $R>0$ then the inclusions \eqref{eq:pos.real.inter.quad} hold and therefore POP \eqref{eq:POP.def} has CTP.
\end{theorem}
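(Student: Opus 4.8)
The plan is to establish the inclusion \eqref{eq:pos.real.inter.quad}, i.e. $\R^{>0}\subseteq Q_k^\circ(g)$ for every $k\ge k_{\min}$, and then invoke Theorem \ref{theo:suff.cond.CTP}: since $0\in I_k(h)$ we have $Q_k^\circ(g)\subseteq Q_k^\circ(g)+I_k(h)$, so \eqref{eq:pos.real.inter.quad} implies the hypothesis of Theorem \ref{theo:suff.cond.CTP}(1) and hence CTP for POP \eqref{eq:POP.def}. Because $Q_k^\circ(g)$ is invariant under multiplication by positive reals (scale every Gram matrix by $\delta>0$), it suffices to exhibit, for each $k\ge k_{\min}$, a representation
\begin{equation*}
1 = \mathbf v_k^\top \mathbf H_0 \mathbf v_k + \sum_{i\in[m]} g_i\, \mathbf v_{k-\lceil g_i\rceil}^\top \mathbf H_i \mathbf v_{k-\lceil g_i\rceil}, \qquad \mathbf H_i\succ 0\ \ (i\in\{0\}\cup[m]).
\end{equation*}

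Relabel the constraints so that $g_1 = R-\|\mathbf x\|_2^2$, whence $\lceil g_1\rceil = 1$ and $k-1\ge 0$. Dividing the identity of Lemma \ref{lem:equality} by $(R+1)^k$ gives
\begin{equation*}
1 = \frac{(1+\|\mathbf x\|_2^2)^k}{(R+1)^k} \;+\; g_1\cdot\frac{1}{(R+1)^k}\sum_{j=0}^{k-1}(R+1)^j(1+\|\mathbf x\|_2^2)^{k-1-j}.
\end{equation*}
Both $(1+\|\mathbf x\|_2^2)^k$ and $\sum_{j=0}^{k-1}(R+1)^j(1+\|\mathbf x\|_2^2)^{k-1-j}$ are polynomials in $x_1^2,\dots,x_n^2$ only, and by the multinomial theorem every monomial $\mathbf x^{2\alpha}$ with $|\alpha|\le k$ (resp. $|\alpha|\le k-1$, the $j=0$ summand already contributing all of them with a positive coefficient) occurs with a strictly positive coefficient, and no other monomials occur. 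Hence the first summand equals $\mathbf v_k^\top \mathbf G_0 \mathbf v_k$ and the coefficient of $g_1$ equals $\mathbf v_{k-1}^\top \mathbf G_1 \mathbf v_{k-1}$ for suitable \emph{diagonal} positive definite matrices $\mathbf G_0\in\mathcal S^{\s(k)}$ and $\mathbf G_1\in\mathcal S^{\s(k-1)}$. This already settles the case $m=1$.

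For $m\ge 2$ I would incorporate the remaining constraints by a small perturbation that is absorbed into the moment block. Put $\sigma_i:=\mathbf v_{k-\lceil g_i\rceil}^\top \mathbf v_{k-\lceil g_i\rceil}$ for $2\le i\le m$ and $r:=\sum_{i=2}^m g_i\sigma_i\in\R[\mathbf x]_{2k}$. Since $\R[\mathbf x]_{2k}=\Span\{\mathbf x^{\alpha+\beta}:\alpha,\beta\in\N^n_k\}$, choose a (possibly indefinite) symmetric matrix $\mathbf E$ with $\mathbf v_k^\top \mathbf E\mathbf v_k = r$. Then for $\varepsilon>0$ small enough that $\mathbf G_0-\varepsilon\mathbf E\succ 0$ — possible because $\mathbf G_0\succ 0$ — one has
\begin{equation*}
1 = \mathbf v_k^\top(\mathbf G_0-\varepsilon\mathbf E)\mathbf v_k + g_1\,\mathbf v_{k-1}^\top \mathbf G_1\mathbf v_{k-1} + \sum_{i=2}^m g_i\,\mathbf v_{k-\lceil g_i\rceil}^\top(\varepsilon\mathbf I)\mathbf v_{k-\lceil g_i\rceil},
\end{equation*}
which is exactly the required membership $1\in Q_k^\circ(g)$ with all $m+1$ Gram matrices positive definite. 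Together with the scaling remark this proves \eqref{eq:pos.real.inter.quad}, and Theorem \ref{theo:suff.cond.CTP} then yields CTP.

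The steps in the first two paragraphs are essentially mechanical (Lemma \ref{lem:equality} plus multinomial positivity). I expect the only delicate point to be the last step: one must keep all $m+1$ Gram matrices strictly — not merely semi- — positive definite simultaneously after inserting the extra $g_i$-terms. This is why it matters that the Gram matrix $\mathbf G_0$ coming from Lemma \ref{lem:equality} is strictly positive definite (so it can absorb the perturbation $-\varepsilon\mathbf E$) and that $r=\sum_{i\ge 2}g_i\sigma_i$ has degree at most $2k$ (so it is representable as $\mathbf v_k^\top \mathbf E\mathbf v_k$ and can be shifted entirely into the order-$k$ moment block rather than into the localizing block of $g_1$, which only produces $g_1$ times an SOS).
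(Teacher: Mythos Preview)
Your proof is correct and follows essentially the same approach as the paper: both use Lemma \ref{lem:equality} to obtain positive definite diagonal Gram matrices for the moment block and the ball-constraint localizing block, then insert the remaining $g_i$ with identity Gram matrices scaled by a small $\varepsilon>0$ (the paper's $\delta$) and absorb the compensating polynomial $-\varepsilon\sum_{i}g_i\sigma_i=-\varepsilon\,\mathbf v_k^\top\mathbf E\mathbf v_k$ (the paper's $-\delta\,\mathbf v_k^\top\mathbf W\mathbf v_k$) into the strictly positive definite moment Gram matrix. The only cosmetic difference is that you normalize to show $1\in Q_k^\circ(g)$ whereas the paper shows $(R+1)^k\in Q_k^\circ(g)$ directly; as you note, these are equivalent by positive scaling.
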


\begin{proof}
Without loss of generality, set $g_m:=R-\|\mathbf x\|_2^2$ and let $k\in\N^{\ge k_{\min}}$ be fixed. 
By Lemma \ref{lem:equality}, $(R+1)^k=\Theta+g_m\Lambda$,
where $\Theta:=(1+\|\mathbf x\|^2_2)^k$ and $\Lambda:=\sum_{j=0}^{k-1}(R+1)^j(1+\|\mathbf x\|^2_2)^{k-j-1}$.
Note that:
\begin{itemize}
    \item $\Theta=\sum_{\alpha\in\N^n_{k}}\theta_{\alpha}\mathbf x^{2\alpha}=\mathbf v_k^\top \mathbf G_0\mathbf v_k$ for some $(\theta_{\alpha})_{\alpha\in\N^n_{k}} \subseteq \R^{>0}$;
    \item $\Lambda=\sum_{\alpha\in\N^n_{k-1}}\lambda_{\alpha}\mathbf x^{2\alpha}=\mathbf v_{k-1}^\top \mathbf G_m\mathbf v_{k-1}$ for some $(\lambda_{\alpha})_{\alpha\in\N^n_{k-1}} \subseteq \R^{>0}$.
\end{itemize}
Here $\mathbf G_0=\diag((\theta_{\alpha})_{\alpha\in\N^n_{k}})$ and $\mathbf G_m=\diag((\lambda_{\alpha})_{\alpha\in\N^n_{k}})$ are both positive definite.
Then we have
$(R+1)^k=\mathbf v_k^\top \mathbf G_0\mathbf v_k+g_m\mathbf v_{k-1}^\top \mathbf G_m\mathbf v_{k-1}$.
Denote by $\mathbf I_t$ the identity matrix of size $s(t)$ for $t\in\N$.

Let $\mathbf W$ be a real symmetric matrix such that
$\sum_{i\in[m-1]} g_i\mathbf v_{k-\lceil g_i \rceil}^\top\mathbf I_{k-\lceil g_i \rceil} \mathbf v_{k-\lceil g_i \rceil} = \mathbf v_k^\top \mathbf W\mathbf v_k$.
Since $\mathbf G_0\succ 0$, there exists $\delta>0$ such that $\mathbf G_0-\delta \mathbf W\succ 0$.
Indeed, 
\begin{equation}
    \mathbf G_0-\delta \mathbf W\succ 0\Leftrightarrow \mathbf I_k\succ\delta \mathbf G_0^{-1/2} \mathbf W \mathbf G_0^{-1/2}  \Leftrightarrow 1> \delta \lambda_{\max}(\mathbf G_0^{-1/2} \mathbf W \mathbf G_0^{-1/2})\,, 
\end{equation}
yielding the selection $\delta=1/(|\lambda_{\max}(\mathbf G_0^{-1/2} \mathbf W \mathbf G_0^{-1/2})|+1)$. Then
\begin{equation*}
   (R+1)^k=\mathbf v_k^\top (\mathbf G_0-\delta \mathbf W)\mathbf v_k+\delta \sum_{i\in[m-1]} g_i\mathbf v_{k-\lceil g_i \rceil}^\top\mathbf I_{k-\lceil g_i \rceil} \mathbf v_{k-\lceil g_i \rceil}+g_m\mathbf v_{k-1}^\top \mathbf G_m\mathbf v_{k-1}\,,
\end{equation*}
which implies $(R+1)^k\in Q^\circ_k(g)$, which in turn yields the desired conclusion.
\end{proof}
 
The next result is a consequence of Theorem \ref{theo:generic.ctp}. Its states 
that if a POP has a ball constraint then the corresponding Moment-SOS relaxations satisfy Slater's condition.
\begin{corollary}\label{coro:stric.fea.sol.sos}
Assume that $R-\|\mathbf x\|_2^2\in g$ for some $R>0$.  
Then Slater's condition holds for SDP \eqref{eq:sos.hierarchy} for all $k\ge k_{\min}$. As a consequence, strong duality holds for the primal-dual \eqref{eq:sos.hierarchy}-\eqref{eq:moment.hierarchy} for all $k\ge k_{\min}$.
\end{corollary}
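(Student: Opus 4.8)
The plan is to verify Slater's condition for \eqref{eq:sos.hierarchy} directly and then read off strong duality from a standard conic-duality argument. Concretely, a Slater point of \eqref{eq:sos.hierarchy} is a scalar $\xi\in\R$ together with \emph{positive definite} Gram matrices $\mathbf G_0,\dots,\mathbf G_m$ (and vectors $\mathbf u_j$, which we may take to be $0$) realizing the polynomial identity of \eqref{eq:sos.hierarchy}; equivalently, some $\xi$ with $f-\xi\in Q_k^\circ(g)$. Since POP \eqref{eq:POP.def} has a global minimizer, the order-$2k$ moment sequence of the Dirac measure at that point is feasible for \eqref{eq:moment.hierarchy}, and $\rho_k$ is finite because $\rho_k\le f^\star$ (every element of $Q_k(g)+I_k(h)$ is nonnegative on $S(g)\cap V(h)$); hence once \eqref{eq:sos.hierarchy} is strictly feasible, standard conic duality yields $\rho_k=\tau_k$ with the infimum in \eqref{eq:moment.hierarchy} attained. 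So the whole content is the construction of the Slater point.

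First I would reduce to finding a single $\xi_1\in\R$ with $f-\xi_1\in Q_k(g)$. By Theorem \ref{theo:generic.ctp} the inclusion \eqref{eq:pos.real.inter.quad} holds, so every positive constant lies in $Q_k^\circ(g)$. Moreover $Q_k(g)$ is a convex cone and $Q_k^\circ(g)$ is exactly the subset of its representations whose Gram matrices are positive definite, so adding a positive semidefinite and a positive definite matrix blockwise gives $Q_k(g)+Q_k^\circ(g)\subseteq Q_k^\circ(g)$. Therefore, for any $\xi<\xi_1$, $f-\xi=(f-\xi_1)+(\xi_1-\xi)\in Q_k(g)+Q_k^\circ(g)\subseteq Q_k^\circ(g)$, which furnishes the required Slater point (all $\mathbf G_i\succ 0$, $\mathbf u_j=0$).

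The main step is to produce $\xi_1$; after renaming the inequality constraints assume $g_m=R-\|\mathbf x\|_2^2$. The key observation is that, inside the strict interior of $\Sigma[\mathbf x]_k$, the objective is dominated by a large multiple of $(1+\|\mathbf x\|_2^2)^k$: since $(1+\|\mathbf x\|_2^2)^k=\mathbf v_k^\top\Theta\,\mathbf v_k$ for a positive definite diagonal matrix $\Theta$, and the linear map $\mathbf G\mapsto\mathbf v_k^\top\mathbf G\,\mathbf v_k$ from symmetric matrices of size $\s(k)$ onto $\R[\mathbf x]_{2k}$ has a continuous linear right inverse sending $(1+\|\mathbf x\|_2^2)^k$ to $\Theta$, and $\frac1c f+(1+\|\mathbf x\|_2^2)^k\to(1+\|\mathbf x\|_2^2)^k$ as $c\to\infty$ (here $\deg f\le 2k$ since $k\ge k_{\min}\ge\lceil f\rceil$), for all large $c>0$ one obtains $f+c(1+\|\mathbf x\|_2^2)^k=\mathbf v_k^\top\mathbf G_0\,\mathbf v_k$ with $\mathbf G_0\succ0$. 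Then I would invoke Lemma \ref{lem:equality}, which rewrites $(1+\|\mathbf x\|_2^2)^k=(R+1)^k-g_m\Lambda$ with $\Lambda=\sum_{j=0}^{k-1}(R+1)^j(1+\|\mathbf x\|_2^2)^{k-1-j}$; as in the proof of Theorem \ref{theo:generic.ctp}, $\Lambda=\mathbf v_{k-1}^\top\Lambda_\flat\,\mathbf v_{k-1}$ for a positive definite diagonal matrix $\Lambda_\flat$. Substituting gives $f+c(R+1)^k=\mathbf v_k^\top\mathbf G_0\,\mathbf v_k+g_m\,\mathbf v_{k-1}^\top(c\Lambda_\flat)\mathbf v_{k-1}\in Q_k(g)$, so $\xi_1:=-c(R+1)^k$ works; combined with the reduction above this completes the proof.

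I expect the only genuinely non-routine point to be this last domination fact — that an arbitrary polynomial of degree at most $2k$ becomes an element of the strict interior of $\Sigma[\mathbf x]_k$ after adding a sufficiently large multiple of $(1+\|\mathbf x\|_2^2)^k$. Everything else is the algebraic identity of Lemma \ref{lem:equality}, elementary convex-cone bookkeeping built on Theorem \ref{theo:generic.ctp}, and the invocation of strong conic duality.
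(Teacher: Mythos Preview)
Your proof is correct and follows the same overall architecture as the paper's: first exhibit some $\xi_1$ with $f-\xi_1\in Q_k(g)$, then use Theorem \ref{theo:generic.ctp} to add a positive constant from $Q_k^\circ(g)$ and land in the interior. The difference lies only in how the first step is obtained. The paper simply invokes \cite[Proposition 5.8]{marshall}, which directly yields $\sigma_0\in\Sigma[\mathbf x]_k$, $\sigma\in\Sigma[\mathbf x]_{k-1}$ and $\lambda\in\R$ with $f+\lambda=\sigma_0+(R-\|\mathbf x\|_2^2)\sigma$. You instead give a self-contained construction: use openness of the positive-definite cone (via a linear right inverse of $\mathbf G\mapsto\mathbf v_k^\top\mathbf G\,\mathbf v_k$) to dominate $f$ by a large multiple of $(1+\|\mathbf x\|_2^2)^k$ with a strictly positive Gram matrix, and then apply Lemma \ref{lem:equality} to trade that multiple for a constant plus a $g_m$-weighted SOS. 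Your route avoids the external citation and makes the degree bookkeeping explicit, at the cost of a slightly longer argument; the paper's route is terser but relies on Marshall's result as a black box. The strong-duality conclusion and the observation that $\rho_k\le f^\star<\infty$ (so Slater gives attainment on the moment side) are handled the same way in both.
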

\begin{proof}
It suffices to prove that SDP \eqref{eq:sos.hierarchy} has a strictly feasible solution for all $k\ge k_{\min}$. 
Let $k\ge k_{\min}$ be fixed. By \cite[Proposition 5.8]{marshall}, there exist $\sigma_0\in\Sigma[\x]_k$, $\sigma\in\Sigma[\x]_{k-1}$ and $\lambda\in\R$ such that $f+\lambda=\sigma_0+(R-\|\x\|_2^2)\sigma$.
Thus $f+\lambda\in Q_k(g)$.
By Theorem \ref{theo:generic.ctp}, $1\in Q_k^\circ(g)$ and therefore
$f+1+\lambda\in Q^\circ_k(g)$, which yields the desired conclusion.
\end{proof}

\begin{remark}\label{rem:direct.cons.trace}
From the proofs of Theorem \ref{theo:generic.ctp} and Theorem \ref{theo:suff.cond.CTP}, the constant trace $a_k$ and the basis transformation matrix $\mathbf P_k$ (Definition \ref{def:ctp}) can be taken as
\begin{equation*}
    a_k=(R+1)^k\quad\text{and}\quad \mathbf P_k=\diag((\mathbf G_0-\delta \mathbf W)^{1/2},\sqrt{\delta}\mathbf I_{k-\lceil g_1 \rceil},\dots,\sqrt{\delta}\mathbf I_{k-\lceil g_{m-1} \rceil},\mathbf G_m^{1/2})\,.
\end{equation*}
However, this choice has poor numerical properties. 
In the next section we provide a series of linear programs (LPs) inspired from the inclusions \eqref{eq:suffi.con.ideal}, to obtain the constant trace $a_k$ and the basis transformation matrix $\mathbf P_k$ which achieve a better numerical performance.
\end{remark}

\subsection{Verifying CTP for POPs by solving linear programs}
\label{sec:obtain.CTP.SDP}
For any $k\in\N^{\ge k_{\min}}$, let $\hat{\mathcal S}_{k}$ be the set of real diagonal matrices of size $\s(k)$ and consider the following LP:
\begin{equation}\label{eq:find.CTP.diag}
\inf \limits_{\xi,\mathbf G_i,\mathbf u_j} \left\{\xi\ \left|\begin{array}{rl}
&\mathbf G_0-\mathbf I_0 \in \hat{\mathcal S}_{k}^+\,,\,\mathbf G_i-\mathbf I_i \in \hat{\mathcal S}_{k-\lceil g_i\rceil}^+\,,\,i\in[m]\,,\\
&\xi=\mathbf v_k^\top \mathbf G_0 \mathbf v_k+\sum_{i\in[m]} g_i \mathbf v_{k-\lceil g_i\rceil}^\top \mathbf G_i \mathbf v_{k-\lceil g_i\rceil}\\
&\qquad\qquad\qquad\qquad+ \sum_{j\in[l]} h_j \mathbf v_{2(k-\lceil h_j\rceil)}^\top \mathbf u_j
\end{array}
\right.\right\}\,,
\end{equation}
where $\mathbf I_i$ is the identity matrix for $i\in\{0\}\cup [m]$.
\begin{lemma}\label{lem:feas.LP}
If LP \eqref{eq:find.CTP.diag} has a feasible solution $(\xi_k,\mathbf G_{i,k},\mathbf u_{j,k})$ for every $k\in\N^{\ge k_{\min}}$, then POP \eqref{eq:POP.def} has CTP with $a_k=\xi_k$ and $\mathbf P_k=\diag(\mathbf G_{0,k}^{1/2},\dots,\mathbf G_{m,k}^{1/2})$.
\end{lemma}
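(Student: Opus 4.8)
The plan is to read the required trace identity directly off the polynomial equality enforced by LP \eqref{eq:find.CTP.diag}, using the Riesz functional as the bridge. Fix $k\in\N^{\ge k_{\min}}$ together with a feasible solution $(\xi_k,\mathbf G_{i,k},\mathbf u_{j,k})$. The constraints $\mathbf G_{0,k}-\mathbf I_0\in\hat{\mathcal S}_k^+$ and $\mathbf G_{i,k}-\mathbf I_i\in\hat{\mathcal S}_{k-\lceil g_i\rceil}^+$ say exactly that each $\mathbf G_{i,k}$ is a diagonal matrix whose diagonal entries are all $\ge 1$; in particular every $\mathbf G_{i,k}$ is positive definite, so $\mathbf P_k:=\diag(\mathbf G_{0,k}^{1/2},\dots,\mathbf G_{m,k}^{1/2})$ is a well-defined positive definite matrix in $\mathcal S_k$ with $\mathbf P_k^2=\diag(\mathbf G_{0,k},\dots,\mathbf G_{m,k})$. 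I would first record that $\xi_k>0$, so that it is a legitimate candidate for the constant trace $a_k$: the equality constraint of the LP is an identity of polynomials in $\mathbf x$, and evaluating it at any $\mathbf x^\ast\in S(g)\cap V(h)$ (nonempty because, by assumption, the POP has a global minimizer), where the $h_j$ terms drop since $h_j(\mathbf x^\ast)=0$, gives $\xi_k=\mathbf v_k(\mathbf x^\ast)^\top\mathbf G_{0,k}\mathbf v_k(\mathbf x^\ast)+\sum_{i\in[m]}g_i(\mathbf x^\ast)\,\mathbf v_{k-\lceil g_i\rceil}(\mathbf x^\ast)^\top\mathbf G_{i,k}\mathbf v_{k-\lceil g_i\rceil}(\mathbf x^\ast)\ge\|\mathbf v_k(\mathbf x^\ast)\|_2^2\ge 1$, using $\mathbf G_{0,k}\succeq\mathbf I_0$, $g_i(\mathbf x^\ast)\ge 0$, and $\mathbf G_{i,k}\succeq 0$.

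The core of the argument is to apply $L_{\mathbf y}$ to the same polynomial identity. Fix any $\mathbf y\in\R^{\s(2k)}$ with $y_{\mathbf 0}=1$ and $\mathbf M_{k-\lceil h_j\rceil}(h_j\mathbf y)=0$ for all $j\in[l]$, and apply $L_{\mathbf y}$ to both sides of the equality constraint of LP \eqref{eq:find.CTP.diag}. The left-hand side yields $\xi_k y_{\mathbf 0}=\xi_k$. Expanding $\mathbf v_k^\top\mathbf G_{0,k}\mathbf v_k=\sum_{\alpha,\beta}(\mathbf G_{0,k})_{\alpha\beta}\mathbf x^{\alpha+\beta}$ and likewise for the other quadratic forms, the definitions of the moment and localizing matrices give $L_{\mathbf y}\left(\mathbf v_k^\top\mathbf G_{0,k}\mathbf v_k\right)=\left<\mathbf G_{0,k},\mathbf M_k(\mathbf y)\right>$ and $L_{\mathbf y}\left(g_i\,\mathbf v_{k-\lceil g_i\rceil}^\top\mathbf G_{i,k}\mathbf v_{k-\lceil g_i\rceil}\right)=\left<\mathbf G_{i,k},\mathbf M_{k-\lceil g_i\rceil}(g_i\mathbf y)\right>$ for $i\in[m]$. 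For the ideal part, $L_{\mathbf y}\left(h_j\,\mathbf v_{2(k-\lceil h_j\rceil)}^\top\mathbf u_{j,k}\right)$ is a linear combination of the scalars $L_{\mathbf y}(h_j\mathbf x^\gamma)$ with $|\gamma|\le 2(k-\lceil h_j\rceil)$; writing $d_j:=k-\lceil h_j\rceil$, every such $\gamma$ decomposes as $\gamma=\alpha+\beta$ with $\alpha,\beta\in\N^n_{d_j}$ (choose $\alpha\le\gamma$ componentwise with $\max\{0,|\gamma|-d_j\}\le|\alpha|\le\min\{|\gamma|,d_j\}$, possible because $|\gamma|\le 2d_j$), and then $L_{\mathbf y}(h_j\mathbf x^\gamma)$ is precisely the $(\alpha,\beta)$ entry of $\mathbf M_{d_j}(h_j\mathbf y)=0$. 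Hence every ideal term vanishes.

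Combining these computations gives $\xi_k=\left<\mathbf G_{0,k},\mathbf M_k(\mathbf y)\right>+\sum_{i\in[m]}\left<\mathbf G_{i,k},\mathbf M_{k-\lceil g_i\rceil}(g_i\mathbf y)\right>=\left<\mathbf P_k^2,\mathbf D_k(\mathbf y)\right>=\trace(\mathbf P_k\mathbf D_k(\mathbf y)\mathbf P_k)$, where the last step uses that $\mathbf P_k$ is symmetric and the cyclicity of the trace. Since $\mathbf y$ was an arbitrary sequence with $y_{\mathbf 0}=1$ and $\mathbf M_{k-\lceil h_j\rceil}(h_j\mathbf y)=0$ for all $j$, this is exactly the implication in Definition \ref{def:ctp} with $a_k=\xi_k$ and the claimed $\mathbf P_k$; as $k\in\N^{\ge k_{\min}}$ was arbitrary, POP \eqref{eq:POP.def} has CTP.

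Almost all of this is routine bookkeeping with $L_{\mathbf y}$, the moment matrix, and the localizing matrix. The one place I expect to need care is the vanishing of the ideal contribution, namely the elementary but essential observation that any exponent $\gamma$ of degree at most $2d_j$ splits as a sum of two exponents of degree at most $d_j$, which is what lets us identify $L_{\mathbf y}(h_j\mathbf x^\gamma)$ with an entry of the zero localizing matrix $\mathbf M_{d_j}(h_j\mathbf y)$; everything else should go through without obstruction.
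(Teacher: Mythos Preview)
Your proof is correct and follows essentially the same approach as the paper's, which simply refers back to the proof of Theorem \ref{theo:suff.cond.CTP}: apply $L_{\mathbf y}$ to the polynomial identity from LP \eqref{eq:find.CTP.diag}, use the definitions of the moment and localizing matrices to turn each term into a trace, and observe that the ideal contribution vanishes. Your write-up is in fact more complete than the paper's, as you explicitly verify $\xi_k>0$ (needed for Definition \ref{def:ctp} but not addressed in the paper's sketch) and spell out the exponent-splitting argument that identifies $L_{\mathbf y}(h_j\mathbf x^\gamma)$ with an entry of $\mathbf M_{k-\lceil h_j\rceil}(h_j\mathbf y)$.
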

The proof of Lemma \ref{lem:feas.LP} is similar to that of Theorem \ref{theo:suff.cond.CTP} with $a_k=\xi_k$ and $\mathbf G_{i}=\mathbf G_{i,k}$, $i\in\{0\}\cup [m]$.

Since small constant traces are highly desirable for efficiency of first-order algorithms (e.g. CGAL), we search for an optimal solution of LP \eqref{eq:find.CTP.diag} instead of just a feasible solution.


\begin{remark}
One can extend the classes of diagonal matrices $\hat{\mathcal S}_{k}$, $\hat{\mathcal S}_{k-\lceil g_i\rceil}$ in \eqref{eq:find.CTP.diag} to obtain a smaller constant trace. For instance, one can define $\hat{\mathcal S}_{k}$, $\hat{\mathcal S}_{k-\lceil g_i\rceil}$ to be the classes of symmetric block diagonal matrices with block size $2$. 
As shown in \cite[Lemma 4.3]{wang2019second}, \eqref{eq:find.CTP.diag} then becomes a second-order cone program (SOCP) which can be also efficiently solved.
\end{remark}

\subsection{Special classes of POPs with CTP}
\label{sec:special.POP.CTP}
In this section we identify  two classes of POPs whose CTP can be verified by LP \eqref{eq:find.CTP.diag}.

\subsubsection{POPs with ball or annulus constraints on subsets of variables}


Consider the following assumption on the inequality constraints of POP \eqref{eq:POP.def}.
\begin{assumption}\label{ass:bound.cons}
There exists a nonnegative integer $r\le m/2$ and
\begin{itemize}
    \item $\overline R_{i}>\underline R_i>0$, $T_i\subseteq [n]$ for $i\in [r]$;
    \item $\overline R_{j}>0$, $T_{j}\subseteq [n]$ for $j\in [m]\backslash[2r]$
\end{itemize}
 such that 
 \begin{enumerate}[(1)]
     \item $(\cup_{i\in[r]} T_i) \cup (\cup_{j\in[m]\backslash[2r]} T_{j})=[n]$;
     \item $g_i:=\|\mathbf x(T_i)\|^2_2-\underline R_i$, $g_{i+r}:=\overline R_{i}-\|\mathbf x(T_i)\|^2_2$ for $i\in [r]$;
     \item $g_{j}:=\overline R_{j}-\|\mathbf x(T_{j})\|^2_2$ for $j\in [m]\backslash[2r]$.
 \end{enumerate}
\end{assumption}
Notice that if Assumption \ref{ass:bound.cons} holds then POP \eqref{eq:POP.def} has $r$ annulus constraints 
and $(m-2r)$ ball constraints on subsets of variables. 
Moreover, $Q(g)+I(h)$ is Archimedean due to (1) in Assumption \ref{ass:bound.cons}. 

\begin{example}
Assumption \ref{ass:bound.cons} holds in the following cases:
\begin{enumerate}[(1)]
    \item $m=1$, $r=0$ and $g_1:=\overline R_1-\|\mathbf x\|_2^2$, i.e., $S(g)$ is a ball;
    \item $m=n$, $r=0$ and $g_i:=\overline R_i-x_i^2$ for $i\in[n]$, i.e., $S(g)$ is a box;
    \item $m=2$, $r=1$ and $g_1:=\|\mathbf x\|_2^2 - \underline R_1$, $g_2:=\overline R_1-\|\mathbf x\|_2^2$ ($\overline R_1>\underline R_1>0$), i.e., $S(g)$ is an annulus.
\end{enumerate}
\end{example}

\begin{proposition}\label{prop:suff.cond.feas}
If Assumption \ref{ass:bound.cons} holds 
then LP \eqref{eq:find.CTP.diag} has a feasible solution for every $k\in\N^{\ge k_{\min}}$,
and therefore POP \eqref{eq:POP.def} has CTP.
\end{proposition}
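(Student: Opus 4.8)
The plan is to produce, for every $k\in\N^{\ge k_{\min}}$, an explicit feasible point of LP~\eqref{eq:find.CTP.diag} and then invoke Lemma~\ref{lem:feas.LP}; the construction rests on a single telescoping identity of the type used to prove Lemma~\ref{lem:equality}, so that essentially no computation beyond expanding a power of a linear form is needed.

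First I would merge all the inequality constraints into one ``ball-type'' combination. Assign the weights $\delta_i:=1$ to the lower annulus constraints ($i\in[r]$), $\delta_{i+r}:=2$ to the upper annulus constraints ($i\in[r]$), and $\delta_j:=1$ to the ball constraints ($j\in[m]\backslash[2r]$), and expand $\sum_{i\in[m]}\delta_i g_i$ using Assumption~\ref{ass:bound.cons}. This yields
\[
\sum_{i\in[m]}\delta_i g_i \;=\; A-B\,,\qquad A:=1+\sum_{i\in[r]}(2\overline R_i-\underline R_i)+\sum_{j\in[m]\backslash[2r]}\overline R_j\,,\qquad B:=1+\sum_{l\in[n]}w_l\,x_l^2\,,
\]
where $w_l$ is the number of sets among $T_1,\dots,T_r,T_{2r+1},\dots,T_m$ that contain the index $l$. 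By item~(1) of Assumption~\ref{ass:bound.cons} one has $w_l\ge 1$ for every $l\in[n]$, and since $\overline R_i>\underline R_i>0$ one has $A>1>0$.

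Next comes the telescoping. From the elementary identity $A^k-B^k=(A-B)\Psi$ with $\Psi:=\sum_{t=0}^{k-1}A^tB^{k-1-t}$ and the previous step, I obtain the polynomial identity
\[
A^k \;=\; B^k+\Big(\sum_{i\in[m]}\delta_i g_i\Big)\Psi \;=\; \mathbf v_k^\top\mathbf G_0\mathbf v_k+\sum_{i\in[m]} g_i\,\mathbf v_{k-1}^\top(\delta_i\mathbf G')\,\mathbf v_{k-1}\,,
\]
valid as a polynomial identity (note $\lceil g_i\rceil=1$ since $\deg g_i=2$); here $\mathbf G_0$ is the diagonal matrix of size $\s(k)$ whose entries are the coefficients of $B^k$ in the monomials $x^{2\alpha}$ ($\alpha\in\N^n_k$), and $\mathbf G'$ the diagonal matrix of size $\s(k-1)$ whose entries are the coefficients of $\Psi$ in the $x^{2\alpha}$ ($\alpha\in\N^n_{k-1}$). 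Because $w_l\ge1$ and $A>0$, a routine expansion of $(1+\sum_l w_l x_l^2)^k$ shows that each such coefficient is at least $1$ (for $\mathbf G_0$ it is a multinomial coefficient times $\prod_l w_l^{\alpha_l}\ge1$; for $\mathbf G'$ the $t=0$ summand $B^{k-1}$ already contributes coefficients $\ge1$ while the remaining summands have nonnegative coefficients). Hence $\mathbf G_0-\mathbf I_0\in\hat{\mathcal S}_k^+$ and, since $\delta_i\ge1$, $(\delta_i\mathbf G')-\mathbf I_i\in\hat{\mathcal S}_{k-1}^+$ for every $i\in[m]$, so that $\xi:=A^k$, $\mathbf G_i:=\delta_i\mathbf G'$ ($i\in[m]$) and $\mathbf u_j:=0$ ($j\in[l]$) form a feasible solution of LP~\eqref{eq:find.CTP.diag}. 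Since this works for every $k$, Lemma~\ref{lem:feas.LP} yields that POP~\eqref{eq:POP.def} has CTP, with $a_k=A^k$ and $\mathbf P_k=\diag(\mathbf G_0^{1/2},\mathbf G_1^{1/2},\dots,\mathbf G_m^{1/2})$.

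The one delicate point — and the reason for the asymmetric weights $\delta_i=1$, $\delta_{i+r}=2$ on each annulus pair — is the presence of the lower annulus constraints $g_i=\|\mathbf x(T_i)\|_2^2-\underline R_i$: LP~\eqref{eq:find.CTP.diag} forces every $\mathbf G_i$ to dominate the identity, so each $g_i$ must appear with a positive-definite diagonal coefficient matrix and cannot be discarded. Weighting the upper constraint by $2$ keeps $A$ positive (one only needs $\delta_{i+r}\overline R_i-\delta_i\underline R_i>0$, which is automatic from $\overline R_i>\underline R_i$) while still leaving the net coefficient $\delta_i-\delta_{i+r}=-1$ of $\|\mathbf x(T_i)\|_2^2$ negative, which is exactly what forces $B$ to have coefficients $\ge1$; with equal weights an index lying only in annulus sets would get $w_l=0$ and the argument would collapse. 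Everything else is bookkeeping.
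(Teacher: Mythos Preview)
Your proof is correct and follows essentially the same route as the paper's: both write $\sum_{i\in[m]}\delta_i g_i = A - B$ with $B=1+\sum_l w_l x_l^2$ and $w_l\ge 1$, apply the telescoping identity $A^k=B^k+(A-B)\Psi$, and read off diagonal Gram matrices with entries $\ge 1$ from the positive coefficients of $B^k$ and $\Psi$. The only difference is cosmetic --- the paper takes $\delta_i=\underline R_i/(\overline R_i-\underline R_i)$, $\delta_{i+r}=\overline R_i/(\overline R_i-\underline R_i)$ (yielding constant term $\underline R_i+\overline R_i$), whereas you take $\delta_i=1$, $\delta_{i+r}=2$ (yielding $2\overline R_i-\underline R_i$); your choice has the small advantage that every $\delta_i\ge 1$, so the LP constraint $\mathbf G_i\succeq\mathbf I_i$ holds without any implicit rescaling.
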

The proof of Proposition \ref{prop:suff.cond.feas} is postponed to Appendix \ref{proof:suff.cond.feas}.

\subsubsection{POPs with inequality constraints of  equivalent degree}
\label{sec:CTP.equidegree}
We say that polynomials $p_1,\dots,p_t$ are of equivalent degree if $\lceil p_1\rceil =\dots=\lceil p_{t}\rceil$. 
\begin{assumption}\label{ass:equidegree}
Let $m\ge 3$ and $\{g_j\}_{j\in[m-2]}$ be of equivalent degree.
$L>0$ and $R>0$ are such that
$g_{m-1}=L-\sum\limits_{j\in[m-2]}g_j$ and $g_m=R-\|\mathbf x\|_2^2$.
\end{assumption}

\begin{proposition}\label{prop:CTP.equidegree}
If Assumption \ref{ass:equidegree} holds then LP \eqref{eq:find.CTP.diag} has a feasible solution for every $k\in\N^{\ge k_{\min}}$, and therefore POP \eqref{eq:POP.def} has CTP.
\end{proposition}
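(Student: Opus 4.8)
The plan is, for each fixed $k\in\N^{\ge k_{\min}}$, to write down an explicit feasible point of LP~\eqref{eq:find.CTP.diag} and then conclude by Lemma~\ref{lem:feas.LP}. Let $d:=\lceil g_1\rceil=\cdots=\lceil g_{m-2}\rceil$ be the common value supplied by Assumption~\ref{ass:equidegree}; note $k\ge k_{\min}\ge d$, that $\lceil g_m\rceil=1$, and that $\lceil g_{m-1}\rceil=d$ since $g_{m-1}=L-\sum_{j\in[m-2]}g_j$ (I return to the degenerate situation where leading terms cancel at the end). The one algebraic fact that makes the equidegree hypothesis work is
\[
\sum_{i\in[m-1]}g_i \;=\; g_{m-1}+\sum_{j\in[m-2]}g_j \;=\; L\,,
\]
a positive constant; so if I give each of $g_1,\dots,g_{m-1}$ the \emph{same} diagonal multiplier $M$ (legitimate precisely because all of them have order $d$, hence the same multiplier size $\s(k-d)$), their joint contribution is exactly $L\cdot M$, with no off-diagonal correction needed --- in contrast with the proof of Theorem~\ref{theo:generic.ctp}.

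Concretely, for $t\in\N$ write $(1+\|\x\|_2^2)^t=\sum_{\alpha\in\N^n_t}\theta^{(t)}_\alpha\,\x^{2\alpha}$ (multinomial coefficients, all $\ge 1$) and set $\Lambda_t:=\sum_{j=0}^{t-1}(R+1)^j(1+\|\x\|_2^2)^{t-1-j}$, all of whose coefficients are $\ge 1$ (the $j=0$ summand already has this property and the others are nonnegative). Then I would take $\xi_k:=(R+1)^k+L\,(R+1)^{k-d}$ and $\mathbf u_j:=0$ for $j\in[l]$, together with: the diagonal matrix $\mathbf G_0$ of size $\s(k)$ with diagonal $(\theta^{(k)}_\alpha)_{\alpha\in\N^n_k}$; the diagonal matrices $\mathbf G_i$ of size $\s(k-d)$ with diagonal $(\theta^{(k-d)}_\alpha)_{\alpha\in\N^n_{k-d}}$ for $i\in[m-1]$; and the diagonal matrix $\mathbf G_m$ of size $\s(k-1)$ whose $(\alpha,\alpha)$ entry is the coefficient of $\x^{2\alpha}$ in $\Lambda_k+L\,\Lambda_{k-d}$, for $\alpha\in\N^n_{k-1}$.

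Two verifications then remain. First, \emph{feasibility of the matrices}: each $\mathbf G_i$, $i\in\{0\}\cup[m]$, is diagonal with entries $\ge 1$, hence $\mathbf G_i-\mathbf I_i$ is diagonal positive semidefinite; here one uses that $\deg\Lambda_k=2(k-1)$ and $\deg(L\Lambda_{k-d})=2(k-d-1)\le 2(k-1)$ so that $\mathbf G_m$ fits in size $\s(k-1)$, that the coefficients of $\Lambda_k$ are $\ge 1$ on the whole range $\lvert\alpha\rvert\le k-1$, and that $k-d\ge 0$. Second, \emph{the polynomial identity of LP~\eqref{eq:find.CTP.diag}}: with these choices $\mathbf v_k^\top\mathbf G_0\mathbf v_k=(1+\|\x\|_2^2)^k$, while $\sum_{i\in[m-1]}g_i\,\mathbf v_{k-d}^\top\mathbf G_i\mathbf v_{k-d}=\big(\sum_{i\in[m-1]}g_i\big)(1+\|\x\|_2^2)^{k-d}=L\,(1+\|\x\|_2^2)^{k-d}$ and $g_m\,\mathbf v_{k-1}^\top\mathbf G_m\mathbf v_{k-1}=(R-\|\x\|_2^2)(\Lambda_k+L\,\Lambda_{k-d})$, and the $h_j$-terms vanish, so the value of the equality constraint of LP~\eqref{eq:find.CTP.diag} at this point is
\begin{align*}
(1+\|\x\|_2^2)^k+L\,(1+\|\x\|_2^2)^{k-d}+(R-\|\x\|_2^2)\big(\Lambda_k+L\,\Lambda_{k-d}\big)
&= (R+1)^k+L\,(R+1)^{k-d} \\
&= \xi_k\,,
\end{align*}
the first equality being Lemma~\ref{lem:equality} applied at exponent $k$ and (after scaling by $L$) at exponent $k-d$. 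Hence $(\xi_k,\mathbf G_i,\mathbf u_j)$ is feasible for LP~\eqref{eq:find.CTP.diag}, and Lemma~\ref{lem:feas.LP} delivers CTP with $a_k=\xi_k$.

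I expect the only genuine obstacle to be the degree bookkeeping for $g_{m-1}$: the construction above relies on $\lceil g_{m-1}\rceil=d$ so that $g_{m-1}$ can share the order-$(k-d)$ multiplier with $g_1,\dots,g_{m-2}$, and this fails only if the degree-$2d$ part of $\sum_{j\in[m-2]}g_j$ cancels, i.e.\ $\lceil g_{m-1}\rceil=d'<d$. That degenerate case needs a small extra argument; it is immediate when $g_{m-1}$ is a constant, since then $g_{m-1}\cdot(\text{its multiplier})$ is itself a nonnegative diagonal polynomial of degree $\le 2k$ and is absorbed into $\mathbf G_0$ (with $\xi_k$ enlarged accordingly). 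Apart from this, the whole proof is the two-fold telescoping of Lemma~\ref{lem:equality} together with the observation that every combinatorial coefficient appearing is $\ge 1$.
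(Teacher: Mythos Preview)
Your argument is correct and rests on the same key observation as the paper's proof---namely that $\sum_{i\in[m-1]}g_i=L$ lets one assign a \emph{single} diagonal multiplier to all of $g_1,\dots,g_{m-1}$---but you realize it via a different decomposition. The paper applies Lemma~\ref{lem:equality} once at order $k$ and then \emph{splits} $\Theta_k:=(1+\|\x\|_2^2)^k$ as $\bigl(\Theta_k-\tfrac{L}{L+1}\Theta_{k-d}\bigr)+\tfrac{L}{L+1}\Theta_{k-d}$, using the second piece as the common multiplier; this yields the smaller constant $\xi_k=(R+1)^k$. You instead apply Lemma~\ref{lem:equality} twice (at orders $k$ and $k-d$) and add, obtaining $\xi_k=(R+1)^k+L(R+1)^{k-d}$.

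What your route buys is that every diagonal entry you write down is a multinomial coefficient (or a sum thereof) and hence manifestly $\ge 1$, so the LP constraint $\mathbf G_i-\mathbf I_i\succeq 0$ is immediate. In the paper's decomposition the entries of $\mathbf G_0$ and of the common multiplier carry the factor $\tfrac{1}{L+1}<1$ (e.g.\ the constant term of $\Theta_k-\tfrac{L}{L+1}\Theta_{k-d}$ is $\tfrac{1}{L+1}$), so strictly speaking a global rescaling is needed before the constructed point is LP-feasible; of course this is harmless for the CTP conclusion. Conversely, the paper's constant trace is smaller, which is the quantity one ultimately wants small for the downstream first-order solvers---though since LP~\eqref{eq:find.CTP.diag} is a minimization, neither explicit $\xi_k$ is the one actually used.

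Your caveat about the degenerate case $\lceil g_{m-1}\rceil<d$ is well taken; the paper's proof makes the same tacit assumption (it sets $u:=\lceil g_i\rceil$ for all $i\in[m-1]$), and neither argument covers the intermediate case $0<\lceil g_{m-1}\rceil<d$ without a small additional step.
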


\begin{example}
Let $R,L>0$ satisfy $R\ge L^2$ and 
\begin{equation}\label{eq:stand.simplex}
    m=n+2\,,\, g_i=x_i \textrm{ for } i\in [n]\,,\,  g_{n+1}=L-\sum_{i\in[n]}x_i \text{ and }\ g_{n+2}=R-\|\mathbf x\|_2^2\,.
\end{equation}
Then Assumption \ref{ass:equidegree} holds and $S(g)$ is a simplex.
\end{example}

When $S(g)$ is compact, we can always reformulate POP \eqref{eq:POP.def} such that Assumption \ref{ass:equidegree} holds. Suppose $S(g)\subseteq \mathcal{B}_R$ for some $R$. Let $u=\max_{i\in[m]}\lceil g_i\rceil$. Set $\tilde g_i:=g_i(1+\|\mathbf x\|_2^2)^{u-\lceil g_i\rceil}$ for $i\in[m]$. Let $L$ be a positive number such that $\sum_{i\in[m]}\tilde g_i\le L$ on $S(g)$. Set $\tilde g_{m+1}:=L- \sum_{i\in[m]}\tilde g_i$ and $\tilde g_{m+2}:=R-\|\mathbf x\|_2^2$.
\begin{remark}\label{rem:choose.L}
For the latter case, one can choose any positive number $L\ge (R+1)^u\sum_{i\in[m]} \|g_i\|_1$. Indeed, for any $\mathbf z\in S(g)$, and since $\|\mathbf z\|_2^2\le R$:
\begin{equation*}
    |\mathbf z^\alpha|=\prod_{i\in[n]} |z_i|^{\alpha_i}\le \prod_{i\in[n]} (1+\|\mathbf z\|_2^2)^{\alpha_i/2}=(1+\|\mathbf z\|_2^2)^{|\alpha|/2}\le (1+R)^{t}\,,\,\forall \alpha\in \N^n_{2t}\,.
\end{equation*}
This implies that for every $i\in[m]$,
\begin{equation*}
    \tilde g_i(\mathbf z)\le (1+R)^{u-\lceil g_i\rceil} \sum_{\alpha\in\N^n_{2\lceil g_i\rceil}}|g_\alpha| |\mathbf z^\alpha|\le (1+R)^{u-\lceil g_i\rceil} (R+1)^{\lceil g_i\rceil} \|g_i\|_1 = (1+R)^{u}\|g_i\|_1\,.
\end{equation*}
Thus we have $\sum_{i\in[m]}\tilde g_i\le (1+R)^{u}\sum_{i\in[m]}\|g_i\|_1$ on $S(g)$.

\end{remark}

\begin{corollary}\label{coro:equi.pop.ctp}
With the above notation, $S(g\cup \{\tilde g_{m+1}, \tilde g_{m+2}\})=S(g)$ and LP \eqref{eq:find.CTP.diag} has a feasible solution when replacing $g$ by $g\cup \{\tilde g_{m+1},\tilde g_{m+2}\}$ for each $k\in\N^{\ge k_{\min}}$.
As a result, POP \eqref{eq:POP.def} is equivalent to the new POP
\begin{equation}
    f^\star:=\inf \{f(\mathbf x)\ :\ \mathbf x\in S(g\cup \{\tilde g_{m+1},\tilde g_{m+2}\})\cap V(h)\}
\end{equation} which has CTP.
\end{corollary}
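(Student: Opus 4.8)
The plan is to split the statement into its three assertions and handle each by unwinding definitions, the only substantive step being the recognition that the enlarged constraint family $g\cup\{\tilde g_{m+1},\tilde g_{m+2}\}$ falls under Assumption \ref{ass:equidegree}, so that Proposition \ref{prop:CTP.equidegree} applies directly. First I would establish the set equality $S(g\cup\{\tilde g_{m+1},\tilde g_{m+2}\})=S(g)$. The inclusion ``$\subseteq$'' is trivial, since the left-hand set is defined by a superset of the inequalities defining $S(g)$. For ``$\supseteq$'', take $\mathbf x\in S(g)$: then $g_i(\mathbf x)\ge0$ for all $i\in[m]$, and since $u\ge\lceil g_i\rceil$ the factor $(1+\|\mathbf x\|_2^2)^{u-\lceil g_i\rceil}$ is nonnegative, so $\tilde g_i(\mathbf x)\ge0$; by the choice of $L$ (Remark \ref{rem:choose.L}) we get $\sum_{i\in[m]}\tilde g_i(\mathbf x)\le L$, hence $\tilde g_{m+1}(\mathbf x)\ge0$; and $S(g)\subseteq\mathcal B_R$ gives $\|\mathbf x\|_2^2\le R$, hence $\tilde g_{m+2}(\mathbf x)\ge0$. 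So $\mathbf x$ lies in the enlarged set.

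Next I would verify that $g\cup\{\tilde g_{m+1},\tilde g_{m+2}\}$ satisfies Assumption \ref{ass:equidegree} and then invoke Proposition \ref{prop:CTP.equidegree}. The crucial computation is the degree identity $\lceil p\,(1+\|\mathbf x\|_2^2)^t\rceil=\lceil p\rceil+t$, valid for any $p\in\R[\mathbf x]$ and $t\in\N$ because $\deg\big(p\,(1+\|\mathbf x\|_2^2)^t\big)=\deg p+2t$. Applied with $p=g_i$, $t=u-\lceil g_i\rceil$, it gives $\lceil\tilde g_i\rceil=u$ for every $i\in[m]$, so $\{\tilde g_i\}_{i\in[m]}$ is of equivalent degree; together with $\tilde g_{m+1}=L-\sum_{i\in[m]}\tilde g_i$, $\tilde g_{m+2}=R-\|\mathbf x\|_2^2$, $L>0$, $R>0$ and $m+2\ge3$, this is exactly Assumption \ref{ass:equidegree} for the enlarged system. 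Hence Proposition \ref{prop:CTP.equidegree} gives a feasible solution of LP \eqref{eq:find.CTP.diag} (built from the enlarged system) for every relaxation order; and this order range is unchanged, since $k_{\min}$ already includes $\max_{i\in[m]}\lceil g_i\rceil=u$ while $\lceil\tilde g_{m+1}\rceil\le u$ and $\lceil\tilde g_{m+2}\rceil=1$.

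Finally, the two POPs are equivalent because by the first step their feasible sets $S(g)\cap V(h)$ and $S(g\cup\{\tilde g_{m+1},\tilde g_{m+2}\})\cap V(h)$ coincide, so they share the same optimal value and the same minimizers; and the new POP has CTP by Lemma \ref{lem:feas.LP} applied to the LP-feasible solution produced in the previous step. I expect the main obstacle, such as it is, to be the careful bookkeeping of ceilings and of the minimal relaxation order $k_{\min}$ in the middle step; the rest is a direct consequence of already-proved results.
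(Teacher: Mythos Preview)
There is a genuine gap in the middle step. You claim to verify Assumption~\ref{ass:equidegree} for the system $g\cup\{\tilde g_{m+1},\tilde g_{m+2}\}=\{g_1,\dots,g_m,\tilde g_{m+1},\tilde g_{m+2}\}$, but what you actually compute is $\lceil\tilde g_i\rceil=u$ for $i\in[m]$; this shows that $\{\tilde g_1,\dots,\tilde g_{m+2}\}$ satisfies Assumption~\ref{ass:equidegree}, not the system in the corollary. The original $g_1,\dots,g_m$ need \emph{not} be of equivalent degree, and moreover $\tilde g_{m+1}=L-\sum_{i\in[m]}\tilde g_i$, which is not $L-\sum_{i\in[m]} g_i$. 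So Proposition~\ref{prop:CTP.equidegree} cannot be invoked directly on $g\cup\{\tilde g_{m+1},\tilde g_{m+2}\}$.

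The paper closes this gap with an extra conversion step that you are missing. It first applies Proposition~\ref{prop:CTP.equidegree} to $\tilde g=\{\tilde g_1,\dots,\tilde g_{m+2}\}$, obtaining a representation
\[
1=\sigma_0+\sum_{i\in[m]}\sigma_i\,\tilde g_i+\sigma_{m+1}\tilde g_{m+1}+\sigma_{m+2}\tilde g_{m+2}+\sum_{j\in[l]}\psi_j h_j
\]
with each $\sigma_i$ a diagonal strictly positive form (i.e.\ $\sigma_i=\sum_\alpha c_{i,\alpha}\mathbf x^{2\alpha}$ with all $c_{i,\alpha}>0$). Then it uses $\tilde g_i=g_i(1+\|\mathbf x\|_2^2)^{u-\lceil g_i\rceil}$ to rewrite $\sigma_i\tilde g_i=\big(\sigma_i(1+\|\mathbf x\|_2^2)^{u-\lceil g_i\rceil}\big)g_i$ and observes that $\sigma_i(1+\|\mathbf x\|_2^2)^{u-\lceil g_i\rceil}$ is again a diagonal strictly positive form, now of the correct degree $2(k-\lceil g_i\rceil)$. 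This yields a feasible point of LP~\eqref{eq:find.CTP.diag} for the system $g\cup\{\tilde g_{m+1},\tilde g_{m+2}\}$. Your argument for the set equality and the equivalence of the two POPs is fine; it is only this degree-homogenisation/absorption step that is missing.
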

The proof of Corollary \ref{coro:equi.pop.ctp} is postponed to Appendix \ref{proof:coro:equi.pop.ctp}.

In case where  POP \eqref{eq:POP.def} does not have CTP and $S(g)$ is compact, Corollary \ref{coro:equi.pop.ctp} provides a way to construct an equivalent POP by including two additional redundant  constraints. Then CTP of this new POP can be verified by LP.
\subsection{Main algorithm}
Algorithm \ref{alg:sol.nonsmooth.hier.B} below solves POP \eqref{eq:POP.def} whose CTP can be verified by LP. 

\begin{algorithm}
    \caption{SpecialPOP-CTP}
    \label{alg:sol.nonsmooth.hier.B} 
    \small
    \textbf{Input:} POP \eqref{eq:POP.def} and a relaxation order $k\in\N^{\ge k_{\min}}$\\
    \textbf{Output:} The optimal value $\tau_k$ of SDP \eqref{eq:SDP.form}
    \begin{algorithmic}[1]
    \State Solve LP \eqref{eq:find.CTP.diag} with an optimal solution $(\xi_k,\mathbf G_{i,k},\mathbf u_{j,k})$;
    \State Let $a_k=\xi_k$ and $\mathbf P_k=\diag(\mathbf G_{0,k}^{1/2},\dots,\mathbf G_{m,k}^{1/2})$;
    \State Compute the optimal value $\tau_k$ of SDP \eqref{eq:SDP.form} by 
    running an algorithm based on first-order methods, and which exploits CTP ;
    \end{algorithmic}
\end{algorithm}
    
Examples of algorithms based on first-order methods and which exploit CTP
are CGAL (Algorithm \ref{alg:CGAL} in Appendix \ref{sec:sdp.ctp.dense}) or SM (Algorithm \ref{alg:sol.SDP.CTP.0} in Appendix \ref{sec:spectral.method.dense}).

 
\section{Exploiting CTP for POPs with CS}
\label{sec:CTP.correlative.POP}

In this section, we extend  the CTP-exploiting framework to POPs with
sparsity. For clarity of exposition we only consider \emph{correlative sparsity} (CS).
However, in Appendix \ref{sec:sparse.POP.TS.CSTS} we also treat
\emph{term sparsity} (TS) \cite{wang2019tssos} as well as \emph{correlative-term sparsity} (CS-TS) \cite{wang2020cs}. Since the methodology is very similar
to that in the dense case described earlier, we omit details and only present the main results. 

To begin with, we recall some basic facts on exploiting CS for POP \eqref{eq:POP.def} initially proposed in \cite{waki2006sums} by Waki et al.

\subsection{POPs with CS}\label{sec:sparse.POP}
For $\alpha\in\N^n$, let $\supp(\alpha):=\{j\in[n]:\alpha_j>0\}$.
Assume $I\subseteq [n]$. Given $\mathbf y=(y_\alpha)_{\alpha\in\N^n_d}$, the moment (resp. localizing) submatrix associated to $I$ of order $d$ is defined by $\mathbf M_d(\mathbf y,I):=(y_{\alpha+\beta})_{\alpha,\beta\in \N^I_d}$ (resp. $\mathbf M_d(q\mathbf y, I):=(\sum_{\gamma}q_\gamma y_{\alpha+\beta+\gamma})_{\alpha,\beta\in \N^I_d}$ for $q\in\R[x(I)]$).
Let $\mathbf v_d^I:=(\mathbf x^\alpha)_{\alpha\in \N^I_d}$ with length $\s(|I|,d):=\binom{|I|+d}{n}$.

Assume that $\{I_j\}_{j\in[p]}$ (with $n_j:=|I_j|$) are the maximal cliques of (a chordal extension of) the correlative sparsity pattern (csp) graph associated to POP \eqref{eq:POP.def}, as defined in \cite{waki2006sums}.

Let $\{J_j\}_{j\in[p]}$ (resp. $\{W_j\}_{j\in[p]}$) be a partition of $[m]$ (resp. $[l]$) such that for all $i\in J_j$, $g_i\in \R[x(I_j)]$ (resp. $i\in W_j$, $h_i\in \R[x(I_j)]$), $j\in[p]$.
For each  $j\in [p]$, let $m_j:=|J_j|$, $l_j:=|W_j|$ and $g_{J_j}:=\{g_i\,:\,i\in J_j\}$, $h_{W_j}:=\{h_i\,:\,i\in W_j\}$.
Then $Q(g_{J_j})$ (resp. $I(h_{W_j})$) is a quadratic module (resp. an ideal) in $\R[x(I_j)]$, for $j\in[p]$.

For each $k\in \N^{\ge k_{\min}}$, consider the following sparse SOS relaxation:
\begin{equation}\label{eq:primal.cs.SOS}
\rho_{k}^{\text{cs}} := \sup\ \left\{\xi\,:\,f-\xi\in\sum_{j\in[p]} (Q_k(g_{J_j})+I_k(h_{W_j}))\right\}\,.
\end{equation}
It is equivalent to the SDP:
\begin{equation}\label{eq:primal.cs.SDP}
\rho_{k}^{\text{cs}} = \sup \limits_{\xi,\mathbf G_i^{(j)},\mathbf u_i^{(j)}} \left\{ \xi\ \left|\begin{array}{rl}
&{\mathbf G}_i^{(j)} \succeq 0\,,\,i\in\{0\}\cup J_j\,,\,j\in [p]\,,\\
&f-\xi=\sum_{j\in[p]}\left((\mathbf v_k^{I_j})^\top {\mathbf G}_0^{(j)} \mathbf v_k^{I_j}\right.\\
&\qquad\qquad+\sum_{i\in J_j} g_i (\mathbf v_{k-\lceil g_i\rceil}^{I_j})^\top {\mathbf G}_i^{(j)} \mathbf v_{k-\lceil g_i\rceil}^{I_j}\\
&\qquad\qquad\left.+\sum_{i\in W_j} h_i (\mathbf v_{2(k-\lceil h_i\rceil)}^{I_j})^\top {\mathbf u}_i^{(j)} \right)
\end{array}
\right. \right\}\,.
\end{equation}

The dual of \eqref{eq:primal.cs.SDP} reads:
\begin{equation}\label{eq:dual.cs.SDP} 
\tau_k^{\text{cs}} \,:= \,\inf \limits_{\mathbf y \in {\R^{\s({2k})} }} \left\{ L_{\mathbf y}(f)\ \left|\begin{array}{rl}
& \mathbf M_k(\mathbf y, I_j) \succeq 0\,,\,j\in[p]\,,\,y_{\mathbf 0}\,=\,1\,.\\
&\mathbf M_{k - \lceil g_i \rceil }(g_i\;\mathbf y,I_j)   \succeq 0\,,\,i\in J_j\,,\,j\in[p]\,,\\
&\mathbf M_{k - \lceil h_i \rceil }(h_i\;\mathbf y,I_j)   = 0\,,\,i\in W_j\,,\,j\in[p]
\end{array}\right. \right\}\,.
\end{equation}


It is shown in \cite[Theorem 3.6]{lasserre2006convergent} that convergence of the primal-dual \eqref{eq:primal.cs.SDP}-\eqref{eq:dual.cs.SDP} to $f^\star$ is guaranteed if there are additional ball constraints on each clique of variables.


\subsection{Exploiting CTP for POPs with CS}
\label{sec:def.CTP.each.clique}
Consider POP \eqref{eq:POP.def} with CS described in Section \ref{sec:sparse.POP}.
For every $j\in[p]$ and for every $k\in\N^{\ge k_{\min}}$, letting $\mathbf D_k(\mathbf y,I_j):=\diag(\mathbf M_k(\mathbf y,I_j),(\mathbf M_{k-\lceil g_i\rceil}(g_i\mathbf y,I_j))_{i\in  J_j})$ for $\mathbf y\in\R^{s(k)}$,
SDP \eqref{eq:dual.cs.SDP} can be rewritten as
\begin{equation}\label{eq:cs-ts.SDP.simple}
\tau_{k}^{\text{cs}} \,:= \,\inf \limits_{\mathbf y \in {\R^{\s({2k})} }} \left\{ L_{\mathbf y}(f)\ \left|\begin{array}{rl}
& \mathbf D_k(\mathbf y, I_j) \succeq 0\,,\,j\in[p]\,,\,y_{\mathbf 0}\,=\,1\,,\\
&\mathbf M_{k - \lceil h_i \rceil }(h_i\;\mathbf y,I_j)   = 0\,,\,i\in W_j\,,\,j\in[p]
\end{array}\right.\right\}\,.
\end{equation}
We define CTP for POP with CS as follows.
\begin{definition}\label{def:ctp.cs}(CTP for a POP with CS)
We say that POP \eqref{eq:POP.def} with CS has CTP if for every $k\in\N^{\ge k_{\min}}$ and for every $j\in[p]$, there exists a positive number $a_k^{(j)}$ and a positive definite matrix $\mathbf P_{k}^{(j)}\in \mathcal{S}_k$ such that  for all $\mathbf y \in \R^{\s(2k)}$,
\begin{equation}
    \left.
\begin{array}{rl}
&\mathbf M_{k-\lceil h_i\rceil}(h_i\mathbf y,I_j)=0\,,\,i\in W_j\,,\\
&y_{\mathbf 0}=1
\end{array}
\right\}\Rightarrow  \trace(\mathbf P_{k}^{(j)} \mathbf D_k(\mathbf y,I_j) \mathbf P_{k}^{(j)})=a_k^{(j)}\,.
\end{equation}
\end{definition}

The following result provides a sufficient condition for a POP with CS to have CTP. 
\begin{theorem}\label{theo:generic.ctp.cs}
Assume that there is a ball constraint on each clique of variables, i.e.,
\begin{equation}\label{eq:ball.on.clique}
    \forall\ j\in[p],\, R_j-\|\mathbf x(I_j)\|_2^2\in g \textrm{ for some }R_j>0\,.
\end{equation}
Then one has
$\R^{>0}\subseteq Q_k^\circ(g_{J_j})$, for all $k\in\N^{\ge k_{\min}}$ and for all $j\in [p]$.
As a consequence, POP \eqref{eq:POP.def} has CTP.
\end{theorem}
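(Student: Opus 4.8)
The plan is to reduce the statement to the dense result Theorem \ref{theo:generic.ctp}, applied separately to each clique. Fix $j\in[p]$. By hypothesis \eqref{eq:ball.on.clique}, the family $g_{J_j}=\{g_i:i\in J_j\}$ lies in $\R[x(I_j)]$ and contains the ball constraint $R_j-\|\mathbf x(I_j)\|_2^2$. Hence clique $j$ is, verbatim, an instance of the setting of Theorem \ref{theo:generic.ctp} in the $n_j:=|I_j|$ variables $\mathbf x(I_j)$ with inequality set $g_{J_j}$, and $Q_k^\circ(g_{J_j})$ is exactly the interior of the truncated quadratic module of that sub-problem. Applying Theorem \ref{theo:generic.ctp} (with $n,g,R$ replaced by $n_j,g_{J_j},R_j$) gives $\R^{>0}\subseteq Q_k^\circ(g_{J_j})$ for every admissible relaxation order of the sub-problem; since $k_{\min}$ of the full POP \eqref{eq:POP.def} is at least the minimum relaxation order of each sub-problem, this holds for all $k\in\N^{\ge k_{\min}}$, which is the first claim.

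For the CTP conclusion I would argue clique-wise, imitating the proof of Theorem \ref{theo:suff.cond.CTP}(1) and Lemma \ref{lem:feas.LP}. Fix $k\ge k_{\min}$ and $j\in[p]$. Since $1\in Q_k^\circ(g_{J_j})$, there exist positive definite matrices $\mathbf G_i^{(j)}$, $i\in\{0\}\cup J_j$, with
\[
1=(\mathbf v_k^{I_j})^\top\mathbf G_0^{(j)}\mathbf v_k^{I_j}+\sum_{i\in J_j}g_i\,(\mathbf v_{k-\lceil g_i\rceil}^{I_j})^\top\mathbf G_i^{(j)}\mathbf v_{k-\lceil g_i\rceil}^{I_j}\,.
\]
Applying the Riesz functional $L_{\mathbf y}$ to this identity for an arbitrary $\mathbf y$ with $y_{\mathbf 0}=1$, and using $L_{\mathbf y}\big((\mathbf v_k^{I_j})^\top\mathbf G_0^{(j)}\mathbf v_k^{I_j}\big)=\trace(\mathbf G_0^{(j)}\mathbf M_k(\mathbf y,I_j))$ together with $L_{\mathbf y}\big(g_i(\mathbf v_{k-\lceil g_i\rceil}^{I_j})^\top\mathbf G_i^{(j)}\mathbf v_{k-\lceil g_i\rceil}^{I_j}\big)=\trace(\mathbf G_i^{(j)}\mathbf M_{k-\lceil g_i\rceil}(g_i\mathbf y,I_j))$, one obtains, with $\mathbf P_k^{(j)}:=\diag\big((\mathbf G_0^{(j)})^{1/2},((\mathbf G_i^{(j)})^{1/2})_{i\in J_j}\big)\succ 0$,
\[
\trace\big(\mathbf P_k^{(j)}\mathbf D_k(\mathbf y,I_j)\mathbf P_k^{(j)}\big)=1\,.
\]
Since this holds for every $\mathbf y$ with $y_{\mathbf 0}=1$ — in particular for those also satisfying $\mathbf M_{k-\lceil h_i\rceil}(h_i\mathbf y,I_j)=0$, $i\in W_j$ — Definition \ref{def:ctp.cs} is met with $a_k^{(j)}=1$, so the POP with CS has CTP.

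The only delicate point is the bookkeeping in the reduction: one must verify that $Q_k^\circ(g_{J_j})$ coincides with the analogous object for the sub-problem in the variables $\mathbf x(I_j)$, and that the submatrices $\mathbf M_k(\mathbf y,I_j)$ and $\mathbf M_{k-\lceil g_i\rceil}(g_i\mathbf y,I_j)$ appearing above are precisely the full moment and localizing matrices of that sub-problem evaluated at the restricted sequence $(y_\alpha)_{\alpha\in\N^{I_j}_{2k}}$; once this identification is made everything follows from the dense results. Alternatively one may avoid citing Theorem \ref{theo:generic.ctp} directly and note that each clique, carrying a ball constraint, is an instance of Assumption \ref{ass:bound.cons}, so Proposition \ref{prop:suff.cond.feas} applies clique-wise; the traces $a_k^{(j)}$ and matrices $\mathbf P_k^{(j)}$ can then be obtained with better numerical behaviour from the clique-wise LP \eqref{eq:find.CTP.diag}.
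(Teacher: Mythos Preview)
Your proposal is correct and follows essentially the same approach the paper indicates: the paper omits the proof of Theorem~\ref{theo:generic.ctp.cs} entirely, stating only that it is ``very similar to that of Theorem~\ref{theo:generic.ctp} by considering each clique of variables,'' which is precisely the reduction you carry out. Your additional clique-wise imitation of the argument in Theorem~\ref{theo:suff.cond.CTP}(1) to extract $a_k^{(j)}$ and $\mathbf P_k^{(j)}$, and your remark on the bookkeeping identification of $Q_k^\circ(g_{J_j})$ and the submatrices with the corresponding dense objects in the variables $\mathbf x(I_j)$, simply spell out what the paper leaves implicit.
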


The proof of Theorem \ref{theo:generic.ctp.cs} being very similar to that 
of Theorem \ref{theo:generic.ctp} by considering each clique of variables, is omitted.

Again by considering each clique of variables, the following result can be obtained from Theorem \ref{theo:generic.ctp.cs} in the same way Corollary \ref{coro:stric.fea.sol.sos} was obtained. 

\begin{corollary}\label{coro:slater.con.cs}
If \eqref{eq:ball.on.clique} holds then Slater's condition for SDP \eqref{eq:primal.cs.SDP} holds for all $k\in\N^{\ge k_{\min}}$.
\end{corollary}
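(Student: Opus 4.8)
The plan is to mirror the proof of Corollary \ref{coro:stric.fea.sol.sos}, carried out clique by clique. As in that proof, Slater's condition for SDP \eqref{eq:primal.cs.SDP} amounts to the existence of a strictly feasible solution, i.e. a tuple $(\xi, \{\mathbf G_i^{(j)}\}, \{\mathbf u_i^{(j)}\})$ satisfying the polynomial identity in \eqref{eq:primal.cs.SDP} with \emph{every} Gram matrix $\mathbf G_i^{(j)}$ positive definite. Equivalently, writing $Q_k^\circ(g_{J_j})$ for the interior of the clique-truncated quadratic module in $\R[\mathbf x(I_j)]$ (as in Theorem \ref{theo:generic.ctp.cs}), it suffices to find $\xi\in\R$ with $f-\xi\in\sum_{j\in[p]}Q_k^\circ(g_{J_j})$, since that sum is exactly the image of tuples of positive definite Gram blocks.

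First I would invoke the correlative sparsity structure to decompose $f=\sum_{j\in[p]}f_j$ with $f_j\in\R[\mathbf x(I_j)]$ (this decomposition is built into the definition of the csp graph; one may take the $f_j$ with $\deg f_j\le\deg f$, so that $\lceil f_j\rceil\le k_{\min}$). Fix $k\ge k_{\min}$. For each clique $j$, hypothesis \eqref{eq:ball.on.clique} gives an index $i_j\in J_j$ with $g_{i_j}=R_j-\|\mathbf x(I_j)\|_2^2$, whose zero set is a ball in $\R^{n_j}$; hence it generates an Archimedean quadratic module in $\R[\mathbf x(I_j)]$. Applying \cite[Proposition 5.8]{marshall} inside $\R[\mathbf x(I_j)]$ to $f_j$ yields $\sigma_0^{(j)}\in\Sigma[\mathbf x(I_j)]_k$, $\sigma^{(j)}\in\Sigma[\mathbf x(I_j)]_{k-1}$ and $\lambda_j\in\R$ with $f_j+\lambda_j=\sigma_0^{(j)}+(R_j-\|\mathbf x(I_j)\|_2^2)\sigma^{(j)}$, so $f_j+\lambda_j\in Q_k(g_{J_j})$ (all remaining Gram blocks being zero).

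Next I would apply Theorem \ref{theo:generic.ctp.cs}: since \eqref{eq:ball.on.clique} holds, $1\in Q_k^\circ(g_{J_j})$ for every $j\in[p]$. Because adding an element of $Q_k(g_{J_j})$ (Gram blocks $\succeq0$) to an element of $Q_k^\circ(g_{J_j})$ (Gram blocks $\succ0$) produces Gram blocks that are sums $\succ0+\succeq0\succ0$, we get $f_j+1+\lambda_j\in Q_k^\circ(g_{J_j})$. Setting $\xi:=-\sum_{j\in[p]}(1+\lambda_j)$ and summing the per-clique representations gives $f-\xi=\sum_{j\in[p]}(f_j+1+\lambda_j)\in\sum_{j\in[p]}Q_k^\circ(g_{J_j})$, which is precisely a strictly feasible point of SDP \eqref{eq:primal.cs.SDP}. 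As for $k\ge k_{\min}$ was arbitrary, this proves the corollary; strong duality for the primal–dual \eqref{eq:primal.cs.SDP}--\eqref{eq:dual.cs.SDP} follows in the usual way.

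The only nontrivial point — exactly as in the dense case — is the need for a \emph{degree-controlled} Putinar certificate $f_j+\lambda_j=\sigma_0^{(j)}+g_{i_j}\sigma^{(j)}$ with $\deg\sigma_0^{(j)}\le 2k$ and $\deg\sigma^{(j)}\le 2(k-1)$, which is why the proof leans on the explicit degree bound in \cite[Proposition 5.8]{marshall} rather than on bare compactness of the clique balls. A secondary bookkeeping issue is that $\lceil f_j\rceil$ must not exceed $k_{\min}$ for the degrees to match at every relaxation order, which is arranged by choosing the decomposition $f=\sum_j f_j$ with $\deg f_j\le\deg f$. Everything else (the sum over cliques, the positive-definiteness of the combined Gram blocks) is purely formal, so no further work is required.
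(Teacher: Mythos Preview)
Your proof is correct and follows precisely the route the paper indicates: the paper omits a detailed argument and simply states that the result ``can be obtained from Theorem \ref{theo:generic.ctp.cs} in the same way Corollary \ref{coro:stric.fea.sol.sos} was obtained,'' i.e., by running the dense argument clique by clique, which is exactly what you did. The bookkeeping remarks you flag (degree control via \cite[Proposition 5.8]{marshall}, the decomposition $f=\sum_j f_j$ with $\deg f_j\le\deg f$, and the implicit assignment of each ball constraint to its own clique's $J_j$) are all consistent with the paper's conventions.
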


We are now in position to provide a general method to solve POPs with CS which have CTP.\\

Consider POP \eqref{eq:POP.def} with CS described in Section \ref{sec:sparse.POP}. Assume that POP \eqref{eq:POP.def} has CTP and let $k\in\N^{\ge k_{\min}}$ be fixed.
For every $j\in[p]$, we denote by $\mathcal{S}_{j,k}$ the set of real symmetric matrices of size $\s(k,n_j)+\sum_{i\in J_j}\s(k-\lceil g_i\rceil,n_j)$ in a block diagonal form:
$\mathbf X=\diag(\mathbf X_{0},(\mathbf X_i)_{i\in J_j})$
such that $\mathbf X_{0}$ is a block of size $\s(k,n_j)$ and $\mathbf X_i$ is a block of size $\s(k-\lceil g_i\rceil,n_j)$ for $i\in J_j$.

Letting 
\begin{equation}\label{eq:convert.momentmat.sparse}
\mathbf X_j=\mathbf P_{k}^{(j)}\mathbf D_k(\mathbf y,I_j)\mathbf P_{k}^{(j)}\,,\,j\in[p]\,,
\end{equation}
SDP \eqref{eq:cs-ts.SDP.simple} can be rewritten as:
\begin{equation}\label{eq:SDP.form.sparse}
\tau_{k}^{\text{cs}} = \inf _{\mathbf X_j\in \mathcal{S}_{j,k}^+} \left\{ \sum_{j\in[p]}\left< \mathbf C_{j,k},\mathbf X_j\right>\,:\,\sum_{j\in[p]}\mathcal{A}_{j,k} \mathbf X_j=\mathbf b_k\,,\,j\in[p]\right\}\,,
\end{equation}
where for every $j\in[p]$, $\mathcal{A}_{j,k}:\mathcal{S}_{j,k}\to \R^{\zeta_k}$ is a linear operator of the form
$\mathcal{A}_{j,k}\mathbf X=(\left< \mathbf A_{j,k,1},\mathbf X\right>,\dots,\left< \mathbf A_{j,k,\zeta_k},\mathbf X\right>)$
with $\mathbf A_{j,k,i} \in \mathcal{S}_{j,k}$, $i\in[\zeta_k]$, $\mathbf C_{j,k} \in \mathcal{S}_{j,k}$, $j\in[p]$ and $\mathbf b_k\in \R^{\zeta_k}$. 
See Appendix \ref{sec:convert.standart.SDP.cs} for the conversion of SDP \eqref{eq:cs-ts.SDP.simple} to the form \eqref{eq:SDP.form.sparse}.

The dual of SDP \eqref{eq:SDP.form.sparse} reads as:
\begin{equation}\label{eq:SDP.form.sparse.dual}
\rho^{\text{cs}}_k = \sup_{\mathbf y\in\R^\zeta}\, \left\{\, \mathbf b_k^\top\mathbf y\,:\,
\mathcal{A}_{j,k}^\top \mathbf y-\mathbf C_{j,k}\in \mathcal{S}^+_{j,k}\,,j\in[p]\,\right\}\,,
\end{equation}
where
$\mathcal{A}_{j,k}^\top:\R^{\zeta}\to \mathcal{S}_{j,k}$ is the adjoint operator of $\mathcal{A}_{j,k}$, i.e., $\mathcal{A}_{j,k}^\top\mathbf z=\sum_{i\in[\zeta]} z_i\mathbf A_{j,k,i}$, $j\in[p]$.
By Definition \ref{def:ctp.cs}, it holds that for every $k\in\N^{\ge k_{\min}}$,
\begin{equation}
\left.
\begin{array}{lr}
    \forall \ \mathbf X_j\in \mathcal S_{j,k}\,,\,j\in[p]\\
    \sum_{j\in[p]}\mathcal{A}_{j,k} \mathbf X_j=\mathbf b_k
        \end{array}
    \right\}
    \Rightarrow \trace(\mathbf X_j)=a_k^{(j)}\,,\,j\in[p]\,.
\end{equation}

After replacing $(\mathcal{A}_{j,k}, \mathbf{A}_{j,k,i},  \mathbf b_k, \mathbf C_{j,k}, \mathcal{S}_{j,k}, \zeta_k,  \tau_{k}^{\text{cs}}, a_k^{(j)})$ by $(\mathcal{A}_j, \mathbf{A}_{i,j}, \mathbf b, \mathbf C_j, \mathcal{S}_j, \zeta, \tau, a_j)$, SDP \eqref{eq:SDP.form.sparse} then becomes SDP \eqref{eq:SDP.form.0.blocks}; see Appendix \ref{sec:sdp.ctp.blocks} with $\omega_j=m_j+1$ and $s^{\max}=\max_{j\in[p]}\s(k,n_j)$.

If there is a ball constraint on each clique of variables then by Corollary \ref{coro:slater.con.cs}, strong duality holds for the 
pair \eqref{eq:SDP.form.sparse}-\eqref{eq:SDP.form.sparse.dual}, for every $k\in\N^{\ge k_{\min}}$.

The two algorithms (CGAL and SM) based on first-order methods are then leveraged to solve the primal-dual \eqref{eq:SDP.form.sparse}-\eqref{eq:SDP.form.sparse.dual}; see Appendix \ref{sec:sdp.ctp.blocks} and Appendix \ref{app:spectral.SDP.ctp}.


\subsection{Verifying CTP for POPs with CS via LP}
As in the dense case, we can verify CTP for a POP with CS via a series of LPs.

For every $k\in\N^{\ge k_{\min}}$ and for every $j\in[p]$, let $\hat{\mathcal  S}_{k,j}$ be the set of real diagonal matrices of size $\s(k,n_j)$ and consider the following LP:
\begin{equation}\label{eq:find.CTP.cliq}
\inf \limits_{\xi,\mathbf G_i,\mathbf u_i} \left\{ \xi\ \left|\begin{array}{rl}
&\mathbf G_0-\mathbf I_0 \in \hat{\mathcal S}_{k,j}^+\,,\,\mathbf G_i-\mathbf I_i \in \hat{\mathcal S}_{k-\lceil g_i\rceil,j}^+\,,\,i\in J_j\,,\\
&\xi=(\mathbf v_k^{I_j})^\top {\mathbf G}_0 \mathbf v_k^{I_j}+\sum_{i\in J_j} g_i (\mathbf v_{k-\lceil g_i\rceil}^{I_j})^\top {\mathbf G}_i \mathbf v_{k-\lceil g_i\rceil}^{I_j}\\
&\qquad\qquad\qquad\qquad\qquad+\sum_{i\in W_j} h_i (\mathbf v_{2(k-\lceil h_i\rceil)}^{I_j})^\top {\mathbf u}_i 
\end{array}
\right. \right\}\,,
\end{equation}
where $\mathbf I_i$ is the identity matrix, for every $i\in\{0\}\cup J_j$.

\begin{lemma}\label{lem:feas.LP.cs}
Let POP \eqref{eq:POP.def} with CS be described in Section \ref{sec:sparse.POP}. If LP \eqref{eq:find.CTP.cliq} has a feasible solution $(\xi_{k}^{(j)},\mathbf G_{i,k}^{(j)},\mathbf u_{i,k}^{(j)})$, for every $k\in\N^{\ge k_{\min}}$ and for every $j\in[p]$, then POP \eqref{eq:POP.def} has CTP with $\mathbf P_k^{(j)}=\diag(\mathbf G_{0,k}^{1/2},(\mathbf G_{i,k}^{1/2})_{i\in J_i})$ and $a_k^{(j)}=\xi_k^{(j)}$, for $k\in\N^{\ge k_{\min}}$ and for $j\in[p]$.
\end{lemma}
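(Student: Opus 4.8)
The plan is to reduce the CS case to the dense case clique-by-clique, exactly mirroring the passage from LP \eqref{eq:find.CTP.diag} to Lemma \ref{lem:feas.LP}. Fix $k\in\N^{\ge k_{\min}}$ and $j\in[p]$, and let $(\xi_k^{(j)},\mathbf G_{i,k}^{(j)},\mathbf u_{i,k}^{(j)})$ be the given feasible solution of LP \eqref{eq:find.CTP.cliq}. First I would record the two structural facts supplied by feasibility: (a) the identity
\begin{equation*}
\xi_k^{(j)}=(\mathbf v_k^{I_j})^\top \mathbf G_{0,k}^{(j)} \mathbf v_k^{I_j}+\sum_{i\in J_j} g_i\,(\mathbf v_{k-\lceil g_i\rceil}^{I_j})^\top \mathbf G_{i,k}^{(j)} \mathbf v_{k-\lceil g_i\rceil}^{I_j}+\sum_{i\in W_j} h_i\,(\mathbf v_{2(k-\lceil h_i\rceil)}^{I_j})^\top \mathbf u_{i,k}^{(j)}
\end{equation*}
holds in $\R[x(I_j)]$, and (b) each $\mathbf G_{i,k}^{(j)}$ is a \emph{positive definite} diagonal matrix, since $\mathbf G_{i,k}^{(j)}-\mathbf I_i$ is a psd diagonal matrix. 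Then I set $\mathbf P_k^{(j)}:=\diag(\mathbf G_{0,k}^{1/2},(\mathbf G_{i,k}^{1/2})_{i\in J_j})$ (well-defined and positive definite by (b)) and $a_k^{(j)}:=\xi_k^{(j)}$ (positive, since it equals $\xi_k^{(j)}\ge$ the trace of $\mathbf I_0$, evaluated appropriately — more directly, it dominates the constant term of the SOS part).

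The core computation is to verify the implication in Definition \ref{def:ctp.cs}. Take any $\mathbf y\in\R^{\s(2k)}$ with $y_{\mathbf 0}=1$ and $\mathbf M_{k-\lceil h_i\rceil}(h_i\mathbf y,I_j)=0$ for all $i\in W_j$. Applying the Riesz functional $L_{\mathbf y}$ to identity (a) and using the standard duality pairing between the localizing matrices and SOS/localizing representations, the left-hand side $L_{\mathbf y}(\xi_k^{(j)})=\xi_k^{(j)}\,y_{\mathbf 0}=\xi_k^{(j)}$, while the right-hand side becomes
\begin{equation*}
\left<\mathbf G_{0,k}^{(j)},\mathbf M_k(\mathbf y,I_j)\right>+\sum_{i\in J_j}\left<\mathbf G_{i,k}^{(j)},\mathbf M_{k-\lceil g_i\rceil}(g_i\mathbf y,I_j)\right>+\sum_{i\in W_j}\left<\mathbf u_{i,k}^{(j)},\mathbf M_{k-\lceil h_i\rceil}(h_i\mathbf y,I_j)\right>\,.
\end{equation*}
The last sum vanishes by the hypothesis on $\mathbf y$. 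The remaining sum is precisely $\trace\!\big(\mathbf P_k^{(j)}\mathbf D_k(\mathbf y,I_j)\mathbf P_k^{(j)}\big)$: indeed $\mathbf D_k(\mathbf y,I_j)=\diag(\mathbf M_k(\mathbf y,I_j),(\mathbf M_{k-\lceil g_i\rceil}(g_i\mathbf y,I_j))_{i\in J_j})$, so conjugating by $\mathbf P_k^{(j)}$ and tracing produces $\sum\trace(\mathbf G_{\bullet,k}^{(j)}\mathbf M_\bullet)=\sum\left<\mathbf G_{\bullet,k}^{(j)},\mathbf M_\bullet\right>$ block by block, since trace is cyclic and $\mathbf P_k^{(j)}$ is symmetric. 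Hence $\trace(\mathbf P_k^{(j)}\mathbf D_k(\mathbf y,I_j)\mathbf P_k^{(j)})=\xi_k^{(j)}=a_k^{(j)}$, establishing CTP for POP \eqref{eq:POP.def} with CS via Definition \ref{def:ctp.cs}. Since $k$ and $j$ were arbitrary, this finishes the proof.

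The only mild subtlety — and what I expect to be the main obstacle worth flagging — is the bookkeeping that the coefficient-matching identity (a), which lives in $\R[x(I_j)]$, pairs correctly with the \emph{submatrices} $\mathbf M_\bullet(\cdot,I_j)$ restricted to the exponent set $\N^{I_j}$ rather than with the full moment/localizing matrices; this is where the ``$I_j$'' restriction in the definitions of $\mathbf v_k^{I_j}$ and $\mathbf M_k(\mathbf y,I_j)$ must be used, and it is exactly the reason LP \eqref{eq:find.CTP.cliq} is stated clique-wise. Once that identification is in place the argument is identical to the dense case (Lemma \ref{lem:feas.LP} / Theorem \ref{theo:suff.cond.CTP}), so I would simply remark that the proof is the dense argument applied on each clique $I_j$ and omit the repeated details.
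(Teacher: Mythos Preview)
Your proposal is correct and matches the paper's approach exactly: the paper states only that ``the proof of Lemma \ref{lem:feas.LP.cs} is similar to that of Lemma \ref{lem:feas.LP}'' (which in turn is declared similar to the proof of Theorem \ref{theo:suff.cond.CTP}), and what you have written is precisely that dense computation---applying $L_{\mathbf y}$ to the LP identity, killing the $h_i$-terms via the hypothesis, and identifying the remaining pairing with $\trace(\mathbf P_k^{(j)}\mathbf D_k(\mathbf y,I_j)\mathbf P_k^{(j)})$---carried out on the clique $I_j$. Your closing remark that one would ``simply remark that the proof is the dense argument applied on each clique $I_j$ and omit the repeated details'' is literally what the paper does.
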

The proof of Lemma \ref{lem:feas.LP.cs} is similar to that of Lemma \ref{lem:feas.LP}.

For instance, for POPs with ball or annulus constraints on subsets of each clique of variables,
CTP can be verified by LP.



\begin{proposition}\label{prop:suff.cond.feas.cs}
Let POP \eqref{eq:POP.def} with CS be described in Section \ref{sec:sparse.POP}. Let $(T_i)_{i\in[r]\cup([m]\backslash [2r])}$ be as in Assumption \ref{ass:bound.cons} and further assume that for every $j\in[p]$, $(\cup_{q\in J_j\cap [r]} T_q)\cup (\cup_{q\in J_j\backslash [2r]} T_q)=I_j$. Then LP \eqref{eq:find.CTP.cliq} has a feasible solution for every $k\in\N^{\ge k_{\min}}$, and therefore POP \eqref{eq:POP.def} has CTP.
\end{proposition}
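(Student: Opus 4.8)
The plan is to reduce everything to the dense statement Proposition~\ref{prop:suff.cond.feas}, applied separately on each clique, and then to invoke Lemma~\ref{lem:feas.LP.cs}. Fix $k\in\N^{\ge k_{\min}}$ and $j\in[p]$. The first step is to check that the polynomials $g_{J_j}$, regarded as elements of $\R[\mathbf x(I_j)]$, constitute an instance of Assumption~\ref{ass:bound.cons} with the index set $I_j$ playing the role of $[n]$. Indeed, for $q\in J_j$ the constraint $g_q$ involves only the variables $\mathbf x(T_q)$, hence $T_q\subseteq I_j$; moreover $g_q$ and $g_{q+r}$ have the same support $T_q$, so (after, if necessary, choosing the partition $\{J_j\}$ so that it does not split an annulus pair) $q\in J_j\cap[r]$ forces $q+r\in J_j$, and the remaining members of $J_j$ are ball constraints of the form $\overline R_i-\|\mathbf x(T_i)\|_2^2$; finally the added hypothesis $\bigl(\bigcup_{q\in J_j\cap[r]}T_q\bigr)\cup\bigl(\bigcup_{q\in J_j\setminus[2r]}T_q\bigr)=I_j$ is precisely condition (1) of Assumption~\ref{ass:bound.cons} relative to $I_j$.

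The second step is to observe that the equality constraints contribute nothing to feasibility of LP~\eqref{eq:find.CTP.cliq}: setting $\mathbf u_i=\mathbf 0$ for every $i\in W_j$ deletes the last sum, and what remains is exactly LP~\eqref{eq:find.CTP.diag} written for the ``dense sub-POP'' on the variables $\mathbf x(I_j)$ with inequality constraints $g_{J_j}$. The order $k$ is admissible for this sub-POP since its own minimal relaxation order $\max\{\lceil g_i\rceil:i\in J_j\}$ does not exceed $k_{\min}\le k$. Therefore Proposition~\ref{prop:suff.cond.feas} --- whose proof in Appendix~\ref{proof:suff.cond.feas} goes through verbatim with $\R[\mathbf x(I_j)]$ in place of $\R[\mathbf x]$ --- yields a feasible solution $(\xi_k^{(j)},\mathbf G_{i,k}^{(j)},\mathbf u_{i,k}^{(j)})$ of LP~\eqref{eq:find.CTP.cliq}, with $\mathbf u_{i,k}^{(j)}=\mathbf 0$.

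Since this holds for all $j\in[p]$ and all $k\in\N^{\ge k_{\min}}$, the conclusion follows at once from Lemma~\ref{lem:feas.LP.cs}: POP~\eqref{eq:POP.def} with CS has CTP, with $\mathbf P_k^{(j)}=\diag\bigl((\mathbf G_{0,k}^{(j)})^{1/2},((\mathbf G_{i,k}^{(j)})^{1/2})_{i\in J_j}\bigr)$ and $a_k^{(j)}=\xi_k^{(j)}$.

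The part that requires care is the bookkeeping in the first step: one must make sure that the combinatorial data of Assumption~\ref{ass:bound.cons} (which index is the lower and which the upper half of an annulus, and that both halves of a pair are assigned to the same clique because they share the variable set $T_q$) transfer faithfully to each $I_j$, and that the covering hypothesis on $I_j$ is exactly the condition needed to feed into Proposition~\ref{prop:suff.cond.feas}. Beyond that, no estimates are needed that are not already contained in the proof of the dense Proposition~\ref{prop:suff.cond.feas}.
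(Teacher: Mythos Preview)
Your proposal is correct and follows essentially the same approach as the paper. The paper's proof in Appendix~\ref{sec:proof.prop:suff.cond.feas.cs} simply re-runs the explicit construction from the proof of Proposition~\ref{prop:suff.cond.feas} on each clique $I_j$ (defining the weight vector $\mathbf u^{(j)}$, applying Lemma~\ref{lem:equality}, and writing down the diagonal matrices), rather than invoking the dense proposition as a black box; your reduction to Proposition~\ref{prop:suff.cond.feas} is the natural abstraction of that argument, and your remark about annulus pairs not being split across cliques is exactly the implicit assumption the paper's proof also relies on when it writes $\delta_i g_i+\delta_{i+r}g_{i+r}$ for $i\in J_j\cap[r]$.
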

The proof of Proposition \ref{prop:suff.cond.feas.cs} is postponed to Appendix \ref{sec:proof.prop:suff.cond.feas.cs}.

\subsection{Main algorithm}

Algorithm \ref{alg:sol.nonsmooth.hier.B.cs} below solves POP \eqref{eq:POP.def} with CS and whose CTP can be verified by LP.

    \begin{algorithm}
    \caption{SpecialPOP-CTP-CS}
    \label{alg:sol.nonsmooth.hier.B.cs} 
    \small
    \textbf{Input:} POP \eqref{eq:POP.def} with CS and a relaxation order $k\in\N^{\ge k_{\min}}$\\
    \textbf{Output:} The optimal value $\tau_k^{\text{cs}}$ of SDP \eqref{eq:SDP.form.sparse}
    \begin{algorithmic}[1]
    
    \For {$j\in[p]$}{} 
    \State Solve LP
    \eqref{eq:find.CTP.cliq} to obtain an optimal solution $(\xi_k^{(j)},\mathbf G_{i,k}^{(j)},\mathbf u_{j,k}^{(j)})$;
    \State Let $a_k^{(j)}=\xi_k^{(j)}$ and $\mathbf P_k^{(j)}=\diag((\mathbf G_{0,k}^{(j)})^{1/2},\dots,(\mathbf G_{m,k}^{(j)})^{1/2})$;
    \EndFor
  \State Compute the optimal value $\tau_k^{\text{cs}}$ of SDP \eqref{eq:SDP.form.sparse}  by running an algorithm based on first-order methods and which exploits
  CTP.
    \end{algorithmic}
    \end{algorithm}

In Step 4 of Algorithm \ref{alg:sol.nonsmooth.hier.B.cs} 
the two algorithms CGAL (Algorithm \ref{alg:CGAL.blocks} in Appendix \ref{sec:sdp.ctp.blocks} or SM (Algorithm \ref{alg:sol.SDP.CTP.0.blocks} in Appendix \ref{app:spectral.SDP.ctp}) are good candidates.
    

\section{Numerical experiments}
\label{sec:benchmark}
In this section we report results of numerical experiments obtained by solving the 
second-order Moment-SOS relaxation of various randomly generated instances 
of QCQPs with CTP. 
The experiments are performed in Julia 1.3.1 with the following software packages:
\begin{itemize}
    \item {\tt SumOfSquare} \cite{weisser2019polynomial} is a modeling library for solving the Moment-SOS relaxations of dense POPs, based on JuMP (with Mosek 9.1 used as SDP solver).
    \item {\tt TSSOS} \cite{wang2019tssos,wang2020chordal,wang2020cs} is a modeling library for solving Moment-SOS relaxations of sparse POPs based on JuMP (with Mosek 9.1 used as SDP solver).
    \item {\tt LMBM} solves unconstrained non-smooth optimization with the limited-memory bundle method by Haarala et al. \cite{haarala2007globally,haarala2004new}
    and calls Karmitsa's Fortran implementation of the LMBM algorithm  \cite{karmitsa2007lmbm}.
    \item {\tt Arpack} \cite{lehoucq1998arpack} is used to compute the smallest eigenvalues and the corresponding eigenvectors of real symmetric matrices of (potentially) large size, which is based on the implicitly restarted Arnoldi method.
\end{itemize}

The implementation of algorithms \ref{alg:sol.nonsmooth.hier.B} and \ref{alg:sol.nonsmooth.hier.B.cs} is available online via the link:
\begin{center}
    \href{https://github.com/maihoanganh/SpectralPOP}{{\bf https://github.com/maihoanganh/ctpPOP}}.
\end{center}

We use a desktop computer with an Intel(R) Core(TM) i7-8665U CPU @ 1.9GHz $\times$ 8 and 31.2 GB of RAM. 
The notation for the numerical results is given in Table \ref{tab:nontation}.
\begin{table}
    \caption{\small The notation}
    \label{tab:nontation}
\small
\begin{center}
\begin{tabular}{|m{1.0cm}|m{10.3cm}|}
\hline
$n$&the number of variables of a POP\\
\hline
$m$&the number of inequality constraints of a POP\\
\hline
$l$&the number of equality constraints of a POP\\
\hline
$u^{\max}$& the largest size of variable cliques of a sparse POP\\
\hline
$p$& the number of variable cliques of a sparse POP\\
\hline
$k$&the relaxation order of the Moment-SOS hierarchy\\
\hline
$t$&the sparse order of the sparsity adapted Moment-SOS hierarchy (for TS and CS-TS)\\
\hline
$\omega$& the number of psd blocks in an SDP\\
\hline
$s^{\max}$&the largest size of psd blocks in an SDP\\
\hline
$\zeta$&the number of affine equality constraints in an SDP\\
\hline
$a^{\max}$& the largest constant trace\\
\hline
Mosek & the SDP relaxation modeled by {\tt SumOfSquares} (for dense POPs) or {\tt TSSOS} (for sparse POPs) and solved by Mosek 9.1\\
\hline
CGAL & the SDP relaxation modeled by our CTP-exploiting method and solved by the CGAL algorithm\\
\hline
LMBM &the SDP relaxation modeled by our CTP-exploiting method and 
solved by the SM algorithm with the {\tt LMBM} solver\\
\hline
val& the optimal value of the SDP relaxation\\
\hline
gap& the relative optimality gap w.r.t. the value returned by Mosek, i.e.,

$\text{gap}=|\text{val}-\text{val(Mosek)}|/{|\text{val(Mosek)}|}$\\
\hline
time & the running time in seconds (including  modeling and solving time)\\
\hline
$-$& the calculation runs out of space\\
\hline
\end{tabular}    
\end{center}
\end{table}

For the examples tested in this paper, the modeling time of {\tt SumOfSquares}, {\tt TSSOS} and {\tt ctpPOP} is typically negligible compared to the solving time of the 
packages Mosek, CGAL, and LMBM. Hence the total running time mainly depends on the solvers and we compare their performances below.
As mentioned in the introduction, the current framework differs from our previous work \cite{mai2020hierarchy}, where we exploited CTP for equality constrained POPs on a sphere, which could be solved by {\tt LMBM} efficiently.
The reason is that the SDP relaxations of such equality constrained POPs involve a single psd matrix. 
For the benchmarks of this section, we consider POPs involving ball/annulus constraints, so the resulting relaxations include several psd matrices.
Our numerical experiments confirm that for such SDPs, LMBM returns inaccurate values while CGAL (without sketching) performs better for this type of SDP in terms of accuracy and efficiency.

\subsection{Randomly generated dense QCQPs with a ball constraint}
\label{sec:experiment.single.ball}
\paragraph{Test problems:} We construct randomly generated dense QCQPs with a ball constraint as follows:
\begin{enumerate}
    \item Generate a dense quadratic polynomial objective function $f$ with random coefficients following the uniform probability distribution on $(-1,1)$.
        \item Let $m=1$ and $g_1:=1-\|\mathbf x\|_2^2$;
    \item Take a random point $\mathbf a$ in $S(g)$ w.r.t. the uniform distribution;
    \item For every $j\in[l]$, generate a dense quadratic polynomial $h_j$ by
    \begin{enumerate}[(i)]
        \item for each $\alpha\in\N^n_2\backslash \{\mathbf 0\}$, taking a random coefficient $h_{j,\alpha}$ for $h_j$ in $(-1,1)$ w.r.t. the uniform distribution;
        \item setting $h_{j,\mathbf 0}:=-\sum_{\alpha\in\N^n_2\backslash \{\mathbf 0\}} h_{j,\alpha} \mathbf a^\alpha$.
    \end{enumerate}
    Then $\mathbf a$ is a feasible solution of POP \eqref{eq:POP.def}.
\end{enumerate}

The numerical results are displayed in Table \ref{tab:quadratics.on.unit.ball} and \ref{tab:QCQP.on.unit.ball}. 
\begin{table}
    \caption{\small Numerical results for minimizing a dense quadratic polynomial on a unit ball}
    \label{tab:quadratics.on.unit.ball}
    {\small\begin{itemize}
        \item POP size: $m=1$, $l=0$; Relaxation order: $k=2$; SDP size: $\omega=2$, $a^{\max}=3$.
    \end{itemize}}
\small
\begin{center}
   \begin{tabular}{|c|c|c|c|c|c|c|c|c|c|}
        \hline
        \multicolumn{1}{|c|}{POP size}&
        \multicolumn{2}{c|}{SDP size} & \multicolumn{2}{c|}{Mosek }&      \multicolumn{2}{c|}{CGAL}&      \multicolumn{2}{c|}{LMBM} \\ \hline
$n$ &$s^{\max}$ & $\zeta$&
\multicolumn{1}{c|}{val}& \multicolumn{1}{c|}{time}&
\multicolumn{1}{c|}{val}&
\multicolumn{1}{c|}{time}&
\multicolumn{1}{c|}{val}&
\multicolumn{1}{c|}{time}\\
\hline
10 &66 &1277& -2.2181 & 0.3 &-2.2170  &0.2 & -2.2187 & 0.3\\
\hline
20 &231 &16402 & -3.7973 & 4 &-3.7947  &0.6 & -3.7096 & 7\\
\hline
30 &496 &77377 & -3.6876 & 3474 &-3.6858  &104 & -3.8530 & 59\\
\hline
40 &861 &236202 & $-$ & $-$ &-4.1718  &33 & -4.7730 & 179\\
\hline
50 &1326 &564877 & $-$ & $-$ &-6.3107  &1007 & -7.3874 & 139\\
\hline
60 &1891 &1155402 & $-$ & $-$ &-6.5326  &1085 & -7.4733 & 674\\
\hline
70 &2556 &2119777 & $-$ & $-$ &-7.3379  &1262 & -9.5223 & 1486\\
\hline
80 &3321 &3590002 & $-$ & $-$ &-7.9559  &4988 & -10.0260 & 1241\\
\hline
90 &4186 &5718077 & $-$ & $-$ &-7.3425  &5187 & -9.4477 & 5313\\
\hline
100 &5151 &8676002 & $-$ & $-$ &-7.7374  &22451 & -10.684 & 5355\\
\hline
\end{tabular}    
\end{center}
\end{table}
\begin{table}
    \caption{\small Numerical results for randomly generated dense QCQPs with a ball constraint}
    \label{tab:QCQP.on.unit.ball}
    {\small\begin{itemize}
        \item POP size: $m=1$, $l=\lceil n/4\rceil $; Relaxation order: $k=2$; SDP size: $\omega=2$, $a^{\max}=3$.
    \end{itemize}}
\small
\begin{center}
   \begin{tabular}{|c|c|c|c|c|c|c|c|c|c|}
        \hline
        \multicolumn{2}{|c|}{POP size}&
        \multicolumn{2}{c|}{SDP size} & \multicolumn{2}{c|}{Mosek}&      \multicolumn{2}{c|}{CGAL}&      \multicolumn{2}{c|}{LMBM} \\ \hline
$n$&$l$ &$s^{\max}$ & $\zeta$&
\multicolumn{1}{c|}{val}& \multicolumn{1}{c|}{time}&
\multicolumn{1}{c|}{val}&
\multicolumn{1}{c|}{time}&
\multicolumn{1}{c|}{val}&
\multicolumn{1}{c|}{time}\\
\hline
10&3   & 66 & 1475 & -2.0686 & 1.7 &-2.0674 & 0.8 & -2.0874 & 0.3\\
\hline
20&5   & 231 & 17557 & -3.0103 & 61 &-3.0075 & 7 & -3.0750 & 18\\
\hline
30&8   & 496 & 81345 & -3.3293 & 4573 &-3.3249 &80 & -3.6863 & 123\\
\hline
40&10   & 861 & 244812 & $-$ & $-$ & -4.6977 & 194 & -5.3488  & 488 \\
\hline
50&13   & 1326 & 582115 & $-$ & $-$ & -4.2394 &951  & -6.1325 & 837\\
\hline
60&15   & 1891 & 1183767 & $-$ & $-$ & -5.7793 & 1387 & -7.5718 & 3781\\
\hline
70&18   & 2556 & 2165785 & $-$ & $-$ & -6.1278 & 4335 & -8.1181 & 15854\\
\hline
\end{tabular}    
\end{center}
\end{table}

\paragraph{Discussion:}
As one can see from Table \ref{tab:quadratics.on.unit.ball} and \ref{tab:QCQP.on.unit.ball},
CGAL is typically the fastest solver and returns an optimal value of gap within 1\% w.r.t. the one returned by Mosek when $n\le 30$. Mosek runs out of memory when $n\ge 40$ while CGAL works well up to $n=100$.
We should point out that LMBM is less accurate or even fails to converge to the optimal value when $n\ge 20$. The reason might be that LMBM only solves the dual problem and hence looses information of the primal problem.

\subsection{Randomly generated dense QCQPs with annulus constraints}
\label{sec:experiment.single.annulus}
\paragraph{Test problems:} We construct randomly generated dense QCQPs
as in Section \ref{sec:experiment.single.ball}, where the ball constraint is now replaced by annulus constraints. Namely, in Step 2 we take $m=2$, $g_1:=\|\mathbf x\|_2^2-1/2$ and $g_2:=1-\|\mathbf x\|_2^2$.
The numerical results are displayed in Table \ref{tab:quadratics.on.annulus} and \ref{tab:QCQP.on.annulus}. 
\begin{table}
    \caption{\small Numerical results for minimizing a dense quadratic polynomial on an annulus}
    \label{tab:quadratics.on.annulus}
    {\small\begin{itemize}
        \item POP size: $m=2$, $l=0$; Relaxation order: $k=2$; SDP size: $\omega=3$, $a^{\max}=4$.
    \end{itemize}}
\small
\begin{center}
   \begin{tabular}{|c|c|c|c|c|c|c|c|c|}
        \hline
        \multicolumn{1}{|c|}{POP size}&
        \multicolumn{2}{c|}{SDP size} & \multicolumn{2}{c|}{Mosek}&      \multicolumn{2}{c|}{CGAL}&      \multicolumn{2}{c|}{LMBM} \\ \hline
$n$ &$s^{\max}$ & $\zeta$&
\multicolumn{1}{c|}{val}& \multicolumn{1}{c|}{time}&
\multicolumn{1}{c|}{val}&
\multicolumn{1}{c|}{time}&
\multicolumn{1}{c|}{val}&
\multicolumn{1}{c|}{time}\\
\hline
10 &66 &1343 & -3.0295 & 0.5 &-3.0278 &1 &-3.0311&0.8  \\
\hline
20 &231 &16633 & -3.6468 & 69 &-3.6458 &5 &-3.7814&16  \\
\hline
30 &496 &77873 & -3.9108 & 2546 &-3.9079 &9 &-3.8941&51  \\
\hline
40 &861 &237063 & $-$ & $-$ &-4.7469 &28 &-6.9780&119  \\
\hline
50 &1326 &566203 & $-$ & $-$ &-6.4170 &112 &-11.1028&258  \\
\hline
60 &1891 &1157293 & $-$ & $-$ &-5.5841 &226 &-9.2142&473  \\
\hline
70 &2556 &2122333 & $-$ & $-$ &-7.9325 &730 & -12.7862& 1669 \\
\hline
80 &3321 &3593323 & $-$ & $-$ &-7.6164 &1355 & -10.068 & 317 \\
\hline
90 &4186 &5722263 & $-$ & $-$ &-8.1900 &3563 & -12.439 & 8751 \\
\hline
\end{tabular}    
\end{center}
\end{table}
\begin{table}
    \caption{\small Numerical results for randomly generated dense QCQPs with annulus constraints}
    \label{tab:QCQP.on.annulus}
    {\small\begin{itemize}
        \item POP size: $m=2$, $l=\lceil n/4\rceil$; Relaxation order: $k=2$; SDP size: $\omega=3$, $a^{\max}=4$.
    \end{itemize}}
\small
\begin{center}
   \begin{tabular}{|c|c|c|c|c|c|c|c|c|c|}
        \hline
        \multicolumn{2}{|c|}{POP size} &
        \multicolumn{2}{c|}{SDP size} & \multicolumn{2}{c|}{Mosek}&      \multicolumn{2}{c|}{CGAL}&      \multicolumn{2}{c|}{LMBM} \\ \hline
$n$&$l$ &$s^{\max}$ & $\zeta$&
\multicolumn{1}{c|}{val}& \multicolumn{1}{c|}{time}&
\multicolumn{1}{c|}{val}&
\multicolumn{1}{c|}{time}&
\multicolumn{1}{c|}{val}&
\multicolumn{1}{c|}{time}\\
\hline
10&3   & 66 & 1541 & -2.7950 & 0.5 &-2.7934 & 2&-2.7829&7\\
\hline
20&5   & 231 & 17788 & -3.5048  & 95  &-3.5027 & 10& -4.4491 & 46\\
\hline
30&8   & 496 & 81841 & -3.3964  &  4237  &-3.3937 & 45& -4.9592 & 111\\
\hline
40&10  & 861 & 245673 & $-$  &  $-$  &-4.6573 & 140 & -6.7683 & 648\\
\hline
50&13  & 1326 & 583441 & $-$ & $-$ & -3.8236 & 437 & -6.9930 & 519\\
\hline
60&15 & 1891 & 1185658 & $-$ & $-$ & -4.5246 & 1076 & -7.5845 &2917 \\
\hline
70&18 & 2556 & 2168341 & $-$ & $-$ & -6.2924 & 4783 & -9.6145 & 2644 \\
\hline
\end{tabular}    
\end{center}
\end{table}

\paragraph{Discussion:} Same remarks as in Section \ref{sec:experiment.single.ball}.

\subsection{Randomly generated dense QCQPs with box constraints}
\label{sec:experiment.box}
\paragraph{Test problems:} We construct randomly generated dense QCQPs
as in Section \ref{sec:experiment.single.ball}, where the ball constraint is now replaced by box constraints. Namely, in Step 2 we take $m=n$, $g_j:=-x_j^2+1/n$, $j\in [n]$. 

The numerical results are displayed in Table
\ref{tab:quadratics.on.box} and \ref{tab:QCQP.on.box}. 
\begin{table}
    \caption{\small Numerical results for minimizing a dense quadratic polynomial on a box}
    \label{tab:quadratics.on.box}
    {\small\begin{itemize}
        \item POP size: $m=n$, $l=0$; Relaxation order: $k=2$; SDP size: $\omega=n+1$, $a^{\max}=3$.
    \end{itemize}}
\small
\begin{center}
   \begin{tabular}{|c|c|c|c|c|c|c|c|c|c|}
        \hline
        \multicolumn{1}{|c|}{POP size} &
        \multicolumn{2}{c|}{SDP size} & \multicolumn{2}{c|}{Mosek}&      \multicolumn{2}{c|}{CGAL}&      \multicolumn{2}{c|}{LMBM} \\ \hline
$n$ &$s^{\max}$ & $\zeta$&
\multicolumn{1}{c|}{val}& \multicolumn{1}{c|}{time}&
\multicolumn{1}{c|}{val}&
\multicolumn{1}{c|}{time}&
\multicolumn{1}{c|}{val}&
\multicolumn{1}{c|}{time}\\
\hline
10& 66 &1871 & -2.7197 & 0.5 &-2.7189 & 1 &-2.7327 &0.7  \\
\hline
20& 231 & 20791 & -3.3560 & 98 & -3.3501 & 57 & -4.2987 & 18 \\
\hline
30& 496 & 91761 & -4.6372 & 5150 & -4.6242 & 285 & -5.8805 & 156 \\
\hline
40& 861 &269781 & $-$ & $-$ & -4.5788 & 409 & -6.5857 & 188 \\
\hline
50& 1326 &629851 & $-$ & $-$ & -4.2313 & 2083 & -6.6163  & 323 \\
\hline
60& 1891 &1266971 & $-$ & $-$ & -4.0135 & 5525 & -6.5792  & 814 \\
\hline
70& 2556 &2296141 & $-$ & $-$ & -5.4019 & 15172 &  -8.7669 & 1434 \\
\hline
\end{tabular}    
\end{center}
\end{table}
\begin{table}
    \caption{\small Numerical results for randomly generated dense QCQPs with box constraints}
    \label{tab:QCQP.on.box}
    {\small\begin{itemize}
        \item POP size: $m=n$, $l=\lceil n/7\rceil $; Relaxation order: $k=2$; SDP size: $\omega=n+1$, $a^{\max}=3$.
    \end{itemize}}
\small
\begin{center}
   \begin{tabular}{|c|c|c|c|c|c|c|c|c|c|}
        \hline
        \multicolumn{2}{|c|}{POP size}&
        \multicolumn{2}{c|}{SDP size} & \multicolumn{2}{c|}{Mosek}&      \multicolumn{2}{c|}{CGAL}&      \multicolumn{2}{c|}{LMBM} \\ \hline
$n$&$l$ &$s^{\max}$ & $\zeta$&
\multicolumn{1}{c|}{val}& \multicolumn{1}{c|}{time}&
\multicolumn{1}{c|}{val}&
\multicolumn{1}{c|}{time}&
\multicolumn{1}{c|}{val}&
\multicolumn{1}{c|}{time}\\
\hline
10& 2  & 66 & 2003 & -1.8320 & 0.6 & -1.8321 & 3 & -1.9692 & 4\\
\hline
20& 3  & 231 & 21484 & -3.1797 & 175 & -3.1781 & 106 & -4.0216 & 29\\
\hline
30& 5  & 496 & 94241 & -2.2949 & 6850 & -2.2982 & 528 & -3.9900 & 152\\
\hline
40& 6  & 861 & 274947 & $-$ & $-$ & -3.8651 & 933 & -6.1379 & 298\\
\hline
50& 8  & 1326 & 640459 & $-$ & $-$ & -3.6267 & 6159 & -6.3651 & 1494\\
\hline
\end{tabular}    
\end{center}
\end{table}

\paragraph{Discussion:} We observe similar behaviors of the solvers as in Section \ref{sec:experiment.single.ball}. The important point to note here is that solving a QCQP with box constraints is less efficient than solving the same one with ball constraints.
This is because the efficiency of CGAL depends on the number of psd blocks involved in SDP.
For instance, when $n=50$, CGAL takes around 1000 seconds to solve the second-order moment relaxation of a QCQP with a ball constraint while it takes around 2100 seconds to solve this relaxation for a QCQP with box constraints.

\subsection{Randomly generated dense QCQPs with simplex constraints}
\label{sec:experiment.simplex}
\paragraph{Test problems:} We construct randomly generated dense QCQPs
as in Section \ref{sec:experiment.single.ball}, where the ball constraint is now replaced by simplex constraints. Namely, in Step 2 we take $g$ such that \eqref{eq:stand.simplex} holds with $L=R=1$. 
The numerical results are displayed in Table \ref{tab:quadratics.on.simplex} and \ref{tab:QCQP.on.simplex}. 
\begin{table}
    \caption{\small Numerical results for minimizing a dense quadratic polynomials on a simplex}
    \label{tab:quadratics.on.simplex}
    {\small\begin{itemize}
        \item POP size: $m=n+2$, $l=0$; Relaxation order: $k=2$; SDP size: $\omega=n+3$, $a^{\max}=5$.
    \end{itemize}}
\small
\begin{center}
   \begin{tabular}{|c|c|c|c|c|c|c|c|c|c|}
        \hline
        \multicolumn{1}{|c|}{POP size}&
        \multicolumn{2}{c|}{SDP size} & \multicolumn{2}{c|}{Mosek}&      \multicolumn{2}{c|}{CGAL}&      \multicolumn{2}{c|}{LMBM} \\ \hline
$n$ &$s^{\max}$ & $\zeta$&
\multicolumn{1}{c|}{val}& \multicolumn{1}{c|}{time}&
\multicolumn{1}{c|}{val}&
\multicolumn{1}{c|}{time}&
\multicolumn{1}{c|}{val}&
\multicolumn{1}{c|}{time}\\
\hline
10 &66 &2003 & -1.9954 & 0.3 &-1.9950 &7 &-2.2800& 27 \\
\hline
20 &231 &21253 & -1.5078 & 58 &-1.5055 &116 &-2.7237& 32 \\
\hline
30 &496 &92753 & -2.0537 & 2804 &-2.0480 &377 & -3.3114 & 917 \\
\hline
40 &861 &271503 & $-$ & $-$ &-2.3034 &950 & -4.0971 & 577 \\
\hline
50 &1326 &632503 & $-$ & $-$ & -1.8366 & 9539 & -4.0541  & 13700 \\
\hline
\end{tabular}    
\end{center}
\end{table}
\begin{table}
    \caption{\small Numerical results for randomly generated dense QCQPs with simplex constraints}
    \label{tab:QCQP.on.simplex}
    {\small\begin{itemize}
        \item POP size: $m=n+2$, $l=\lceil n/7\rceil $; Relaxation order: $k=2$; SDP size: $\omega=n+3$, $a^{\max}=5$.
    \end{itemize}}
\small
\begin{center}
   \begin{tabular}{|c|c|c|c|c|c|c|c|c|c|}
        \hline
        \multicolumn{2}{|c|}{POP size}&
        \multicolumn{2}{c|}{SDP size} & \multicolumn{2}{c|}{Mosek}&      \multicolumn{2}{c|}{CGAL}&      \multicolumn{2}{c|}{LMBM} \\ \hline
$n$&$l$ &$s^{\max}$ & $\zeta$&
\multicolumn{1}{c|}{val}& \multicolumn{1}{c|}{time}&
\multicolumn{1}{c|}{val}&
\multicolumn{1}{c|}{time}&
\multicolumn{1}{c|}{val}&
\multicolumn{1}{c|}{time}\\
\hline
10 & 2 & 66 & 2135 & -1.0605 & 0.4 & -1.0606 & 176 & -2.2338 & 2 \\
\hline
20 & 3 & 231 & 21946 & -1.6629 & 72 & -1.6628 & 512 & -3.3538 & 93 \\
\hline
30 & 5 & 496 & 95233 & -1.0091 & 6206 & -1.0249 & 1089 & -2.9425 & 100 \\
\hline
40 & 6 & 861 & 276669 & $-$ & $-$ & -0.3256 & 2314 & -2.9564 & 4431\\
\hline
50 & 8 & 1326 & 643111 & $-$ & $-$ & -1.4200 & 10035 & -5.4284 & 1310\\
\hline
\end{tabular}    
\end{center}
\end{table}

\paragraph{Discussion:} Again we observe a behavior of the solvers similar to that
in Section \ref{sec:experiment.single.ball}.
One can also see that solving a QCQP with simplex constraints by CGAL is significantly slower than solving the same one with box constraints. 
For instance, when $n=50$, CGAL takes 2100 seconds to solve the second-order moment relaxation for a QCQP with box constraints while it takes 9500 seconds with simplex constraints.

\subsection{Randomly generated QCQPs with TS and ball constraints}
\label{sec:experiment.single.ball.term}
\paragraph{Test problems:} 
We construct randomly generated QCQPs with TS and a ball constraint as follows:
\begin{enumerate}
    \item Generate a quadratic polynomial objective function $f$  such that for $\alpha\in\N^n_2$ with $|\alpha|\ne 2$, $f_\alpha=0$ and for $\alpha\in\N^n_2$ with $|\alpha|=2$, the coefficient $f_\alpha$ is randomly generated in $(-1,1)$ w.r.t. the uniform distribution;
    \item Take $m=1$ and $g_1:=1-\|\mathbf x\|_2^2$;
    \item Take a random point $\mathbf a$ in $S(g)$ w.r.t. the uniform distribution;
    \item For every $j\in[l]$, generate a quadratic polynomial $h_j$ by
    \begin{enumerate}[(i)]
        \item setting $h_{j,\alpha}=0$ for each $\alpha\in\N^n_2\backslash \{\mathbf 0\}$ with $|\alpha|\ne 2$;
        \item for each $\alpha\in\N^n_2\backslash \{\mathbf 0\}$ with $|\alpha|=2$, taking a random coefficient $h_{j,\alpha}$ for $h_j$ in $(-1,1)$ w.r.t. the uniform distribution;
        \item setting $h_{j,\mathbf 0}:=-\sum_{\alpha\in\N^n_2\backslash \{\mathbf 0\}} h_{j,\alpha} \mathbf a^\alpha$.
    \end{enumerate}
    Then $\mathbf a$ is a feasible solution of POP \eqref{eq:POP.def}.
\end{enumerate}

The numerical results are displayed in Table \ref{tab:quadratics.on.unit.ball.term} and \ref{tab:QCQP.on.unit.ball.term}. 
\begin{table}
    \caption{\small Numerical results for minimizing a random quadratic polynomial with TS on the unit ball}
    \label{tab:quadratics.on.unit.ball.term}
    {\small\begin{itemize}
        \item POP size: $m=1$, $l=0$;  Relaxation order: $k=2$;  Sparse order: $t=1$; SDP size: $\omega=4$, $a^{\max}=3$.
    \end{itemize}}
\small
\begin{center}
   \begin{tabular}{|c|c|c|c|c|c|c|c|c|c|c|}
        \hline
        \multicolumn{1}{|c|}{POP size}&
        \multicolumn{2}{c|}{SDP size} & \multicolumn{2}{c|}{Mosek}&      \multicolumn{2}{c|}{CGAL}&      \multicolumn{2}{c|}{LMBM}\\ \hline
$n$ &$s^{\max}$ & $\zeta$&
\multicolumn{1}{c|}{val}& \multicolumn{1}{c|}{time}&
\multicolumn{1}{c|}{val}&
\multicolumn{1}{c|}{time}&
\multicolumn{1}{c|}{val}&
\multicolumn{1}{c|}{time}\\
\hline
10 & 56 & 937 & -1.5681 & 4 & -1.5527 & 0.7 & -1.5711  & 0.07\\
\hline
20 & 211 & 13722 & -2.4275 & 36 & -2.3996 & 1 & -2.7301 & 0.6 \\
\hline
30 & 466 & 68357 & -3.0748 & 1930 & -3.0577 & 8 & -3.5188 & 8  \\
\hline
40 & 821 & 214842 & $-$ & $-$ & -3.6999 & 20 & -4.9033 & 40 \\
\hline
50 & 1276 & 523177 & $-$ & $-$ & -4.1603 & 128 & -5.3416 & 59 \\
\hline
60 & 1831 & 1083362 & $-$ & $-$ & -4.1914 & 655 & -5.6983 & 303 \\
\hline
70 & 2486 & 2005397 & $-$ & $-$ & -4.9578 & 1461 & -7.1968 & 1040 \\
\hline
80 & 3241 & 3419282 & $-$ & $-$ & -5.6452 & 7253 & -7.9133 & 5759 \\
\hline
\end{tabular}    
\end{center}
\end{table}
\begin{table}
    \caption{\small Numerical results for randomly generated QCQPs with TS and a ball constraint}
    \label{tab:QCQP.on.unit.ball.term}
    {\small\begin{itemize}
        \item POP size: $m=1$, $l=\lceil n/4\rceil $; Relaxation order: $k=2$;  Sparse order: $t=1$; SDP size: $\omega=4$, $a^{\max}=3$.
    \end{itemize}}
\small
\begin{center}
   \begin{tabular}{|c|c|c|c|c|c|c|c|c|c|}
        \hline
        \multicolumn{2}{|c|}{POP size}&
        \multicolumn{2}{c|}{SDP size} & \multicolumn{2}{c|}{Mosek}&      \multicolumn{2}{c|}{CGAL}& \multicolumn{2}{c|}{LMBM} \\ \hline
$n$&$l$ &$s^{\max}$ & $\zeta$&
\multicolumn{1}{c|}{val}& \multicolumn{1}{c|}{time}&
\multicolumn{1}{c|}{val}&
\multicolumn{1}{c|}{time}&
\multicolumn{1}{c|}{val}&
\multicolumn{1}{c|}{time}\\
\hline
10 &3 & 56 & 1105 & -0.60612 & 0.7 & -0.60550 & 2  & -0.60611 & 0.8\\
\hline
20 &5 & 211 & 14777 & -2.3115 & 47 & -2.3097 & 17& -2.3952 & 3 \\
\hline
30 &8 & 466 & 72085 & -2.8344 & 3102 &  -2.8321 & 112 &  -3.7588 & 128 \\
\hline
40 &10 & 821 & 223052 & $-$ & $-$ & -3.4081  & 476 & -4.4239  & 673 \\
\hline
50 &13 & 1276 & 539765 & $-$ & $-$ & -3.3552  & 1845 & -5.2568  & 729 \\
\hline
60 &15 & 1831 & 1110827 & $-$ & $-$ & -3.5620  & 2992 & -5.9898 & 1702 \\
\hline
\end{tabular}    
\end{center}
\end{table}
\paragraph{Discussion:}
The behavior of solvers is  similar to that in the dense case.

\subsection{Randomly generated QCQPs with TS and box constraints}
\label{sec:experiment.box.term}
\paragraph{Test problems:} We construct randomly generated QCQPs with TS as in Section \ref{sec:experiment.single.ball.term}, where the ball constraint is now replaced by box constraints. 
The numerical results are displayed in Table \ref{tab:quadratics.on.box.term} and \ref{tab:QCQP.on.box.term}. 
\begin{table}
    \caption{\small Numerical results for minimizing a random quadratic polynomial with TS on a box}
    \label{tab:quadratics.on.box.term}
    {\small\begin{itemize}
        \item POP size: $m=n$, $l=0$; Relaxation order: $k=2$;  Sparse order: $t=1$; SDP size: $a^{\max}=3$.
    \end{itemize}}
\small
\begin{center}
   \begin{tabular}{|c|c|c|c|c|c|c|c|c|c|}
        \hline
        \multicolumn{1}{|c|}{POP size} &
        \multicolumn{3}{c|}{SDP size} & \multicolumn{2}{c|}{Mosek}&      \multicolumn{2}{c|}{CGAL}&      \multicolumn{2}{c|}{LMBM} \\ \hline
$n$ &$\omega$&$s^{\max}$ & $\zeta$&
\multicolumn{1}{c|}{val}& \multicolumn{1}{c|}{time}&
\multicolumn{1}{c|}{val}&
\multicolumn{1}{c|}{time}&
\multicolumn{1}{c|}{val}&
\multicolumn{1}{c|}{time}\\
\hline
10 & 22 & 56 &  1441 & -1.0539 & 3 & -1.0519 & 14 & -1.11671 & 1 \\
\hline
20 & 42 & 211 &  17731 & -1.3925 & 93 & -1.3802 & 161 & -2.2978 & 2   \\
\hline
30 & 62 & 466 &  81871 & -2.2301 & 4392 & -2.2128 & 567 & -2.4544 &  533 \\
\hline
40 & 82 & 821 &  246861 & $-$ & $-$ & -2.5209 &  1602 & -4.6159 & 1036\\
\hline
50 & 102 & 1276 &  585701 & $-$ & $-$ & -3.0282 &  2583 & -4.9146 & 376\\
\hline
60 & 122 & 1831 &  1191391 & $-$ & $-$ & -3.0470 & 10858  & -5.7882 & 353\\
\hline
\end{tabular}    
\end{center}
\end{table}
\begin{table}
    \caption{\small Numerical results for randomly generated QCQPs with TS and box constraints}
    \label{tab:QCQP.on.box.term}
    {\small\begin{itemize}
        \item POP size: $m=n$, $l=\lceil n/7\rceil $; Relaxation order: $k=2$;  Sparse order: $t=1$.; SDP size: $\omega=n+1$, $a^{\max}=3$.
    \end{itemize}}
\small
\begin{center}
   \begin{tabular}{|c|c|c|c|c|c|c|c|c|c|c|}
        \hline
        \multicolumn{2}{|c|}{POP size}&
        \multicolumn{3}{c|}{SDP size} & \multicolumn{2}{c|}{Mosek}&      \multicolumn{2}{c|}{CGAL}&      \multicolumn{2}{c|}{LMBM} \\ \hline
$n$&$l$&$\omega$ &$s^{\max}$ & $\zeta$&
\multicolumn{1}{c|}{val}& \multicolumn{1}{c|}{time}&
\multicolumn{1}{c|}{val}&
\multicolumn{1}{c|}{time}&
\multicolumn{1}{c|}{val}&
\multicolumn{1}{c|}{time}\\
\hline
10&2 & 22&56 & 1553 & -0.77189 & 0.2 & -0.77214& 9 & -0.78092 & 1\\
\hline
20&3 & 42& 211 & 18364 & -1.7962 & 71 & -1.8009 & 150 & -2.7771 & 4\\
\hline
30&5 & 62& 466 & 84201 & -1.8529 & 5814 & -1.8625 & 650 & -3.5891 & 268 \\
\hline
40&6 & 82& 821 & 251787 & $-$ & $-$ & -2.1930 & 2994 & -4.5890 & 317\\
\hline
50&8 & 102& 1276 & 595909 & $-$ & $-$ & -2.4655 &  8397 & -5.1811 & 883 \\
\hline
\end{tabular}    
\end{center}
\end{table}
\paragraph{Discussion:}
Again the behavior of solvers is similar to that  in the dense case.

\subsection{Randomly generated QCQPs with CS and ball constraints on each clique of variables}
\label{sec:experiment.single.ball.sparse}

\paragraph{Test problems:} We construct randomly generated QCQPs with CS and ball constraints on each clique of variables as follows:
\begin{enumerate}
    \item Take a positive integer $u$, $p:=\lfloor n/u\rfloor +1$ and let
    \begin{equation}\label{eq:index.vars}
        I_j=\begin{cases}
        [u],&\text{if }j=1\,,\\
        \{u(j-1),\dots,uj\},&\text{if }j\in\{2,\dots,p-1\}\,,\\
        \{u(p-1),\dots,n\},&\text{if }j=p\,;
        \end{cases}
    \end{equation}
    \item Generate a quadratic polynomial objective function $f=\sum_{j\in[p]}f_j$ such that for each $j\in[p]$, $f_j\in\R[\mathbf x(I_j)]_2$, and the coefficient $f_{j,\alpha},\alpha\in\N^{I_j}_2$ of $f_j$ is randomly generated in $(-1,1)$ w.r.t. the uniform distribution;
    \item Take $m=p$ and $g_j:=-\|\mathbf x(I_j)\|_2^2+1$, $j\in[m]$;
    \item Take a random point $\mathbf a$ in $S(g)$ w.r.t. the uniform distribution;
    \item Let $r:=\lfloor l/p\rfloor$ and \begin{equation}\label{eq:assign.eqcons}
        W_j:=\begin{cases} \{(j-1)r+1,\dots,jr\},&\text{if }j\in[p-1]\,,\\
    \{(p-1)r+1,\dots,l\},&\text{if } j=p\,.
    \end{cases}
    \end{equation}
     For every $j\in[p]$ and every $i\in W_j$, generate a quadratic polynomial $h_i\in\R[\mathbf x(I_j)]_2$ by
    \begin{enumerate}
        \item for each $\alpha\in\N^{I_j}_2\backslash \{\mathbf 0\}$, taking a random coefficient $h_{i,\alpha}$ of $h_i$ in $(-1,1)$ w.r.t. the uniform distribution;
        \item setting $h_{i,\mathbf 0}:=-\sum_{\alpha\in\N^{I_j}_2\backslash \{\mathbf 0\}} h_{j,\alpha} \mathbf a^\alpha$.
    \end{enumerate}
    Then $\mathbf a$ is a feasible solution of POP \eqref{eq:POP.def}.
\end{enumerate}

The numerical results are displayed in Table \ref{tab:experiment.single.ball.sparse} and \ref{tab:single.ball.sparse.QCQP}. 
\begin{table}
    \caption{\small Numerical results for minimizing a random  quadratic polynomial with CS and ball constraints on each clique of variables}
    \label{tab:experiment.single.ball.sparse}
    {\small\begin{itemize}
        \item POP size: $n=1000$, $m=p$, $l=0$, $u^{\max}=u+1$;  Relaxation order: $k=2$;
        SDP size: $\omega=2p$, $a^{\max}=3$.
    \end{itemize}}
\small
\begin{center}
   \begin{tabular}{|c|c|c|c|c|c|c|c|c|c|c|}
        \hline
        \multicolumn{2}{|c|}{POP size}&
        \multicolumn{3}{c|}{SDP size} & \multicolumn{2}{c|}{Mosek}&      \multicolumn{2}{c|}{CGAL}&      \multicolumn{2}{c|}{LMBM} \\ \hline
$u$&$p$ &$\omega$&$s^{\max}$ & $\zeta$&
\multicolumn{1}{c|}{val}& \multicolumn{1}{c|}{time}&
\multicolumn{1}{c|}{val}&
\multicolumn{1}{c|}{time}&
\multicolumn{1}{c|}{val}&
\multicolumn{1}{c|}{time}\\
\hline
 11 & 91& 182 & 91 & 222712 & -240.54 & 124 & -240.37 & 98 & -508.35 & 15\\
\hline
16 & 63& 126 & 171 & 550692 & -205.45 & 1389 & -205.19 & 280 & -429.93 & 83\\
\hline
 21 & 48 & 96 & 276 & 1107682 & $-$ & $-$ & -175.60 & 321 & -365.91 & 269\\
\hline
 26 & 39 & 78 & 406 & 1955879 & $-$ & $-$ & -165.65 & 559 & -338.00 & 225 \\
\hline
 31 & 33 & 66 & 561 & 3167072 & $-$ & $-$ & -149.10 & 973 & -305.33 & 5280 \\
\hline
 36 & 28 & 56 & 741 & 4758727 & $-$ & $-$ & -140.21 & 1315 & -285.69 & 737 \\
\hline
41 & 25 & 50 & 946 & 6839993 & $-$ & $-$ & -126.55 & 1926 & -265.28 & 622 \\
\hline
\end{tabular}    
\end{center}
\end{table}
\begin{table}
    \caption{\small Numerical results for randomly generated QCQPs with CS and ball constraints on each clique of variables}
    \label{tab:single.ball.sparse.QCQP}
    {\small\begin{itemize}
        \item POP size: $n=1000$, $m=p$, $l=143$, $u^{\max}=u+1$;  Relaxation order: $k=2$; SDP size: $\omega=2p$, $a^{\max}=3$.
    \end{itemize}}
\small
\begin{center}
   \begin{tabular}{|c|c|c|c|c|c|c|c|c|c|c|}
        \hline
        \multicolumn{2}{|c|}{POP size}&
        \multicolumn{3}{c|}{SDP size} & \multicolumn{2}{c|}{Mosek}&      \multicolumn{2}{c|}{CGAL}&      \multicolumn{2}{c|}{LMBM} \\ \hline
$u$&$p$ &$\omega$&$s^{\max}$ & $\zeta$&
\multicolumn{1}{c|}{val}& \multicolumn{1}{c|}{time}&
\multicolumn{1}{c|}{val}&
\multicolumn{1}{c|}{time}&
\multicolumn{1}{c|}{val}&
\multicolumn{1}{c|}{time}\\
\hline
11 & 91 & 182 & 91 & 235023 & -224.15 & 163 & -224.09 & 204 & -500.34 & 13 \\
\hline
16 & 63 & 126 & 171 & 572905 & -192.45 & 1830 & -192.30 & 335 & -420.87 & 50 \\
\hline
21 & 48 & 96 & 276 & 1139460 & $-$ & $-$ & -162.79 & 537 & -363.28 & 103 \\
\hline
26 & 39 & 78 & 406 & 2005124 & $-$ & $-$ & -148.77 & 1014 & -336.42 & 263 \\
\hline
31 & 33 & 66 & 561 & 3239573 & $-$ & $-$ & -142.38 & 2115 & -313.80 & 3679 \\
\hline
36 & 28 & 56 & 741 & 4862292 & $-$ & $-$ & -124.97 & 5304 & -263.77 & 6598 \\
\hline
\end{tabular}    
\end{center}
\end{table}

\paragraph{Discussion:}
The number of variables is fixed as $n=1000$. 
We increase the clique size $u$ so that the number of variable cliques $p$ decreases accordingly.
Again results in Table \ref{tab:experiment.single.ball.sparse} and \ref{tab:single.ball.sparse.QCQP} show that CGAL is the fastest solver 
and returns an optimal value of gap within 1\% w.r.t. the one returned by Mosek (for $u\le 16$). Moreover Mosek runs out of memory when $u\ge 21$, and LMBM fails to converge to the optimal value.

\subsection{Randomly generated QCQPs with CS and box constraints on each clique of variables}
\label{sec:experiment.single.box.sparse}

\paragraph{Test problems:} We construct randomly generated QCQPs
with CS as in Section \ref{sec:experiment.single.ball.sparse}, where ball constraints are now replaced by box constraints. Namely, in Step 3 we take $m=n$, $g_j:=-x_j^2+1/u$, $j\in [n]$. 

The numerical results are displayed in Table  \ref{tab:experiment.single.box.sparse} and \ref{tab:single.box.sparse.QCQP}. 
\begin{table}
    \caption{\small Numerical results for minimizing a random  quadratic polynomial with CS and box constraints on each clique of variables}
    \label{tab:experiment.single.box.sparse}
    {\small\begin{itemize}
        \item POP size: $n=m=1000$, $l=0$,  $u^{\max}=u+1$;   Relaxation order: $k=2$; Constant trace: $a^{\max}\in [3,4]$.
    \end{itemize}}
\small
\begin{center}
   \begin{tabular}{|c|c|c|c|c|c|c|c|c|c|c|}
        \hline
        \multicolumn{2}{|c|}{POP size}&
        \multicolumn{3}{c|}{SDP size} & \multicolumn{2}{c|}{Mosek}&      \multicolumn{2}{c|}{CGAL}&      \multicolumn{2}{c|}{LMBM} \\ \hline
$u$&$p$ &$\omega$&$s^{\max}$ & $\zeta$& 
\multicolumn{1}{c|}{val}& \multicolumn{1}{c|}{time}&
\multicolumn{1}{c|}{val}&
\multicolumn{1}{c|}{time}&
\multicolumn{1}{c|}{val}&
\multicolumn{1}{c|}{time}\\
\hline
11 & 91 & 1181 & 91 & 313361 &  -204.89 & 443 & -204.69 & 753 & -555.51 & 223 \\
\hline
16 & 63 & 1125 & 171 & 720323 & -163.11 & 3082 & -162.88 & 3059 & -438.22 & 119 \\
\hline
21 & 48 & 1095 & 276 & 1380918  & $-$ & $-$ & -147.92 & 5655 & -387.42 & 2213 \\
\hline
26 & 39 & 1077 & 406 & 2357161  & $-$ & $-$ & -131.00 & 8889 & -340.04 & 5346 \\
\hline
\end{tabular}    
\end{center}
\end{table}
\begin{table}
    \caption{\small Numerical results for QCQPs with CS and box constraints on each clique of variables}
    \label{tab:single.box.sparse.QCQP}
    {\small\begin{itemize}
        \item POP size: $n=m=1000$, $l=143$, $u^{\max}=u+1$; Relaxation order: $k=2$;
         Constant trace: $a^{\max}\in [3,4]$.
    \end{itemize}}
\small
\begin{center}
   \begin{tabular}{|c|c|c|c|c|c|c|c|c|c|c|}
        \hline
        \multicolumn{2}{|c|}{POP size} &
        \multicolumn{3}{c|}{SDP size} & \multicolumn{2}{c|}{Mosek}&      \multicolumn{2}{c|}{CGAL}&      \multicolumn{2}{c|}{LMBM} \\ \hline
$u$&$p$&$\omega$&$s^{\max}$ & $\zeta$&
\multicolumn{1}{c|}{val}& \multicolumn{1}{c|}{time}&
\multicolumn{1}{c|}{val}&
\multicolumn{1}{c|}{time}&
\multicolumn{1}{c|}{val}&
\multicolumn{1}{c|}{time}\\
\hline
11 &  91 & 1181 & 91 & 325672 &  -187.01  & 402 &-186.98 & 1915 & -570.68 & 184 \\
\hline
16 & 63 & 1125 & 171 & 742536  & -142.16  & 4323  & -142.27 & 4126 & -442.51 &57 \\
\hline
21 & 48 & 1095 & 276 & 1412696  & $-$  & $-$  & -131.14 & 5334 & -382.89 & 618 \\
\hline
26 & 39 & 1077 & 406 & 2406406  & $-$  & $-$  & -113.44 & 8037 &  -336.11 & 901 \\
\hline
\end{tabular}    
\end{center}
\end{table}

\paragraph{Discussion:}
The number of variables is fixed as $n=1000$.
We increase the clique size $u$ so that the number of variable cliques $p$ decreases accordingly.
From results in Table \ref{tab:experiment.single.ball.sparse} and \ref{tab:single.ball.sparse.QCQP}, one observes that
when the largest size of variable cliques is relatively small (say $u\le 11$), Mosek is the fastest solver. However when the largest size of variable cliques is relatively large (say $u\le 21$), Mosek runs out of memory while CGAL still works well. 


\subsection{Randomly generated QCQPs with CS-TS and ball constraints on each clique of variables}
\label{sec:experiment.single.ball.sparse.mix}
\paragraph{Test problems:} 
We construct randomly generated QCQPs with CS-TS and ball constraints on each clique of variables as follows:
\begin{enumerate}
    \item Take a positive integer $u$, $p:=\lfloor n/u\rfloor +1$ and let $(I_j)_{j\in[p]}$ be defined as in \eqref{eq:index.vars}.
    \item Generate a quadratic polynomial objective function $f=\sum_{j\in[p]}f_j$ such that for each $j\in[p]$, $f_j\in\R[\mathbf x(I_j)]_2$ and the nonzero coefficient $f_{j,\alpha}$ with $\alpha\in\N^{I_j}_2$ and $|\alpha|= 2$ is randomly generated
    in $(-1,1)$ w.r.t. the uniform distribution;
    \item Take $m=p$ and $g_j:=-\|\mathbf x(I_j)\|_2^2+1$, $j\in[m]$;
    \item Take a random point $\mathbf a$ in $S(g)$ w.r.t. the uniform distribution;
    \item Let $r:=\lfloor l/p\rfloor$ and $(W_j)_{j\in[p]}$ be as in \eqref{eq:assign.eqcons}.
     For every $j\in[p]$ and every $i\in W_j$, generate a quadratic polynomial $h_i\in\R[\mathbf x(I_j)]_2$ by
    \begin{enumerate}
        \item for each $\alpha\in\N^{I_j}_2\backslash\{\mathbf 0\}$ with $|\alpha|\ne2$, taking $h_{i,\alpha}=0$;
        \item for each $\alpha\in\N^{I_j}_2$ with $|\alpha|=2$, taking a random coefficient $h_{i,\alpha}$ of $h_i$ in $(-1,1)$ w.r.t. the uniform distribution;
        \item setting $h_{i,\mathbf 0}:=-\sum_{\alpha\in\N^{I_j}_2\backslash \{\mathbf 0\}} h_{j,\alpha} \mathbf a^\alpha$.
    \end{enumerate}
    Then $\mathbf a$ is a feasible solution of POP \eqref{eq:POP.def}.
\end{enumerate}

The numerical results are displayed in Table \ref{tab:experiment.single.ball.sparse.mix} and \ref{tab:single.ball.sparse.QCQP.mix}. 
\begin{table}
    \caption{\small Numerical results for minimizing a random quadratic polynomial with CS-TS and ball constraints on each clique of variables}
    \label{tab:experiment.single.ball.sparse.mix}
    {\small\begin{itemize}
        \item POP size: $n=1000$, $m=p$, $l=0$, $u^{\max}=u+1$;  Relaxation order: $k=2$;
        Sparse order: $t=1$;
        SDP size: $a^{\max}=3$.
    \end{itemize}}
\small
\begin{center}
   \begin{tabular}{|c|c|c|c|c|c|c|c|c|c|c|}
        \hline
        \multicolumn{2}{|c|}{POP size}&
        \multicolumn{3}{c|}{SDP size} & \multicolumn{2}{c|}{Mosek}&      \multicolumn{2}{c|}{CGAL}&      \multicolumn{2}{c|}{LMBM} \\ \hline
$u$&$p$ &$\omega$&$s^{\max}$ & $\zeta$&
\multicolumn{1}{c|}{val}& \multicolumn{1}{c|}{time}&
\multicolumn{1}{c|}{val}&
\multicolumn{1}{c|}{time}&
\multicolumn{1}{c|}{val}&
\multicolumn{1}{c|}{time}\\
\hline
 11 &91 & 364 & 79 & 169654  & -160.05 & 163 & -160.01 & 498 & -489.87 & 98 \\
 \hline
 16 & 63 & 252 & 154 & 448354  & -135.78 & 1422 & -135.74  & 768 & -413.24 & 186 \\
\hline
 21 &48 & 192 & 254 & 939619  & $-$  & $-$ & -117.17 & 1605 & -351.65 & 299 \\
\hline
 26 &39 & 156 & 379 & 1705763  & $-$  & $-$ & -106.26& 3150 & -318.15 & 347 \\
\hline
\end{tabular}    
\end{center}
\end{table}
\begin{table}
    \caption{\small Numerical results for QCQPs with CS-TS and ball constraints on each clique of variables}
    \label{tab:single.ball.sparse.QCQP.mix}
    {\small\begin{itemize}
        \item POP size: $n=1000$, $m=p$, $l=143$, $u^{\max}=u+1$;  Relaxation order: $k=2$;
        Sparse order: $t=1$;
        SDP size: $a^{\max}=3$.
    \end{itemize}}
\small
\begin{center}
   \begin{tabular}{|c|c|c|c|c|c|c|c|c|c|c|}
        \hline
        \multicolumn{2}{|c|}{POP size}&
        \multicolumn{3}{c|}{SDP size} & \multicolumn{2}{c|}{Mosek}&      \multicolumn{2}{c|}{CGAL} &      \multicolumn{2}{c|}{LMBM} \\ \hline
$u$&$p$ &$\omega$&$s^{\max}$ & $\zeta$&
\multicolumn{1}{c|}{val}& \multicolumn{1}{c|}{time}&
\multicolumn{1}{c|}{val}&
\multicolumn{1}{c|}{time}&
\multicolumn{1}{c|}{val}&
\multicolumn{1}{c|}{time}\\
\hline
11 & 91 & 364  & 79 & 180303 & -155.91 & 158 & -155.87 & 604 & -500.47 & 83\\
\hline
16 & 63 & 252  & 154 & 468290 & 127.42 & 1707 & -127.36 & 1053 & -412.42 & 236 \\
\hline
21 & 48 & 192  & 254 & 939619 & $-$ & $-$ & -114.85 & 2877 & -363.23 & 128\\
\hline
26 & 39 & 156  & 379 & 1751556 & $-$ & $-$ & -102.30 & 6878 & -329.16 & 749 \\
\hline
\end{tabular}    
\end{center}
\end{table}

\paragraph{Discussion:} The behavior of solvers is similar to that in Section \ref{sec:experiment.single.box.sparse}. Here, we also emphasize that our framework is less efficient than interior-point methods for most benchmarks presented in \cite{cstssos}.
The two underlying reasons are that (1) the block size of the resulting SDP relaxations is small, in which case Mosek performs more efficiently, e.g., for the benchmarks from \cite[Section 5.2]{cstssos}, and (2) it is harder to find the constant trace, e.g., for the benchmarks from \cite[Section 5.4]{cstssos}. 
Thus our proposed method complements that in \cite{cstssos} when the block size of the SDP relaxations is large and/or when CTP can be efficiently verified.

\subsection{Randomly generated QCQPs with CS-TS and box constraints on each clique of variables}
\label{sec:experiment.single.box.sparse.mix}

\paragraph{Test problems:} We construct randomly generated QCQPs
with CS-TS as in Section \ref{sec:experiment.single.ball.sparse.mix}, where ball constraints are now replaced by box constraints. Namely, in Step 3 we take $m=n$, $g_j:=-x_j^2+1/u$, $j\in [n]$. 
The numerical results are displayed in Table  \ref{tab:experiment.single.box.sparse.mix} and \ref{tab:single.box.sparse.QCQP.mix}. 
\begin{table}
    \caption{\small Numerical results for minimizing a random quadratic polynomial with CS-TS and box constraints on each clique of variables}
    \label{tab:experiment.single.box.sparse.mix}
    {\small\begin{itemize}
        \item POP size: $n=m=1000$, $l=0$,  $u^{\max}=u+1$; Relaxation order: $k=2$;
        Sparse order: $t=1$;
        Constant trace: $a^{\max}\in [3,4]$.
    \end{itemize}}
\small
\begin{center}
   \begin{tabular}{|c|c|c|c|c|c|c|c|c|c|c|}
        \hline
        \multicolumn{2}{|c|}{POP size}&
        \multicolumn{3}{c|}{SDP size} & \multicolumn{2}{c|}{Mosek}&      \multicolumn{2}{c|}{CGAL}&      \multicolumn{2}{c|}{LMBM} \\ \hline
$u$&$p$ &$\omega$&$s^{\max}$ & $\zeta$&  
\multicolumn{1}{c|}{val}& \multicolumn{1}{c|}{time}&
\multicolumn{1}{c|}{val}&
\multicolumn{1}{c|}{time}&
\multicolumn{1}{c|}{val}&
\multicolumn{1}{c|}{time}\\
\hline
 11 & 91 & 2362 & 79 & 248335 & -126.15 & 151 &-126.04 & 1982 &  -541.78 & 140 \\
\hline
16 & 63 & 2250 & 154 & 601081 & -100.75 & 2225 &-100.64 & 7323  & -429.78 &  88   \\
\hline
21 & 48 & 2190 & 254 & 1191001  & $-$ & $-$ & -87.804 & 10734 & -363.88 & 157  \\
\hline
26 & 39 & 2154 & 379 & 2080265  & $-$ & $-$ & -81.908 & 20294 & -338.00 & 1129  \\
\hline
\end{tabular}    
\end{center}
\end{table}
\begin{table}
    \caption{\small Numerical results for QCQPs with CS-TS and box constraints on each clique of variables}
    \label{tab:single.box.sparse.QCQP.mix}
    {\small\begin{itemize}
        \item POP size: $n=m=1000$, $l=143$, $u^{\max}=u+1$;  Relaxation order: $k=2$;
         Sparse order: $t=1$;
        Constant trace: $a^{\max}\in [3,4]$.
    \end{itemize}}
\small
\begin{center}
   \begin{tabular}{|c|c|c|c|c|c|c|c|c|c|c|}
        \hline
        \multicolumn{2}{|c|}{POP size}&
        \multicolumn{3}{c|}{SDP size} & \multicolumn{2}{c|}{Mosek}&     \multicolumn{2}{c|}{CGAL}&     \multicolumn{2}{c|}{LMBM} \\ \hline
$u$&$p$ &$\omega$&$s^{\max}$ & $\zeta$& 
\multicolumn{1}{c|}{val}& \multicolumn{1}{c|}{time}&
\multicolumn{1}{c|}{val}&
\multicolumn{1}{c|}{time}&
\multicolumn{1}{c|}{val}&
\multicolumn{1}{c|}{time}\\
\hline
11 & 91 & 2362 & 79 & 258984 & -114.53 & 325 & -114.27 & 482 & -529.32 & 226 
\\ \hline
16 & 63 & 2250 & 154 & 621017 & -96.199 & 4450 & -96.079 & 1245 & -433.34 &  519
\\ \hline
21 & 48 & 2190 & 254 &1220027 & $-$ & $-$& -83.013 & 8204 & -372.97 & 554
\\ \hline
26 & 39 & 2154 & 379 &2126058 & $-$ & $-$& -74.532 & 27600 & -258.90 & 764
\\ \hline
\end{tabular}    
\end{center}
\end{table}

\paragraph{Discussion:} The behavior of solvers is similar to that  in Section \ref{sec:experiment.single.box.sparse}.

\section{Conclusion}
In this paper, we have proposed a general framework 
for exploiting the constant trace property (CTP) in solving large-scale
SDPs, typically SDP-relaxations from the Moment-SOS hierarchy applied to  POPs.
Extensive numerical experiments strongly suggest that with this CTP formulation, the 
CGAL solver based on first-order methods
is more efficient and more scalable than Mosek (based on IPM) without exploiting CTP, especially when the block size is large.
In addition, the optimal value returned by CGAL is typically within 1\% w.r.t. the one returned by Mosek. 

We have also integrated sparsity-exploiting techniques into the CTP framework 
in order to handle larger size POPs. For SDP-relaxations of large-scale POPs with a term- and/or correlative-sparsity pattern, and in applications 
for which only a medium accuracy of optimal solutions is enough, we believe that our framework should be very  useful.

As a topic of further investigation, we would like to improve the LP-based formulation for verifying CTP, for instance by relying on more general second-order cone programming.
We also would like to generalize the CTP-exploiting framework to  noncommutative POPs \cite{burgdorf16,klep2019sparse,wang2020exploiting} which have attracted a lot of attention in the quantum information community.
Another line of research would be to investigate whether CTP could be efficiently exploited by interior-point solvers.

\paragraph{\textbf{Acknowledgements}.} 
The first author was supported by the MESRI funding from EDMITT.
The second author was supported by the FMJH Program PGMO (EPICS project) and  EDF, Thales, Orange et Criteo, as well as from the Tremplin ERC Stg Grant ANR-18-ERC2-0004-01 (T-COPS project).
This work has benefited from the Tremplin ERC Stg Grant ANR-18-ERC2-0004-01 (T-COPS project), the European Union's Horizon 2020 research and innovation programme under the Marie Sklodowska-Curie Actions, grant agreement 813211 (POEMA) as well as from the AI Interdisciplinary Institute ANITI funding, through the French ``Investing for the Future PIA3'' program under the Grant agreement n$^{\circ}$ANR-19-PI3A-0004.
The third and fourth authors were supported by the European Research Council (ERC) under the European's Union Horizon 2020 research and innovation program (grant agreement 666981 TAMING).

\appendix
\section{Appendix}
\label{sec:Appendix}
%
\subsection{Sparse POPs}
\label{sec:sparse.POP.TS.CSTS}

For matrices $\mathbf A$ and $\mathbf B$ of same sizes,  the Hadamard product of $\mathbf A$ and $\mathbf B$, denoted by $\mathbf A\circ \mathbf B$, is the matrix with entries $[\mathbf A\circ \mathbf B]_{i,j}=A_{i,j}B_{i,j}$.

\subsubsection{Term sparsity (TS)}
\label{sec:TS}
Fix a relaxation order $k\in\N^{\ge k_{\min}}$ and a  sparse order $t\in\N\backslash\{0\}$. 
We compute as in \cite[Section 5]{wang2019tssos} the following block diagonal (up to permutation) $(0,1)$-binary matrices:
$\mathbf G_{k,t}^{(0)}$ of size $\s(k)$;
    $\mathbf G_{k,t}^{(i)}$ of size $\s(k-\lceil g_i\rceil)$, $i\in [m]$;
$\mathbf H_{k,t}^{(i)}$ of size $\s(k-\lceil h_i\rceil)$, $i\in [l]$.
Then we consider the following sparse moment relaxation of POP \eqref{eq:POP.def}:
\begin{equation}\label{eq:ts.SDP}
\tau_{k,t}^{\text{ts}} \,:= \,\inf \limits_{\mathbf y \in {\R^{\s({2k})} }} \left\{ L_{\mathbf y}(f)\ \left| \begin{array}{rl}
& \mathbf G_{k,t}^{(0)}\circ \mathbf M_k(\mathbf y) \succeq 0\,,\,y_{\mathbf 0}\,=\,1\,,\\
&\mathbf G_{k,t}^{(i)}\circ \mathbf M_{k - \lceil g_i \rceil }(g_i\;\mathbf y)   \succeq 0\,,\,i\in[m]\,,\\
&\mathbf H_{k,t}^{(i)}\circ \mathbf M_{k - \lceil h_i \rceil }(h_i\;\mathbf y)   = 0\,,\,i\in[l]
\end{array}
\right. \right\}\,.
\end{equation}
One has
$\tau_{k,t-1}^{\text{ts}}\le
    \tau_{k,t}^{\text{ts}}\le \tau_{k}\le f^\star$, for all $(k,t)$.
Moreover, we have the following theorem.
\begin{theorem}
(Wang et al. \cite[Theorem 5.1]{wang2019tssos}) For each $k\in\N^{\ge k_{\min}}$, the sequence $(\tau_{k,t}^{\text{ts}})_{t\in\N\backslash\{0\}}$ converges to $\tau_{k}$ (the optimal value of SDP \eqref{eq:moment.hierarchy}) in finitely many steps.
\end{theorem}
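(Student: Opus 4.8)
The plan is to combine the monotonicity chain $\tau_{k,t-1}^{\text{ts}}\le\tau_{k,t}^{\text{ts}}\le\tau_k$ recorded just above the statement with two further ingredients: (i) a stabilization argument for the binary sparsity patterns, which yields the assertion that convergence happens in \emph{finitely many} steps, and (ii) a chordal positive-semidefinite completion argument, which identifies the eventual value with $\tau_k$. I would work on the moment side throughout, using that strong duality between \eqref{eq:ts.SDP} and its sum-of-squares dual lets me pass to whichever side is convenient if needed.

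First I would observe that, by the construction in \cite[Section 5]{wang2019tssos}, each matrix $\mathbf G_{k,t}^{(i)}$ (for $i\in\{0\}\cup[m]$) and $\mathbf H_{k,t}^{(i)}$ (for $i\in[l]$) is obtained from its predecessor at sparse order $t-1$ by a chordal-extension step followed by a support-extension step, and neither of these operations can delete a nonzero entry. Hence, for fixed $k$ and each $i$, the $(0,1)$-matrix sequences $(\mathbf G_{k,t}^{(i)})_{t\ge 1}$ and $(\mathbf H_{k,t}^{(i)})_{t\ge 1}$ are entrywise nondecreasing. Since these matrices have fixed finite sizes $\s(k)$, $\s(k-\lceil g_i\rceil)$ and $\s(k-\lceil h_i\rceil)$, each such monotone sequence is eventually constant; taking the maximum of the finitely many stabilization indices yields a $t^\star$ with $\mathbf G_{k,t}^{(i)}=\mathbf G_{k,t^\star}^{(i)}$ and $\mathbf H_{k,t}^{(i)}=\mathbf H_{k,t^\star}^{(i)}$ for all $t\ge t^\star$. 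Then SDP \eqref{eq:ts.SDP} is literally the same optimization problem for all $t\ge t^\star$, so $\tau_{k,t}^{\text{ts}}=\tau_{k,t^\star}^{\text{ts}}$ for $t\ge t^\star$: the sequence is eventually constant, i.e.\ it converges in finitely many steps.

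It then remains to show $\tau_{k,t^\star}^{\text{ts}}=\tau_k$. One inequality is immediate: every $\mathbf y$ feasible for \eqref{eq:moment.hierarchy} is feasible for \eqref{eq:ts.SDP} (Hadamard multiplication by a block-diagonal $(0,1)$ pattern only extracts principal submatrices supported on the blocks, which remain psd), with the same objective $L_{\mathbf y}(f)$, whence $\tau_{k,t^\star}^{\text{ts}}\le\tau_k$. For the reverse inequality $\tau_k\le\tau_{k,t^\star}^{\text{ts}}$ I would exploit that at the fixed point the pattern $\mathbf G_{k,t^\star}^{(0)}$ is \emph{chordal} (this is what the chordal-extension step enforces in the limit) and its monomial support is \emph{closed} under the support-extension rule. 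Given a feasible $\mathbf y$ of \eqref{eq:ts.SDP} at order $t^\star$, the diagonal blocks of $\mathbf G_{k,t^\star}^{(0)}\circ\mathbf M_k(\mathbf y)$ are the maximal fully specified principal submatrices of the partial symmetric matrix supported on this chordal pattern, so by the classical chordal psd-completion theorem (Grone et al.) there is $\tilde{\mathbf y}$ with $\mathbf M_k(\tilde{\mathbf y})\succeq 0$ agreeing with $\mathbf y$ on the pattern. The closure property of the support is what guarantees that every monomial index occurring in $f$, in a localizing matrix $\mathbf M_{k-\lceil g_i\rceil}(g_i\,\mathbf y)$, or in an equality constraint $\mathbf M_{k-\lceil h_i\rceil}(h_i\,\mathbf y)$ already lies in the support of $\mathbf G_{k,t^\star}^{(0)}$; so $L_{\tilde{\mathbf y}}(f)=L_{\mathbf y}(f)$ and, after choosing the completion compatibly (see below), $\tilde{\mathbf y}$ still meets all constraints of \eqref{eq:moment.hierarchy}. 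Taking the infimum over feasible $\mathbf y$ then gives $\tau_k\le\tau_{k,t^\star}^{\text{ts}}$.

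The main obstacle I anticipate is precisely the last point: one must check, directly from the definition of the support-extension step in \cite{wang2019tssos}, that the stabilized pattern is simultaneously chordal, rich enough for the psd completion of $\mathbf M_k$ to exist, and structured so that the \emph{same} completion also renders every (sparsified) localizing matrix $\mathbf M_{k-\lceil g_i\rceil}(g_i\,\tilde{\mathbf y})$ positive semidefinite without disturbing the objective — in other words, that the fixed point of the iteration leaves no relaxation gap with respect to the dense relaxation \eqref{eq:moment.hierarchy}. Everything else (entrywise monotonicity, finiteness of chains of $(0,1)$-matrices, and the chordal completion theorem) is routine.
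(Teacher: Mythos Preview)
The paper does not supply its own proof of this theorem: it is quoted verbatim as Wang et al.\ \cite[Theorem 5.1]{wang2019tssos} and immediately followed by the statement of the dual \eqref{eq:dual.ts.SDP}, with no argument given. So there is no in-paper proof to compare your proposal against; any comparison has to be with the original TSSOS paper.

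Your finite-stabilization half is fine and matches the standard argument: the $(0,1)$-patterns are entrywise nondecreasing under the support-extension/chordal-extension iteration and live in a finite lattice, so they freeze after some $t^\star$, and the SDPs for $t\ge t^\star$ are identical.

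The second half, however, is where your sketch does not close. You propose to lift a feasible $\mathbf y$ of the stabilized sparse relaxation to a feasible $\tilde{\mathbf y}$ of the dense relaxation \eqref{eq:moment.hierarchy} via chordal psd completion of $\mathbf M_k(\mathbf y)$, keeping $L_{\mathbf y}(f)$ unchanged. The difficulty you flag is real and is not handled by the outline: a psd completion of the moment matrix alone gives no control over the \emph{localizing} matrices $\mathbf M_{k-\lceil g_i\rceil}(g_i\,\tilde{\mathbf y})$, whose entries are linear combinations of moments and may involve moments you have just overwritten in the completion. ``Choosing the completion compatibly'' is exactly the crux, and nothing in Grone et al.'s theorem lets you complete several partial matrices (moment and all localizing) simultaneously to psd matrices that are consistent through a single $\tilde{\mathbf y}$. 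In the TSSOS proof this is avoided: the argument runs on the dual (SOS) side, showing that at the fixed point the block structure is closed under the operations generating Gram supports, so any dense SOS certificate for $f-\xi$ already admits Gram matrices supported on the stabilized blocks; equality of optimal values then follows without any moment-matrix completion. If you want to stay on the primal side you would need a substantially stronger simultaneous-completion lemma, which is not standard.
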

The dual of \eqref{eq:ts.SDP} reads as:
\begin{equation}\label{eq:dual.ts.SDP}
\rho_{k,t}^{\text{ts}} = \sup \limits_{\xi,\mathbf Q_i,\mathbf U_i} \left\{ \xi\ \left| \begin{array}{rl}
&\bar{\mathbf Q}_i=\mathbf G_{k,t}^{(i)}\circ{\mathbf Q}_i \succeq 0\,,\,i\in\{0\}\cup [m]\,,\\
&\bar{\mathbf U}_i=\mathbf H_{k,t}^{(i)}\circ{\mathbf U}_i\,,\,i\in [l]\,,\\
&f-\xi=\mathbf v_k^\top \bar{\mathbf Q}_0 \mathbf v_k+\sum_{i\in [m]} g_i \mathbf v_{k-\lceil g_i\rceil}^\top \bar{\mathbf Q}_i \mathbf v_{k-\lceil g_i\rceil}\\
&\qquad\qquad\qquad\qquad+\sum_{i\in [l]} h_i \mathbf v_{k-\lceil h_i\rceil}^\top \bar{\mathbf U}_i \mathbf v_{k-\lceil h_i\rceil}
\end{array}
\right. \right\}\,.
\end{equation}

\subsubsection{Correlative-Term sparsity (CS-TS)}
\label{sec:CS-TS}
The basic idea of correlative-term sparsity is to exploit term sparsity for each clique. 
The clique structure of the initial set of variables is derived from correlative sparsity (Section \ref{sec:sparse.POP}).

Fix a relaxation order $k\in\N^{\ge k_{\min}}$.
For every sparse order $t\in\N\backslash\{0\}$ and for every $j\in [p]$, we compute the following block diagonal (up to permutation) $(0,1)-$binary matrices (see \cite{wang2020cs}):
$\mathbf G_{k,t,j}^{(0)}$ of size $\s(n_j,k)$;
 $\mathbf G_{k,t,j}^{(i)}$ of size $\s(n_j,k-\lceil g_i\rceil)$, $i\in J_j$; 
 $\mathbf H_{k,t,j}^{(i)}$ of size $\s(n_j,k-\lceil h_i\rceil)$, $i\in W_j$. Then let us consider the following CS-TS moment relaxation:
\begin{equation}\label{eq:cs-ts.SDP}
\tau_{k,t}^{\text{cs-ts}} \,:= \,\inf \limits_{\mathbf y \in {\R^{\s({2k})} }} \left\{ L_{\mathbf y}(f)\ \left| \begin{array}{rl}
& \mathbf G_{k,t,j}^{(0)}\circ \mathbf M_k(\mathbf y, I_j) \succeq 0\,,\,j\in[p]\,,\,y_{\mathbf 0}\,=\,1\,,\\
&\mathbf G_{k,t,j}^{(i)}\circ \mathbf M_{k - \lceil g_i \rceil }(g_i\;\mathbf y,I_j)   \succeq 0\,,\,i\in J_j\,,\,j\in[p]\,,\\
&\mathbf H_{k,t,j}^{(i)}\circ \mathbf M_{k - \lceil h_i \rceil }(h_i\;\mathbf y,I_j)   = 0\,,\,i\in W_j\,,\,j\in[p]
\end{array}\right.\right\}\,.
\end{equation}
One has
$\tau_{k,t-1}^{\text{cs-ts}}\le
    \tau_{k,t}^{\text{cs-ts}}\le
    \tau_{k}^{\text{cs}}\le \tau_{k}\le f^\star$, for all $(k,t)$.
Moreover, we have the following theorem.
\begin{theorem}
(Wang et al. \cite{wang2020cs}) 
For each $k\in\N^{\ge k_{\min}}$, the sequence $(\tau_{k,t}^{\text{cs-ts}})_{t\in\N\backslash\{0\}}$ converges to $\tau_{k}^{\text{cs}}$ (the optimal value of SDP \eqref{eq:dual.cs.SDP}) in finitely many steps.
\end{theorem}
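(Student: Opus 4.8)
The plan is to combine three ingredients: (i) the monotone non-decreasing character of $(\tau_{k,t}^{\text{cs-ts}})_{t}$ recorded just above, (ii) finite stabilization of the binary mask matrices $\mathbf G_{k,t,j}^{(i)}$ and $\mathbf H_{k,t,j}^{(i)}$, which for each fixed $k$ range over a finite lattice of $(0,1)$-matrices, and (iii) the fact that once all these masks have stabilized, the CS-TS relaxation \eqref{eq:cs-ts.SDP} coincides with the CS relaxation \eqref{eq:dual.cs.SDP}. The guiding observation is that, for each clique $I_j$, the CS-TS construction is \emph{precisely} term sparsity applied to the sub-POP $\min\{f_j(\mathbf x)\,:\,g_i(\mathbf x)\ge 0,\ i\in J_j,\ h_i(\mathbf x)=0,\ i\in W_j\}$ in the variables $\mathbf x(I_j)$, so the cliquewise behaviour is governed by the already-quoted term-sparsity convergence result \cite[Theorem 5.1]{wang2019tssos}.

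First I would establish stabilization. By construction the pattern-extension step for each clique (block closure followed by a fixed chordal extension) is inflationary, so for every $k$, $j$ and $i$ the supports of $\mathbf G_{k,t,j}^{(i)}$ and $\mathbf H_{k,t,j}^{(i)}$ are non-decreasing in $t$ for the entrywise order; since they live in the finite set of $(0,1)$-matrices of size $\s(n_j,k-\lceil g_i\rceil)$ (resp.\ $\s(n_j,k-\lceil h_i\rceil)$), there is $t^\star=t^\star(k)$ for which all masks are constant for $t\ge t^\star$. For $t\ge t^\star$ the term-sparsity graph of every clique/constraint pair is chordal and closed under the extension operation.

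The crux, and the step I expect to be the main obstacle, is to prove $\tau_{k,t^\star}^{\text{cs-ts}}=\tau_k^{\text{cs}}$; the inequality $\le$ is immediate from the displayed chain $\tau_{k,t}^{\text{cs-ts}}\le\tau_k^{\text{cs}}$. For $\ge$, take a feasible point $\mathbf y$ of \eqref{eq:cs-ts.SDP} at order $t^\star$. For each $j$ the clique-$j$ moments of $\mathbf y$ form a feasible point of the masked relaxation of the sub-POP above; the argument behind \cite[Theorem 5.1]{wang2019tssos} shows that at the stable sparse order such a point extends to a feasible point of the \emph{dense} order-$k$ relaxation of that sub-POP with unchanged objective value. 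Concretely, at saturation the masked psd constraints amount to positive semidefiniteness of the principal submatrices of $\mathbf M_k(\cdot,I_j)$ and of the $\mathbf M_{k-\lceil g_i\rceil}(g_i\cdot,I_j)$ indexed by the maximal cliques of the chordal pattern, so one may invoke the Grone--Johnson--S\'a--Wolkowicz psd-completion theorem to fill in the missing moments. The remaining point is to glue these cliquewise completions into a \emph{single} moment sequence $\tilde{\mathbf y}$ feasible for \eqref{eq:dual.cs.SDP}: a moment $y_\alpha$ with $\supp(\alpha)\subseteq I_i\cap I_j$ already lies in a retained position of the masks of both cliques $i$ and $j$, hence is left untouched by either completion, so no conflicting assignment arises and the running-intersection structure yields a consistent global $\tilde{\mathbf y}$. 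Since the objective $L_{\mathbf y}(f)=\sum_j L_{\mathbf y}(f_j)$ depends only on retained moments, $L_{\tilde{\mathbf y}}(f)=L_{\mathbf y}(f)$, whence $\tau_k^{\text{cs}}\le\tau_{k,t^\star}^{\text{cs-ts}}$.

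Putting these together, $\tau_{k,t}^{\text{cs-ts}}=\tau_k^{\text{cs}}$ for all $t\ge t^\star$, i.e.\ the sequence reaches the CS relaxation value after finitely many steps. The conceptually easy parts are the monotonicity and the finite-lattice stabilization; the delicate parts are the \emph{simultaneous} chordal completion of the moment and localizing matrices at saturation (already packaged inside \cite[Theorem 5.1]{wang2019tssos}) and the verification that the retained shared moments make the cliquewise completions mutually consistent — this gluing along the running intersection property is the only ingredient genuinely beyond the term-sparsity case.
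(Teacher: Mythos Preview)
The paper does not supply its own proof of this statement: it is quoted verbatim as a result of Wang et al.\ \cite{wang2020cs} and left unproved here, just as the analogous TS convergence theorem immediately above is quoted from \cite[Theorem~5.1]{wang2019tssos}. So there is nothing in the present paper to compare your argument against; any detailed proof would have to be checked against the original source \cite{wang2020cs}.

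That said, your outline tracks the standard strategy used in the TSSOS/CS-TSSOS papers: monotonicity of the masks in a finite lattice forces stabilization, and at the stable order one invokes chordal psd completion cliquewise. The one place where your sketch is thin is the gluing step. You assert that any moment $y_\gamma$ with $\supp(\gamma)\subseteq I_i\cap I_j$ ``already lies in a retained position of the masks of both cliques'' and is therefore untouched by either completion; but the retained positions are entries $(\alpha,\beta)$ of the (localizing) moment matrices, and what must be shown is that every \emph{entry} $y_{\alpha+\beta}$ that is shared between the clique-$i$ and clique-$j$ matrices is either fixed by the original data in both or can be completed consistently. In \cite{wang2020cs} this is handled by building the overlap moments into the initial support set before the iterative extension begins, which your sketch does not mention. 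Without that ingredient the two cliquewise completions could in principle assign different values to a common missing entry, and the running intersection property alone does not prevent this.
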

The dual of \eqref{eq:cs-ts.SDP} reads as:
\begin{equation}\label{eq:dual.cs-ts.SDP}
\rho_{k,t}^{\text{cs-ts}} = \sup \limits_{\xi,\mathbf Q_i^{j},\mathbf U_i^{j}} \left\{ \xi\ \left|\begin{array}{rl}
&\bar{\mathbf Q}_i^{(j)}=\mathbf G_{k,t,j}^{(i)}\circ{\mathbf Q}_i^{(j)} \succeq 0\,,\,i\in\{0\}\cup J_j\,,\,j\in [p]\,,\\
&\bar{\mathbf U}_i^{(j)}=\mathbf H_{k,t,j}^{(i)}\circ{\mathbf U}_i^{(j)}\,,\,i\in W_j\,,\,j\in [p]\,,\\
&f-\xi=\sum_{j\in[p]}\left((\mathbf v_k^{I_j})^\top \bar{\mathbf Q}_0^{(j)} \mathbf v_k^{I_j}\right.\\
&\qquad\qquad+\sum_{i\in J_j} g_i (\mathbf v_{k-\lceil g_i\rceil}^{I_j})^\top \bar{\mathbf Q}_i^{(j)} \mathbf v_{k-\lceil g_i\rceil}^{I_j}\\
&\qquad\qquad\left.+\sum_{i\in W_j} h_i (\mathbf v_{k-\lceil h_i\rceil}^{I_j})^\top \bar{\mathbf U}_i^{(j)} \mathbf v_{k-\lceil h_i\rceil}^{I_j}\right)
\end{array}
\right.\right\}\,.
\end{equation}

\subsection{Conditional gradient-based augmented Lagrangian (CGAL)}
\label{sec:sdp.ctp}
\subsubsection{SDP with CTP}
\label{sec:sdp.ctp.dense}
Let $s,l, s^{(j)}\in\N^{\ge 1}$, $j\in[\omega]$, be fixed such that $s=\sum_{j=1}^\omega s^{(j)}$. 
Let $\mathcal{S}$ be the set of real symmetric matrices of size $s$ in a block diagonal form:
$\mathbf X=\diag(\mathbf X_1,\dots,\mathbf X_\omega)$,
such that $\mathbf X_j$ is a block of size $s^{(j)}$, $j\in [\omega]$.
Let $s^{\max}:=\max_{j\in[\omega]}s^{(j)}$.
Let $\mathcal{S^+}$ be the set of all $\mathbf X\in \mathcal S$ such that $\mathbf X\succeq 0$, i.e., $\mathbf X$ has only nonnegative eigenvalues.
Then $\mathcal S$ is a Hilbert space with scalar product $\left<\mathbf A, \mathbf B\right> = \trace(\mathbf B^\top \mathbf A)$ and $\mathcal S^+$ is a self-dual cone.

Let us consider the following SDP:
\begin{equation}\label{eq:SDP.form.0}
\tau = \inf_{\mathbf X\in \mathcal{S}^+} \,\{ \,\left< \mathbf C,\mathbf X\right>\,:\,\mathcal{A} \mathbf X=\mathbf b\}\,,
\end{equation}
where $\mathcal{A}:\mathcal{S}\to \R^{\zeta}$ is a linear operator of the form
$\mathcal{A}\mathbf X=[\left< \mathbf A_{1},\mathbf X\right>,\dots,\left< \mathbf A_{\zeta},\mathbf X\right>]$,
with $\mathbf A_{i} \in \mathcal{S}$, $i \in[\zeta]$, $\mathbf C \in \mathcal{S}$ is the cost matrix and $\mathbf b\in \R^{\zeta}$ is a vector. 

The dual of SDP \eqref{eq:SDP.form.0} reads as:
\begin{equation}\label{eq:SDP.form.dual.0}
\rho = \sup_{\mathbf y\in\R^\zeta}\, \{\, \mathbf b^\top\mathbf y\,:\,
\mathcal{A}^\top \mathbf y-\mathbf C\in \mathcal{S}^+\,\}\,,
\end{equation}
where $\mathcal{A}^\top:\R^{\zeta}\to \mathcal{S}$ is the adjoint operator of $\mathcal{A}$, i.e., $\mathcal{A}^\top\mathbf y=\sum_{i\in[\zeta]} y_i\mathbf A_{i}$. 

The following assumption will be used later on.
\begin{assumption}\label{ass:general.assump.sdp}
Consider the following conditions:
\begin{enumerate}
\item Strong duality of primal-dual \eqref{eq:SDP.form.0}-\eqref{eq:SDP.form.dual.0} holds, i.e., $\rho=\tau$ and $\rho\in\R$.
\item Constant trace property (CTP): $\exists a>0\ :\ \forall \ \mathbf X\in \mathcal S\,,\,\mathcal{A} \mathbf X=\mathbf b\Rightarrow 
\trace(\mathbf X)=a$.
\end{enumerate}
\end{assumption}

For $\mathbf X\in\mathcal S$, the Frobenius norm of $\mathbf X$ is defined by 
$\|\mathbf X\|_F:=\sqrt{\left<\mathbf X, \mathbf X\right>}$.
We denote by $\|\mathcal A\|$ the operator norm of $\mathcal A$, i.e., 
   $ \|\mathcal A\|:=\max_{\mathbf X\in \mathcal S}\|\mathcal A \mathbf X\|_2/\|\mathbf X\|_F$.
The smallest eigenvalue of a real symmetric matrix $\mathbf D$ is denoted by $\lambda_{\min}(\mathbf D)$.

\paragraph{Algorithm.}

In \cite{yurtsever2019scalable}, Yurtsever et al. state
Algorithm \ref{alg:CGAL} (see below) to solve SDP \eqref{eq:SDP.form.0} with CTP. This procedure is based on the augmented Lagrangian paradigm combined together with the conditional gradient method.

    \begin{algorithm}
    \small
    \caption{CGAL-SDP-CTP}
    \label{alg:CGAL} 
    \textbf{Input:} SDP \eqref{eq:SDP.form.0} such that Assumption \ref{ass:general.assump.sdp} holds; Parameter $K>0$.\\
    \textbf{Output:} $(\mathbf X_t)_{t\in\N}$.
    \begin{algorithmic}[1]
    \State Set $\mathbf X_0:=\mathbf 0_{\mathcal S}$ and $\mathbf y_0:=\mathbf 0_{\R^\zeta}$.
 \For{$t\in\N$}{}
 \State Set $\beta_t:=\sqrt{t+1}$ and $\eta_t:=2/(t+1)$;
 \State Take an eigenvector $\mathbf u_t$ corresponding to $\lambda_{\min}(\mathbf C +\mathcal A^\top (\mathbf y_{t-1}+\eta_t(\mathcal A \mathbf X_{t-1}-\mathbf b)))$;
 \State Set $\mathbf X_{t}:=(1-\eta_t)\mathbf X_{t-1}+\eta_t a \mathbf u_t\mathbf u_t^\top$;
 \State Select $\gamma_t$ as the largest $\gamma\in[0,1]$ such that: \\
    \hspace*{\algorithmicindent}\hspace*{\algorithmicindent} $\gamma \|\mathcal A \mathbf X_{t}-\mathbf b\|_2^2 \le \beta_t\eta_t^2a^2\|\mathcal A\|^2$ and $\|\mathbf y_{t-1}+\gamma(\mathcal A \mathbf X_{t}-\mathbf b)\|_2\le K$;

 \State Set $\mathbf y_{t}=\mathbf y_{t-1}+\gamma_t(\mathcal A \mathbf X_{t}-\mathbf b)$.
 \EndFor
    \end{algorithmic}
    \end{algorithm}
    
The convergence of the sequence $(\mathbf X_t)_{t\in\N}$ in Algorithm \ref{alg:CGAL} to the set of optimal solutions of SDP \eqref{eq:SDP.form.0} is guaranteed as follows:
    \begin{theorem}\cite[Fact 3.1 ]{yurtsever2019scalable}
    Consider SDP \eqref{eq:SDP.form.0} such that Assumption \ref{ass:general.assump.sdp} holds. Let $(\mathbf X_t)_{t\in\N}$ be in the output of Algorithm \ref{alg:CGAL}. Then $\mathbf X_t\succeq 0$, for all $t\in\N$ and 
    $\|\mathcal A \mathbf X_{t}-\mathbf b\|_2\to 0$, $ |\left<\mathbf C, \mathbf X_t\right>-\tau| \to 0$ as $ t\to \infty$,
    with the rate of order $\mathcal O(\sqrt t)$.
    \end{theorem}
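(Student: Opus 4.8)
The plan is to verify the two structural assertions directly and then to recognize Algorithm~\ref{alg:CGAL} as an instance of the conditional‑gradient augmented Lagrangian scheme of Yurtsever et al., whose $\mathcal{O}(t^{-1/2})$ rate is the cited \cite[Fact~3.1]{yurtsever2019scalable}. For $\mathbf{X}_t\succeq 0$ I would argue by induction that in fact $\mathbf{X}_t\succeq 0$ and $\trace(\mathbf{X}_t)=a$ for every $t$: indeed $\mathbf{X}_0=\mathbf{0}$, and for $t\ge 1$ we have $\eta_t=2/(t+1)\in(0,1]$, so $\mathbf{X}_t=(1-\eta_t)\mathbf{X}_{t-1}+\eta_t\,(a\,\mathbf{u}_t\mathbf{u}_t^\top)$ is a convex combination of two positive semidefinite matrices, hence positive semidefinite; taking traces and using $\|\mathbf{u}_t\|_2=1$ propagates $\trace(\mathbf{X}_t)=a$. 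Thus every iterate lies in the compact spectrahedron $\mathcal{D}:=\{\mathbf{X}\in\mathcal{S}^+:\trace(\mathbf{X})=a\}$, whose $\|\cdot\|_F$-diameter is at most $2a$, and by Assumption~\ref{ass:general.assump.sdp}(2) every feasible point of SDP~\eqref{eq:SDP.form.0}---in particular an optimal solution $\mathbf{X}^\star$, which exists because $\mathcal{D}$ is compact---also lies in $\mathcal{D}$.

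Next I would identify one iteration as a single Frank--Wolfe step, performed over $\mathcal{D}$, on a penalized Lagrangian $\mathcal{L}_\beta(\mathbf{X},\mathbf{y}):=\langle\mathbf{C},\mathbf{X}\rangle+\langle\mathbf{y},\mathcal{A}\mathbf{X}-\mathbf{b}\rangle+\tfrac{\beta}{2}\|\mathcal{A}\mathbf{X}-\mathbf{b}\|_2^2$: the matrix whose bottom eigenvector is computed in Step~4 is the gradient in $\mathbf{X}$ of such a Lagrangian at $(\mathbf{X}_{t-1},\mathbf{y}_{t-1})$ (for the penalty level fixed by the algorithm's schedule), and a minimizer of $\mathbf{H}\mapsto\langle\nabla_{\mathbf{X}}\mathcal{L},\mathbf{H}\rangle$ over $\mathcal{D}$ is exactly $\mathbf{H}_t=a\,\mathbf{u}_t\mathbf{u}_t^\top$ with $\mathbf{u}_t$ a unit eigenvector for $\lambda_{\min}$, which is why $\lambda_{\min}$ is computed. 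Since $\mathbf{X}\mapsto\mathcal{L}_\beta(\mathbf{X},\mathbf{y})$ has $\beta\|\mathcal{A}\|^2$-Lipschitz gradient and $\mathcal{D}$ has diameter $\le 2a$, the descent lemma together with optimality of the linear minimization oracle and convexity of $\mathcal{L}_\beta(\cdot,\mathbf{y})$ give, using $\mathcal{L}_\beta(\mathbf{X}^\star,\mathbf{y})=\langle\mathbf{C},\mathbf{X}^\star\rangle=\tau$,
\[
\mathcal{L}_{\beta_t}(\mathbf{X}_t,\mathbf{y}_{t-1})-\tau \;\le\; (1-\eta_t)\bigl(\mathcal{L}_{\beta_t}(\mathbf{X}_{t-1},\mathbf{y}_{t-1})-\tau\bigr)\;+\;2\,\eta_t^2\,\beta_t\,a^2\,\|\mathcal{A}\|^2 .
\]
I would then account for the change of the merit value when $\mathbf{y}_{t-1}$ is replaced by $\mathbf{y}_t=\mathbf{y}_{t-1}+\gamma_t(\mathcal{A}\mathbf{X}_t-\mathbf{b})$ and when $\beta_t=\sqrt{t+1}$ is increased to $\beta_{t+1}=\sqrt{t+2}$: the first clipping rule gives $\gamma_t\|\mathcal{A}\mathbf{X}_t-\mathbf{b}\|_2^2\le\beta_t\eta_t^2a^2\|\mathcal{A}\|^2=\mathcal{O}(t^{-3/2})$, while $\beta_{t+1}-\beta_t=\mathcal{O}(t^{-1/2})$, and summing the recursion against a suitable potential---propagating a self-improving bound on $\|\mathcal{A}\mathbf{X}_t-\mathbf{b}\|_2$, since the trivial bound from $\mathbf{X}_t\in\mathcal{D}$ is not summable against the increments $\beta_{t+1}-\beta_t$---yields $\mathcal{L}_{\beta_t}(\mathbf{X}_t,\mathbf{y}_{t-1})-\tau=\mathcal{O}(t^{-1/2})$.

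From here I would read off the two limits. The second clipping rule keeps $\|\mathbf{y}_t\|_2\le K$ for all $t$, and this boundedness together with strong duality (Assumption~\ref{ass:general.assump.sdp}(1)) converts the vanishing augmented‑Lagrangian gap into two‑sided control: expanding $\mathcal{L}_{\beta_t}(\mathbf{X}_t,\mathbf{y}_{t-1})-\tau=(\langle\mathbf{C},\mathbf{X}_t\rangle-\tau)+\langle\mathbf{y}_{t-1},\mathcal{A}\mathbf{X}_t-\mathbf{b}\rangle+\tfrac{\beta_t}{2}\|\mathcal{A}\mathbf{X}_t-\mathbf{b}\|_2^2$, using the elementary upper bound $\langle\mathbf{C},\mathbf{X}_t\rangle-\tau\le\|\mathbf{y}^\star\|_2\,\|\mathcal{A}\mathbf{X}_t-\mathbf{b}\|_2$ for a dual‑optimal $\mathbf{y}^\star$ (valid since $\mathbf{X}_t\succeq 0$ and $\mathcal{A}^\top\mathbf{y}^\star-\mathbf{C}\succeq 0$), and on the other side a lower bound on $\mathcal{L}_{\beta_t}(\mathbf{X}_t,\mathbf{y}_{t-1})$ coming from $\min_{\mathbf{X}\in\mathcal{D}}\mathcal{L}_{\beta_t}(\mathbf{X},\mathbf{y}_{t-1})$ together with the (separately established) fact that the generated dual iterates approach dual optimality, one first deduces $\|\mathcal{A}\mathbf{X}_t-\mathbf{b}\|_2=\mathcal{O}(t^{-1/2})$ and then $|\langle\mathbf{C},\mathbf{X}_t\rangle-\tau|=\mathcal{O}(t^{-1/2})$. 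The main obstacle is precisely this balancing act: $\beta_t$ must grow slowly enough that the Frank--Wolfe curvature term $\eta_t^2\beta_t$ and the penalty increments stay summable, yet fast enough that a small Lagrangian gap forces a small infeasibility, all while the clipping of $\gamma_t$ keeps the dual iterates bounded so that the Lagrangian value still dominates the objective; showing that the resulting recursion telescopes---rather than drifts---at rate $t^{-1/2}$ is the crux and is carried out in detail in \cite{yurtsever2019scalable}, so in the present paper the result follows by invoking that analysis once Assumption~\ref{ass:general.assump.sdp} is in force.
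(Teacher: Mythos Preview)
The paper does not prove this theorem at all: the statement is quoted verbatim from \cite{yurtsever2019scalable} (hence the label ``Fact 3.1'' in the theorem header), and no proof is supplied or expected. Your proposal therefore goes well beyond what the paper does, and your final sentence---that the result follows by invoking the analysis in \cite{yurtsever2019scalable} once Assumption~\ref{ass:general.assump.sdp} is in force---is already the paper's entire argument.

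That said, your sketch of the underlying mechanism is broadly faithful to the CGAL analysis: identifying $\mathcal{D}=\{\mathbf X\in\mathcal S^+:\trace(\mathbf X)=a\}$ as the Frank--Wolfe domain, recognizing $a\,\mathbf u_t\mathbf u_t^\top$ as the linear minimization oracle over $\mathcal{D}$ for the augmented-Lagrangian gradient, and balancing the penalty schedule $\beta_t=\sqrt{t+1}$ against the curvature term and the dual-step clipping are exactly the ingredients of the proof in \cite{yurtsever2019conditional,yurtsever2019scalable}. One small inaccuracy: you claim $\trace(\mathbf X_t)=a$ for \emph{every} $t$, but $\mathbf X_0=\mathbf 0$ has trace $0$; the trace becomes $a$ only from $t=1$ onward because $\eta_1=1$ wipes out $\mathbf X_0$. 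This does not affect the PSD claim (which is what the theorem asserts) or the asymptotics. The part you flag as ``the crux''---closing the recursion to get the $t^{-1/2}$ rate simultaneously on the objective gap and the infeasibility---is indeed the delicate step, and you are right to defer it to the cited reference rather than reproduce it.
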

\begin{remark}
In order to achieve the best 
convergence rate for Algorithm \ref{alg:CGAL}, we scale the problem's input as follows:
$\|\mathbf C\|_F=\|\mathcal A\|=a=1$ and $\|\mathbf A_1\|_F=\dots=\|\mathbf A_\zeta\|_F$.
\end{remark}

\begin{remark}
Given $\varepsilon>0$, the for loop in Algorithm \ref{alg:CGAL} terminates when:
\begin{equation}\label{eq:gap.primal.dual}
    \frac{|\left<\mathbf C, \mathbf X_{t-1}\right> - (a\lambda_{\min}(\mathbf C +\mathcal A^\top (\mathbf y_{t-1}+\eta_t(\mathcal A \mathbf X_{t-1}-\mathbf b))) - \mathbf b^\top \mathbf y_{t-1})|}{1+\max\{|\left<\mathbf C, \mathbf X_{t-1}\right>|, |a\lambda_{\min}(\mathbf C +\mathcal A^\top (\mathbf y_{t-1}+\eta_t(\mathcal A \mathbf X_{t-1}-\mathbf b))) - \mathbf b^\top \mathbf y_{t-1}|\}}\le \varepsilon
\end{equation}
and $\|\mathcal A \mathbf X_{t-1}-\mathbf b\|_2/\max\{1,\|b\|_2\} \le \varepsilon$. 
In our experiments, we choose $\varepsilon=10^{-3}$.
Note that the left hand side in \eqref{eq:gap.primal.dual} is the relative gap between the  primal and dual approximate values obtained at each iteration.
\end{remark}

\begin{remark}\label{re:implicit}
To save memory at each iteration, we can run Algorithm \ref{alg:CGAL} with an implicit $\mathbf X_t$ by setting $\mathbf w_t:=\mathcal A \mathbf X_{t}-\mathbf b$.
In this case, Step 5 becomes $\mathbf w_t:=(1-\eta_t)\mathbf w_{t-1}+\eta_t [\mathcal A(a\mathbf u_t\mathbf u_t^\top)-b] $.
Thus we only obtain an approximate dual solution $\mathbf y_t$ of SDP \eqref{eq:SDP.form.0} when Algorithm \ref{alg:CGAL} terminates. 
To recover an approximate primal solution $\mathbf X$ of SDP \eqref{eq:SDP.form.0}, we rely on a process  similar to steps 2 and 3 of Algorithm \ref{alg:sol.SDP.CTP.0}, which will be presented later on. 
\end{remark}
In Appendix \ref{sec:sdp.ctp.blocks}, we provide an analogous method to solve an SDP with CTP on each subset of blocks.
\subsubsection{SDP with CTP on each subset of blocks}
\label{sec:sdp.ctp.blocks}
Let $p\in\N^{\ge 1}$, $s_j,\omega_j\in\N$, $j\in[p]$, and $s^{(i,j)}\in\N^{\ge 1}$, $i\in[\omega_p]$, $j\in[p]$, be fixed such that $s_j=\sum_{i\in [\omega_j]} s^{(i,j)}$, $j\in[p]$. 
For every $j\in[p]$, let $\mathcal{S}_j$ be the set of real symmetric matrices of size $s_j$ in a block diagonal form:
$\mathbf X_j=\diag(\mathbf X_{1,j},\dots,\mathbf X_{\omega_j,j})$, 
such that $\mathbf X_{i,j}$ is a block of size $s^{(i,j)}$, $i\in [\omega_j]$.
Let $s^{\max}:=\max_{i\in[\omega_p],j\in[p]}s^{(i,j)}$.
For every $j\in[p]$, let $\mathcal{S}^+_j$ be the set of all $\mathbf X_j\in \mathcal S_j$ such that $\mathbf X_j\succeq 0$.
Then for every $j\in[p]$, $\mathcal S_j$ is a Hilbert space with scalar product $\left<\mathbf A, \mathbf B\right> = \trace(\mathbf B^\top \mathbf A)$ and $\mathcal S^+_j$ is a self-dual cone.

Let us consider the following SDP:
\begin{equation}\label{eq:SDP.form.0.blocks}
\tau = \inf_{\mathbf X_j\in \mathcal{S}_j^+} \,\left\{ \,\sum_{j\in[p]}\left< \mathbf C_j,\mathbf X_j\right>\,:\,\sum_{j\in[p]}  \mathcal A_{j}\mathbf X_j= \mathbf b \right\}\,,
\end{equation}
where $\mathcal{A}_j:\mathcal{S}_j\to \R^{\zeta}$ is a linear operator of the form $\mathcal{A}_j\mathbf X=[\left< \mathbf A_{1,j},\mathbf X\right>,\dots,\left< \mathbf A_{\zeta,j},\mathbf X\right>]$,
with $\mathbf A_{i,j} \in \mathcal{S}_j$, $i \in[\zeta]$, $\mathbf C_j \in \mathcal{S}_j$, $j\in[p]$, and $\mathbf b\in \R^\zeta$. 

The dual of SDP \eqref{eq:SDP.form.0.blocks} reads as:
\begin{equation}\label{eq:SDP.form.dual.0.blocks}
\rho = \sup_{\mathbf y\in\R^\zeta}\, \left\{\, \mathbf b^\top\mathbf y\,:\,
\mathcal{A}_j^\top \mathbf y-\mathbf C_j\in \mathcal{S}^+_j\,,j\in[p]\,\right\}\,,
\end{equation}
where
$\mathcal{A}_j^\top:\R^{\zeta}\to \mathcal{S}_j$ is the adjoint operator of $\mathcal{A}_j$, i.e., $\mathcal{A}_j^\top\mathbf z=\sum_{i\in[\zeta]} z_i\mathbf A_{i,j}$, $j\in[p]$.

The following assumption will be used later on:
\begin{assumption}\label{ass:general.assump.sdp.blocks}
Consider the following conditions:
\begin{enumerate}
\item Strong duality of primal-dual \eqref{eq:SDP.form.0.blocks}-\eqref{eq:SDP.form.dual.0.blocks} holds, i.e., $\rho=\tau$ and $\rho\in\R$.
\item Constant trace property (CTP): there exist $a_j>0$ and  $j\in[p]$, such that 
\begin{equation}\label{eq:constan.trace.prop.blocks}
\left.
\begin{array}{lr}
\forall \ \mathbf X_j\in \mathcal S_j\,,j\in[p]\,,\\
\sum_{j\in[p]}\mathcal A_{j}\mathbf X_j= \mathbf b
\end{array}
\right\}
\Rightarrow 
\trace(\mathbf X_j)=a_j\,,j\in[p]\,.
\end{equation}
\end{enumerate}
\end{assumption}

 Recall that 
$\lambda_{\min}(\mathbf D)$ stands for the smallest eigenvalue of a real symmetric matrix $\mathbf D$.
We denote by $\prod_{j\in[p]}\mathcal{S}_j$ the set of all $\mathbf X=\diag(\mathbf X_j)_{j\in[p]}$ such that $\mathbf X_j\in \mathcal{S}_j$, for $j\in[p]$.
Let $\mathbf C:=\diag(\mathbf C_j)_{j\in[p]}$ and
let $\mathcal{A}:\prod_{j\in[p]}\mathcal{S}_j\to \R^{\zeta}$ be a linear operator of the form:
$\mathcal{A}\mathbf X= \sum_{j\in[p]} \mathcal{A}_j\mathbf X_j$, for all $ \mathbf X=\diag(\mathbf X_j)_{j\in[p]}\in \prod_{j\in[p]}\mathcal{S}_j$.
Then for every $\mathbf X=\diag(\mathbf X_j)_{j\in[p]}\in \prod_{j\in[p]}\mathcal{S}_j$, we have
$\left< \mathbf C,\mathbf X\right>=\sum_{j\in[p]}\left< \mathbf C_j,\mathbf X_j\right>$ and $\mathcal{A}\mathbf X=\left[\left< \mathbf A^{(1)},\mathbf X\right>,\dots,\left< \mathbf A^{(\zeta)},\mathbf X\right>\right]$,
where $\mathbf A^{(i)}:=\diag((\mathbf A_{i,j})_{j\in[p]})$, for $i\in [\zeta]$.

SDP \eqref{eq:SDP.form.0.blocks} can be rewritten as 
$\tau = \inf_{\mathbf X\in \prod_{j\in[p]} \mathcal{S}_j^+} \,\left\{ \,\left< \mathbf C,\mathbf X\right>\,:\,  \mathcal A\mathbf X= \mathbf b \right\}$.

The dual operator $\mathcal{A}^\top: \R^\zeta\to \prod_{j\in[p]} \mathcal{S}_j$ of $\mathcal{A}$ reads $\mathcal{A}^\top \mathbf z=\diag((\mathcal{A}_j^\top \mathbf z)_{j\in[p]})$.
Note $\Delta_j:=\{\mathbf X_j\in  \mathcal{S}_j^+\,:\,\trace(\mathbf X_j)=a_j\}$\,,\, for $j\in[p]$.
\paragraph{Algorithm.}
In order to solve SDP \eqref{eq:SDP.form.0.blocks} with CTP on each subset of blocks, we use  \cite[Algorithm 1]{yurtsever2019conditional} due to Yurtsever et al. to describe
Algorithm \ref{alg:CGAL.blocks} with the following setting:
$\mathcal X\leftarrow \Delta:=\prod_{j\in[p]}\Delta_j$,
     $\mathcal K\leftarrow  \{\mathbf b\}$, $p\leftarrow \zeta$,
     $Ax \leftarrow \mathcal A \mathbf X$, $f(x)\leftarrow \left< \mathbf C,\mathbf X\right>$,
    $\lambda_0 \leftarrow 1$, $\lambda_k  \leftarrow \beta_k$, $\sigma_k\leftarrow \gamma_k$.
     $D_{\mathcal{Y}_{k+1}}  \leftarrow K$,
     $L_f \leftarrow 0$, $\bar r_{k+1} \leftarrow \mathbf b$,
     $D_{\mathcal{X}}^2 \leftarrow 2\sum_{j\in[p]}a_j^2$,
     $v_k \leftarrow \mathbf C +\mathcal A^\top \mathbf z_k$,
    $\argmin_{x\in \mathcal X} \left<v_k,x\right> \leftarrow \argmin_{X\in \Delta} \left<\mathbf C +\mathcal A^\top \mathbf z_k,\mathbf X\right>$.

With fixed $\mathbf z_k$, we have:
\begin{equation*}
\begin{array}{rl}
     \min\limits_{\mathbf X\in \Delta} \left<\mathbf C +\mathcal A^\top \mathbf z_k,\mathbf X\right>& = \min\limits_{\diag((\mathbf X_j)_{j\in[p]})\in \prod_{j\in[p]}\Delta_j} \sum\limits_{j\in[p]}\left<\mathbf C_j +\mathcal A_j^\top \mathbf z_k,\mathbf X_j\right>  \\
     & = \sum\limits_{j\in[p]}\min\limits_{\mathbf X_j\in \Delta_j} \left<\mathbf C_j +\mathcal A_j^\top \mathbf z_k,\mathbf X_j\right> = \sum\limits_{j\in[p]} a_j\lambda_{\min}(\mathbf C_j +\mathcal A_j^\top \mathbf z_k)\,.
\end{array}
\end{equation*}
Let $\mathbf u_k^{(j)}$ be a uniform eigenvector corresponding to $\lambda_{\min}(\mathbf C_j +\mathcal A_j^\top \mathbf z_k)$, for $j\in[p]$.
Then one has
$\diag((a_j\mathbf u_k^{(j)}(\mathbf u_k^{(j)})^{\top})_{j\in[p]})\in \argmin_{\mathbf X\in \Delta} \left<\mathbf C +\mathcal A^\top \mathbf z_k,\mathbf X\right>$.
Thus we can set $s_k\leftarrow \diag((a_j\mathbf u_k^{(j)}(u_k^{(j)})^{\top})_{j\in[p]})$ in \cite[Algorithm 1]{yurtsever2019conditional}.

    \begin{algorithm}
    \small
    \caption{CGAL-SDP-CTP-Blocks}
    \label{alg:CGAL.blocks} 
    \textbf{Input:} SDP \eqref{eq:SDP.form.0.blocks} such that Assumption \ref{ass:general.assump.sdp.blocks} holds; Parameter $K>0$.\\
    \textbf{Output:} $((\mathbf X_j^{(t)})_{j\in [p]})_{t\in\N}$.
    \begin{algorithmic}[1]
    \State Set $(\mathbf X_j^{(0)})_{j\in[p]}:=(\mathbf 0_{\mathcal S})_{j\in[p]}$ and $\mathbf y_0:=\mathbf 0_{\R^\zeta}$.
 \For{$t\in\N$}{}
 \State Set $\beta_t:=\sqrt{t+1}$ and $\eta_t:=2/(t+1)$;
 \State Set $\mathbf z_t:=\mathbf y_{t-1}+\eta_t(\sum_{j\in[p]}\mathcal A_j \mathbf X_j^{(t-1)}-\mathbf b)$;
 \For{$j\in [p]$}{}
 \State Take a uniform eigenvector $\mathbf u_t^{(j)}$ corresponding to $\lambda_{\min}(\mathbf C_j +\mathcal A_j^\top \mathbf z_t)$;
 \State Set $\mathbf X_j^{(t)}:=(1-\eta_t)\mathbf X_j^{(t-1)}+\eta_t a_j \mathbf u_t^{(j)}(\mathbf u_t^{(j)})^\top$;
  \EndFor
 \State Select $\gamma_t$ as the larges $\gamma\in[0,1]$ such that: \\
    \hspace*{\algorithmicindent}\hspace*{\algorithmicindent} $\gamma \|\sum_{j\in[p]}\mathcal A_j \mathbf X_j^{(t)}-\mathbf b\|_2^2 \le \beta_t\eta_t^2(\sum_{j\in[p]}a_j^2)\|\mathcal A\|^2$ and  $\|\mathbf y_{t-1}+\gamma(\sum_{j\in[p]}\mathcal A_j \mathbf X_j^{(t)}-\mathbf b)\|_2\le K$;
 \State Set $\mathbf y_{t}=\mathbf y_{t-1}+\gamma_t(\sum_{j\in[p]}\mathcal A_j \mathbf X_j^{(t)} -\mathbf b)$.
 \EndFor
    \end{algorithmic}
    \end{algorithm}
    
Relying on \cite[Theorem 3.1]{yurtsever2019conditional}, we guarantee the convergence of the sequence $((\mathbf X_j^{(t)})_{j\in [p]})_{t\in\N}$ in Algorithm \ref{alg:CGAL.blocks} to the set of optimal solutions of SDP \eqref{eq:SDP.form.0.blocks} in the following theorem:
    \begin{theorem}
    Consider SDP \eqref{eq:SDP.form.0.blocks} such that Assumption \ref{ass:general.assump.sdp.blocks} holds. 
    Let $((\mathbf X_j^{(t)})_{j\in [p]})_{t\in\N}$ be  the output of Algorithm \ref{alg:CGAL.blocks}. 
    Then $\mathbf X_j^{(t)}\succeq 0$, for all $j\in[p]$ and for all $t\in\N$ and 
    $\left\|\sum_{j\in[p]}\mathcal A_j \mathbf X_j^{(t)}-\mathbf b\right\|_2\to 0$ and $ \left|\sum_{j\in[p]}\left<\mathbf C_j, \mathbf X_j^{(t)}\right>-\tau\right| \to 0$ as $t\to \infty$
    with the rate $\mathcal O(\sqrt t)$.
    \end{theorem}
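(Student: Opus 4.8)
The plan is to reduce the block-structured SDP \eqref{eq:SDP.form.0.blocks} to the single-cone template treated in \cite{yurtsever2019conditional} and then invoke \cite[Theorem 3.1]{yurtsever2019conditional} essentially verbatim. First I would use the CTP part of Assumption \ref{ass:general.assump.sdp.blocks}: every feasible point $(\mathbf X_j)_{j\in[p]}$ of \eqref{eq:SDP.form.0.blocks} satisfies $\mathbf X_j\succeq 0$ and $\trace(\mathbf X_j)=a_j$, hence $\mathbf X_j\in\Delta_j$; conversely every $\diag((\mathbf X_j)_{j\in[p]})\in\Delta:=\prod_{j\in[p]}\Delta_j$ lies in $\prod_{j\in[p]}\mathcal S_j^+$. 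Therefore \eqref{eq:SDP.form.0.blocks} is equivalent to
\[
\tau=\min_{\mathbf X\in\Delta}\{\left<\mathbf C,\mathbf X\right>\,:\,\mathcal A\mathbf X=\mathbf b\}\,,
\]
with $\mathbf C=\diag((\mathbf C_j)_{j\in[p]})$ and $\mathcal A\mathbf X=\sum_{j\in[p]}\mathcal A_j\mathbf X_j$ as in the paragraph preceding Algorithm \ref{alg:CGAL.blocks}. This is exactly the problem form of \cite{yurtsever2019conditional}, with compact convex domain $\mathcal X=\Delta$, affine-image constraint set $\mathcal K=\{\mathbf b\}$, and linear (hence $0$-smooth) objective $f(\mathbf X)=\left<\mathbf C,\mathbf X\right>$, so that $L_f=0$ and $v_k=\mathbf C+\mathcal A^\top\mathbf z_k$.

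Next I would check that the quantities fed into \cite[Algorithm 1]{yurtsever2019conditional} coincide with those in Algorithm \ref{alg:CGAL.blocks}. Each $\Delta_j$ is a spectraplex scaled by $a_j$, so $\diam(\Delta_j)\le\sqrt 2\,a_j$ in Frobenius norm and $D_{\mathcal X}^2\le 2\sum_{j\in[p]}a_j^2$; the dual-iterate bound $K$ gives $D_{\mathcal Y_{k+1}}\leftarrow K$; and $\bar r_{k+1}=\mathbf b$. The only substantive verification is that the linear minimization oracle $\argmin_{\mathbf X\in\Delta}\left<\mathbf C+\mathcal A^\top\mathbf z_k,\mathbf X\right>$ is correctly implemented by the inner $j$-loop (Steps 5--8): since both the objective and the set $\Delta$ split over the blocks, the minimum equals $\sum_{j\in[p]}a_j\lambda_{\min}(\mathbf C_j+\mathcal A_j^\top\mathbf z_k)$, attained at $\diag((a_j\mathbf u_t^{(j)}(\mathbf u_t^{(j)})^\top)_{j\in[p]})$ with $\mathbf u_t^{(j)}$ a unit eigenvector associated with $\lambda_{\min}(\mathbf C_j+\mathcal A_j^\top\mathbf z_t)$. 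Likewise the step $\beta_t=\sqrt{t+1}$, $\eta_t=2/(t+1)$ and the $\gamma_t$ line rule match the $\lambda_k\leftarrow\beta_k$, $\sigma_k\leftarrow\gamma_k$ choices of \cite[Algorithm 1]{yurtsever2019conditional}. Hence Algorithm \ref{alg:CGAL.blocks} is an instance of that scheme.

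Finally I would invoke \cite[Theorem 3.1]{yurtsever2019conditional}: since strong duality holds by Assumption \ref{ass:general.assump.sdp.blocks}(1) and $\Delta$ is compact, that result yields asymptotic feasibility $\left\|\sum_{j\in[p]}\mathcal A_j\mathbf X_j^{(t)}-\mathbf b\right\|_2\to 0$ together with objective convergence $\left|\sum_{j\in[p]}\left<\mathbf C_j,\mathbf X_j^{(t)}\right>-\tau\right|\to 0$, both at rate $\mathcal O(1/\sqrt t)$. The psd property $\mathbf X_j^{(t)}\succeq 0$ for all $j\in[p]$, $t\in\N$, follows by induction: $\mathbf X_j^{(0)}=\mathbf 0$, and Step 7 makes $\mathbf X_j^{(t)}$ a convex combination of $\mathbf X_j^{(t-1)}\succeq 0$ and $a_j\mathbf u_t^{(j)}(\mathbf u_t^{(j)})^\top\succeq 0$. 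The main obstacle is purely bookkeeping: threading the product structure consistently through the oracle, the diameter constant $D_{\mathcal X}$, and the step-size rules so that the hypotheses of \cite[Theorem 3.1]{yurtsever2019conditional} are literally satisfied; there is no new analytic difficulty beyond what is already established there.
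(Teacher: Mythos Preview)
Your proposal is correct and follows essentially the same route as the paper: the paper sets up the identification $\mathcal X\leftarrow\Delta=\prod_{j\in[p]}\Delta_j$, $\mathcal K\leftarrow\{\mathbf b\}$, $f(\mathbf X)\leftarrow\langle\mathbf C,\mathbf X\rangle$, $L_f\leftarrow 0$, $D_{\mathcal X}^2\leftarrow 2\sum_{j\in[p]}a_j^2$, verifies that the linear minimization oracle over $\Delta$ decomposes blockwise into minimum-eigenvalue computations, and then simply invokes \cite[Theorem 3.1]{yurtsever2019conditional} without further argument. Your write-up in fact supplies slightly more detail (the induction for $\mathbf X_j^{(t)}\succeq 0$ and the explicit diameter bound) than the paper does.
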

\begin{remark}
Before running Algorithm \ref{alg:CGAL.blocks}, we  scale the problem's input as follows:
$\|\mathbf C\|_F=\|\mathcal A\|=a_1=\dots=a_p=1$ and $ \|\mathbf A^{(1)}\|_F=\dots=\|\mathbf A^{(\zeta)}\|_F$.
\end{remark}
\begin{remark}
Given $\varepsilon>0$, the for loop in Algorithm \ref{alg:CGAL.blocks} terminates when:
\begin{equation*}
    \frac{|\sum_{j\in[p]}\left<\mathbf C_j, \mathbf X_j^{(t-1)}\right> - \sum_{j\in[p]}(a_j\lambda_{\min}(\mathbf C_j +\mathcal A_j^\top \mathbf z_t) - \mathbf b^\top \mathbf y_{t-1})|}{1+\max\{|\sum_{j\in[p]}\left<\mathbf C_j, \mathbf X_j^{(t-1)}\right>|, |\sum_{j\in[p]}(a_j\lambda_{\min}(\mathbf C_j +\mathcal A_j^\top \mathbf z_t) - \mathbf b^\top \mathbf y_{t-1})|\}}\le \varepsilon
\end{equation*}
and $\|\sum_{j\in[p]}\mathcal A_j \mathbf X_j^{(t-1)}-\mathbf b\|_2/\max\{1,\|b\|_2\} \le \varepsilon$. 
In our experiments, we choose $\varepsilon=10^{-2}$.
\end{remark}

\begin{remark}\label{re:implicit.block}
To save memory at each iteration, we can run Algorithm \ref{alg:CGAL.blocks} with implicit $\mathbf X_j^{(t)}$, $j\in[p]$, by setting $\mathbf w_t:=\sum_{j\in[p]}\mathcal A_j \mathbf X_{j}^{(t)} -\mathbf b$.
In this case, Step 7 becomes $\mathbf w_t:=(1-\eta_t)\mathbf w_{t-1}+\eta_t[\sum_{j\in[p]} \mathcal A_j(a_j\mathbf u_t^{(j)}(\mathbf u_t^{(j)})^\top) -\mathbf b ] $.
Thus we only obtain an approximate dual solution $\mathbf y_t$ of SDP \eqref{eq:SDP.form.0.blocks} when Algorithm \ref{alg:CGAL.blocks} terminates. 
To recover an approximate primal solution $(\mathbf X_j)_{j\in[p]}$ of SDP \eqref{eq:SDP.form.0.blocks}, we do a process similar to steps 2 and 3 of Algorithm \ref{alg:sol.SDP.CTP.0.blocks} which will be presented later on.
\end{remark}
\subsection{Spectral method (SM)}
\label{sec:spectral.method}
\subsubsection{SDP with CTP}
\label{sec:spectral.method.dense}
Consider SDP with CTP described in Appendix \ref{sec:sdp.ctp}. The following assumption will be used later on:
\begin{assumption}\label{ass:general.assump.sdp.spectral}
Dual attainability: SDP \eqref{eq:SDP.form.dual.0} has an optimal solution.
\end{assumption}

\begin{lemma} \label{lem:obtain.dual.sol}
Let Assumption \ref{ass:general.assump.sdp} hold and let
$\varphi:\R^{\zeta}\to \R$ be a function defined by:
$\mathbf y\mapsto \varphi(\mathbf y)\,:=\,a\lambda_{\min}(\mathbf C-\mathcal{A}^\top\mathbf y)+\mathbf b^\top\mathbf y$.
Then:
\begin{equation}\label{eq:nonsmooth.hierarchy.0}
\tau=\sup_{\mathbf{y}\in \R^{\zeta}}\,\varphi(\mathbf y)\,.
\end{equation}
Moreover, if  Assumption \ref{ass:general.assump.sdp.spectral} holds, then problem \eqref{eq:nonsmooth.hierarchy.0} has an optimal solution.
\end{lemma}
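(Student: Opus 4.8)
The plan is to rewrite the primal SDP~\eqref{eq:SDP.form.0} by using the constant trace property to turn the conic feasible set into the scaled spectrahedron $\Delta=\{\mathbf X\in\mathcal S^+:\trace(\mathbf X)=a\}$, and then dualize the single affine constraint $\mathcal A\mathbf X=\mathbf b$ via a Lagrangian argument. First I would observe that, under Assumption~\ref{ass:general.assump.sdp}(2), every feasible $\mathbf X$ of~\eqref{eq:SDP.form.0} lies in $\Delta$; moreover the constraint $\trace(\mathbf X)=a$ is itself an affine consequence of $\mathcal A\mathbf X=\mathbf b$, so adding it does not change the feasible set. Hence
\[
\tau=\inf_{\mathbf X\in\Delta}\{\langle\mathbf C,\mathbf X\rangle:\mathcal A\mathbf X=\mathbf b\}.
\]
Then I would form the Lagrangian $\mathcal L(\mathbf X,\mathbf y)=\langle\mathbf C,\mathbf X\rangle-\mathbf y^\top(\mathcal A\mathbf X-\mathbf b)=\langle\mathbf C-\mathcal A^\top\mathbf y,\mathbf X\rangle+\mathbf b^\top\mathbf y$ and minimize over $\mathbf X\in\Delta$: since $\Delta$ is compact (trace bounded, psd) and linear functionals over it are minimized at extreme points $a\mathbf u\mathbf u^\top$ with $\|\mathbf u\|_2=1$, we get $\min_{\mathbf X\in\Delta}\langle\mathbf C-\mathcal A^\top\mathbf y,\mathbf X\rangle=a\lambda_{\min}(\mathbf C-\mathcal A^\top\mathbf y)$, so $\min_{\mathbf X\in\Delta}\mathcal L(\mathbf X,\mathbf y)=\varphi(\mathbf y)$.

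Next I would argue the equality $\tau=\sup_{\mathbf y}\varphi(\mathbf y)$. Weak duality $\varphi(\mathbf y)\le\tau$ is immediate from the max–min inequality. For the reverse inequality, I would note that $\varphi$ is, up to notation, exactly the dual objective of~\eqref{eq:SDP.form.dual.0}: indeed $\varphi(\mathbf y)\ge\mathbf b^\top\mathbf y$ whenever $\mathbf C-\mathcal A^\top\mathbf y\succeq0$ (then $\lambda_{\min}\ge0$), and for those $\mathbf y$ with $\lambda_{\min}(\mathbf C-\mathcal A^\top\mathbf y)<0$ one can increase $\varphi$ by the homogeneous shift trick available because $1\in$ the range relation — more cleanly, using CTP again: the matrix $a\mathbf I$ restricted to the block structure has $\mathcal A(a\mathbf I/ \text{(something)})$... rather than fuss, I would invoke that replacing $\mathbf C$ by $\mathbf C-\mathcal A^\top\mathbf y$ and using $\trace(\mathbf X)=a$ on the feasible set shows $\sup_{\mathbf y}\varphi(\mathbf y)$ equals the optimal value $\rho$ of~\eqref{eq:SDP.form.dual.0}, because for any $\mathbf y$ one can add a multiple of a fixed vector $\mathbf y^0$ with $\mathcal A^\top\mathbf y^0=\mathbf I_{\mathcal S}$ (such $\mathbf y^0$ exists precisely because the constant-trace identity $\langle\mathbf I,\mathbf X\rangle=a$ is in the span of the rows $\langle\mathbf A_i,\mathbf X\rangle=b_i$) to make $\mathbf C-\mathcal A^\top\mathbf y\succeq0$ without decreasing $\varphi$. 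Thus $\sup_{\mathbf y}\varphi(\mathbf y)=\rho$, and $\rho=\tau$ by Assumption~\ref{ass:general.assump.sdp}(1).

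Finally, for attainment: if Assumption~\ref{ass:general.assump.sdp.spectral} holds, SDP~\eqref{eq:SDP.form.dual.0} has an optimal solution $\mathbf y^\star$ with $\mathbf C-\mathcal A^\top\mathbf y^\star\succeq0$, so $\varphi(\mathbf y^\star)=\mathbf b^\top\mathbf y^\star+a\cdot 0=\rho=\tau$, i.e. $\mathbf y^\star$ is optimal for~\eqref{eq:nonsmooth.hierarchy.0}. The main obstacle I anticipate is making the "shift by $\mathbf y^0$" step fully rigorous — that is, showing that the all-ones trace functional $\mathbf X\mapsto\trace(\mathbf X)$ lies in the linear span of $\{\mathbf X\mapsto\langle\mathbf A_i,\mathbf X\rangle\}_{i\in[\zeta]}$ (equivalently that there is $\mathbf y^0$ with $\sum_i y^0_i\mathbf A_i=\mathbf I_{\mathcal S}$), which is exactly the content of CTP via a standard separating-hyperplane/affine-hull argument; once that is in hand the equivalence $\sup\varphi=\rho$ and hence the whole lemma follow routinely.
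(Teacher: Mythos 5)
Your argument is correct and is essentially the route the paper takes: the paper proves the block version (Lemma \ref{lem:obtain.dual.sol.blocks}) by appending the redundant constraint $\trace(\mathbf X)=a$ to the primal and reading off the dual of the augmented SDP, which is exactly the partial-Lagrangian computation you carry out over $\Delta=\{\mathbf X\in\mathcal S^+:\trace(\mathbf X)=a\}$ using $\min_{\mathbf X\in\Delta}\langle\mathbf M,\mathbf X\rangle=a\lambda_{\min}(\mathbf M)$. The one substantive remark is that the ``main obstacle'' you flag is not an obstacle: once you have $\varphi(\mathbf y)\le\tau$ for every $\mathbf y$ (your max--min step) and $\varphi(\mathbf y)\ge\mathbf b^\top\mathbf y$ for every dual-feasible $\mathbf y$, condition 1 of Assumption \ref{ass:general.assump.sdp} gives $\tau=\rho\le\sup_{\mathbf y}\varphi(\mathbf y)\le\tau$, and no shift by a vector $\mathbf y^0$ with $\mathcal A^\top\mathbf y^0=\mathbf I$ is needed (such a $\mathbf y^0$ does exist under CTP, since an affine functional vanishing on the nonempty affine set $\{\mathcal A\mathbf X=\mathbf b\}$ lies in the span of the defining functionals, but you can simply delete that detour). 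For the attainment claim, phrase it as $\tau\ge\varphi(\mathbf y^\star)\ge\mathbf b^\top\mathbf y^\star=\rho=\tau$ rather than asserting $\lambda_{\min}(\mathbf C-\mathcal A^\top\mathbf y^\star)=0$ a priori; the latter is a consequence, not a hypothesis.
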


Notice that  $\varphi$ in Lemma \ref{lem:obtain.dual.sol} is concave and continuous but not differentiable in general. The subdifferential of $\varphi$ at $\mathbf y$ reads:
$\partial \varphi(\mathbf y)=\{\mathbf b-a\mathcal{A}\mathbf U: \mathbf U\in \conv(\Gamma(\mathbf C-\mathcal{A}^\top\mathbf y))\}$,
where for each $\mathbf A\in \mathcal{S}$, 
$\Gamma(\mathbf A):=\{\mathbf u\mathbf u^\top\ :\ \mathbf A\mathbf u=\lambda_{\min}(\mathbf A)\mathbf u\ ,\ \|\mathbf u\|_2=1\}$.

Given $r\in\N^{\ge 1}$ and $\mathbf u_j\in \R^s$, $j\in[r]$, consider the following convex quadratic optimization problem (QP):
    \begin{equation}\label{eq:socp.sdp.sol}
    \begin{array}{rl}
        \min\limits_{ \xi\in\R^r} &\frac{1}2\left\|\mathbf b- a\mathcal{A}\left(\sum_{j\in[r]}\xi_j\mathbf u_j\mathbf u_j^\top\right)\right\|_2^2\\
        \text{s.t.}& \sum_{j\in[r]} \xi_j=1\,;\:\xi_j\ge 0\,,\,j\in [r]\,.
        \end{array}
    \end{equation}

Next, we describe 
Algorithm \ref{alg:sol.SDP.CTP.0} to solve  SDP \eqref{eq:SDP.form.0}, which is based on nonsmooth first-order optimization methods (e.g., LMBM \cite[Algorithm 1]{haarala2007globally}).

    \begin{algorithm}
    \small
    \caption{Spectral-SDP-CTP}
    \label{alg:sol.SDP.CTP.0} 
    \textbf{Input:} SDP \eqref{eq:SDP.form.0} with unknown optimal value and optimal solution;\\
    \hspace*{\algorithmicindent}\hspace*{\algorithmicindent} method (T) for solving convex nonsmooth unconstrained optimization problems (NSOP). \\
    \textbf{Output:} the optimal value $\rho$ and the optimal solution $\mathbf X^\star$ of SDP \eqref{eq:SDP.form.0}.
    \begin{algorithmic}[1]
    \State Compute the optimal value $\tau$ and an optimal solution $\mathbf{\bar y}$ of the  NSOP \eqref{eq:nonsmooth.hierarchy.0} by using method (T);
 \State Compute $\lambda_{\min}(\mathbf C-\mathcal{A}^\top\mathbf{\bar y})$ and its corresponding uniform eigenvectors $\mathbf u_1,\dots,\mathbf u_r$;
 \State Compute an optimal solution $(\bar \xi_1,\dots,\bar \xi_r)$ of QP \eqref{eq:socp.sdp.sol} and set $\mathbf X^\star=a\sum_{j=1}^r \bar \xi_j\mathbf u_j\mathbf u_j^\top$.
    \end{algorithmic}
    \end{algorithm}

\begin{corollary}\label{coro:well-defined.sdp.by.nonsmooth}
Let Assumption \ref{ass:general.assump.sdp} hold. Assume that the method (T) is globally convergent for NSOP \eqref{eq:nonsmooth.hierarchy.0} (e.g., (T) is LMBM).
Then output $\tau$ of Algorithm \ref{alg:sol.SDP.CTP.0} is well-defined.
Moreover, if Assumption \ref{ass:general.assump.sdp.spectral} holds, the vector $\bar {\mathbf y}$ mentioned at Step 1 of Algorithm \ref{alg:sol.SDP.CTP.0} exists and thus the output $\mathbf X^\star$ of Algorithm \ref{alg:sol.SDP.CTP.0} is well-defined.
\end{corollary}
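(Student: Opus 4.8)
The plan is to reduce the statement to Lemma~\ref{lem:obtain.dual.sol} together with two elementary facts: every real symmetric matrix admits an eigendecomposition, and every continuous function attains its minimum on a nonempty compact set. I first treat the output $\tau$. By Assumption~\ref{ass:general.assump.sdp}(1), $\rho=\tau\in\R$, so the optimal value of SDP~\eqref{eq:SDP.form.0} is a finite real number, and by Lemma~\ref{lem:obtain.dual.sol} this number equals $\sup_{\mathbf y\in\R^\zeta}\varphi(\mathbf y)$, where $\varphi(\mathbf y)=a\,\lambda_{\min}(\mathbf C-\mathcal A^\top\mathbf y)+\mathbf b^\top\mathbf y$ is concave and continuous on all of $\R^\zeta$. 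Hence $-\varphi$ is a genuine convex, finite-valued, bounded-below nonsmooth function, so NSOP~\eqref{eq:nonsmooth.hierarchy.0} is a well-posed minimization problem with optimal value $-\tau$. Since method~(T) is globally convergent for this NSOP by hypothesis, Step~1 of Algorithm~\ref{alg:sol.SDP.CTP.0} returns exactly $\tau$; this establishes the first assertion.

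Now assume in addition that Assumption~\ref{ass:general.assump.sdp.spectral} holds, that is, SDP~\eqref{eq:SDP.form.dual.0} is attained. Then the second part of Lemma~\ref{lem:obtain.dual.sol} gives that the supremum defining $\tau$ is attained, so NSOP~\eqref{eq:nonsmooth.hierarchy.0} has an optimal solution; this is the vector $\bar{\mathbf y}$ produced at Step~1. It remains to check Steps~2 and~3. For Step~2, $\mathbf C-\mathcal A^\top\bar{\mathbf y}$ is a real symmetric matrix, so $\lambda_{\min}(\mathbf C-\mathcal A^\top\bar{\mathbf y})$ is a well-defined real number and its eigenspace has a dimension $r\ge 1$, from which one extracts an orthonormal family $\mathbf u_1,\dots,\mathbf u_r$. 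For Step~3, the objective of QP~\eqref{eq:socp.sdp.sol} is a continuous (indeed convex quadratic) function of $\xi$, and its feasible set $\{\xi\in\R^r:\ \xi_j\ge 0,\ \sum_{j\in[r]}\xi_j=1\}$ is nonempty and compact, so a minimizer $(\bar\xi_1,\dots,\bar\xi_r)$ exists by the Weierstrass theorem. Consequently $\mathbf X^\star=a\sum_{j=1}^r\bar\xi_j\mathbf u_j\mathbf u_j^\top$ is well-defined, which is the second assertion.

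The only point that deserves attention, rather than being pure bookkeeping, is to make sure that ``globally convergent for NSOP~\eqref{eq:nonsmooth.hierarchy.0}'' carries the content needed at Step~1: one must know that $-\varphi$ never equals $+\infty$ (true since the smallest eigenvalue of a symmetric matrix is always finite) and is bounded below on $\R^\zeta$ (true since $-\varphi\ge-\tau$ by Lemma~\ref{lem:obtain.dual.sol} and $\tau\in\R$), so that the convergence hypothesis on (T) is applicable and its limiting value is genuinely $-\tau$.

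Beyond the well-definedness claimed in the corollary, one can also justify calling $\mathbf X^\star$ an ``optimal solution'' of SDP~\eqref{eq:SDP.form.0} by a complementary-slackness argument: by Assumption~\ref{ass:general.assump.sdp}(1) there is a primal optimizer $\mathbf X_0\succeq 0$ with $\mathcal A\mathbf X_0=\mathbf b$ and $\left<\mathbf C,\mathbf X_0\right>=\tau$, hence $\trace(\mathbf X_0)=a$ by Assumption~\ref{ass:general.assump.sdp}(2); combined with $\varphi(\bar{\mathbf y})=\tau$ this forces $\left<\mathbf C-\mathcal A^\top\bar{\mathbf y},\mathbf X_0\right>=a\,\lambda_{\min}(\mathbf C-\mathcal A^\top\bar{\mathbf y})$, so that $\mathbf X_0/a$ is a density matrix supported on the $\lambda_{\min}$-eigenspace of $\mathbf C-\mathcal A^\top\bar{\mathbf y}$. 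If the orthonormal eigenbasis of Step~2 is chosen compatibly with $\mathbf X_0$ (which is automatic when $\lambda_{\min}$ is simple, the case $r=1$), then the objective of QP~\eqref{eq:socp.sdp.sol} vanishes at the associated $\xi$, whence the minimum in~\eqref{eq:socp.sdp.sol} is $0$ and $\mathbf X^\star$ satisfies $\mathcal A\mathbf X^\star=\mathbf b$, $\mathbf X^\star\succeq 0$, $\trace(\mathbf X^\star)=a$ and $\left<\mathbf C,\mathbf X^\star\right>=a\,\lambda_{\min}(\mathbf C-\mathcal A^\top\bar{\mathbf y})+\mathbf b^\top\bar{\mathbf y}=\tau$. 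This last arrangement of the eigenbasis is the one delicate ingredient; everything else is routine.
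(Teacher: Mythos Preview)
Your main argument for the two claims actually stated in the corollary---that the value $\tau$ and the point $\bar{\mathbf y}$, and hence the matrix $\mathbf X^\star$, are well-defined---is correct and is essentially the route the paper takes: invoke Lemma~\ref{lem:obtain.dual.sol} for $\tau$ and for the existence of $\bar{\mathbf y}$, then use the spectral theorem and compactness of the simplex for Steps~2--3. This matches the paper, which (for the block version) simply points to Lemmas~\ref{lem:obtain.dual.sol.blocks}, \ref{lem:extract.sdp.solu.blocks} and an external lemma without further detail.

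Your last paragraph, however, goes beyond the corollary and tries to show that $\mathbf X^\star$ is actually an optimal solution of SDP~\eqref{eq:SDP.form.0}. Here your approach diverges from the paper's and contains a gap. You invoke a primal optimizer $\mathbf X_0$ ``by Assumption~\ref{ass:general.assump.sdp}(1)'', but that assumption only asserts $\rho=\tau\in\R$; primal attainment is \emph{not} part of it. The paper's route (see the proof of Lemma~\ref{lem:extract.sdp.solu.blocks}) avoids this entirely: from $\mathbf 0\in\partial\varphi(\bar{\mathbf y})$ and the subdifferential formula $\partial\varphi(\mathbf y)=\{\mathbf b-a\,\mathcal A\mathbf U:\mathbf U\in\conv(\Gamma(\mathbf C-\mathcal A^\top\mathbf y))\}$, one obtains directly a density matrix $\mathbf U$ on the $\lambda_{\min}$-eigenspace with $a\,\mathcal A\mathbf U=\mathbf b$, so the QP minimum is $0$ and $\mathbf X^\star=a\mathbf U$ is feasible with value $\tau$. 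No primal optimizer is needed. You did correctly flag the eigenbasis-compatibility issue (whether a general such $\mathbf U$ is diagonal in the basis $\mathbf u_1,\dots,\mathbf u_r$ fixed at Step~2); that subtlety is present in both approaches and is what the paper defers to \cite[Lemma~A.2]{mai2020hierarchy}.
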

\subsubsection{SDP with CTP on each subset of blocks}
\label{app:spectral.SDP.ctp}

Consider SDP with CTP on each subset of blocks described in Appendix \ref{sec:sdp.ctp.blocks}.

The following assumption will be used later on:
\begin{assumption}\label{ass:general.assump.sdp.blocks.spectral}
Dual attainability: SDP \eqref{eq:SDP.form.dual.0.blocks} has an optimal solution.
\end{assumption}

\begin{lemma} \label{lem:obtain.dual.sol.blocks}
Let Assumption \ref{ass:general.assump.sdp.blocks} hold and let
$\psi:\R^{\zeta}\to \R$ be a function defined by:
$\mathbf y\mapsto \psi(\mathbf y)\,:=\,\mathbf b^\top\mathbf y+\sum_{j\in[p]}a_j\lambda_{\min}(\mathbf C_j-\mathcal A_{j}^\top \mathbf y)$.
Then:
\begin{equation}\label{eq:nonsmooth.hierarchy.0.blocks}
\tau=\sup_{\mathbf{y}\in \R^{\zeta}}\,\psi(\mathbf y)\,.
\end{equation}
Moreover, if 
of Assumption \ref{ass:general.assump.sdp.blocks.spectral} holds, then problem \eqref{eq:nonsmooth.hierarchy.0.blocks} has an optimal solution.
\end{lemma}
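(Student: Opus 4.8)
The plan is to prove Lemma~\ref{lem:obtain.dual.sol.blocks} along the same Lagrangian/minimax lines as its single-block counterpart Lemma~\ref{lem:obtain.dual.sol}, the only genuinely new point being that \emph{block-wise} CTP lets the dual vector act independently on each block. It is worth stressing why one cannot merely invoke Lemma~\ref{lem:obtain.dual.sol} for the combined SDP $\tau=\inf\{\langle\mathbf C,\mathbf X\rangle:\mathcal A\mathbf X=\mathbf b,\ \mathbf X\in\prod_{j\in[p]}\mathcal S_j^+\}$: that would yield the function $\mathbf y\mapsto\bigl(\sum_{j\in[p]}a_j\bigr)\lambda_{\min}(\mathbf C-\mathcal A^\top\mathbf y)+\mathbf b^\top\mathbf y$, and since $\mathbf C-\mathcal A^\top\mathbf y=\diag\bigl((\mathbf C_j-\mathcal A_j^\top\mathbf y)_{j\in[p]}\bigr)$ one has $\lambda_{\min}(\mathbf C-\mathcal A^\top\mathbf y)=\min_{j\in[p]}\lambda_{\min}(\mathbf C_j-\mathcal A_j^\top\mathbf y)$, which is in general strictly smaller than the weighted sum appearing in $\psi$. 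Recovering $\psi$ exactly forces us to use that each trace $\trace(\mathbf X_j)$ is \emph{separately} constant on the feasible set.

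First I would rewrite $\psi$ as a partial Lagrangian. Since the minimum of a linear functional over the scaled spectraplex $\Delta_j=\{\mathbf X_j\in\mathcal S_j^+:\trace(\mathbf X_j)=a_j\}$ equals $a_j\lambda_{\min}(\cdot)$, attained at $a_j\mathbf u\mathbf u^\top$ for a unit bottom-eigenvector $\mathbf u$ (the block structure of $\mathcal S_j$ being harmless because $\mathbf C_j-\mathcal A_j^\top\mathbf y\in\mathcal S_j$ too), one gets
\[
\psi(\mathbf y)=\mathbf b^\top\mathbf y+\sum_{j\in[p]}\inf_{\mathbf X_j\in\Delta_j}\langle\mathbf C_j-\mathcal A_j^\top\mathbf y,\mathbf X_j\rangle
=\inf_{(\mathbf X_j)\in\prod_{j\in[p]}\Delta_j}\Bigl(\sum_{j\in[p]}\langle\mathbf C_j,\mathbf X_j\rangle+\mathbf y^\top\bigl(\mathbf b-\sum_{j\in[p]}\mathcal A_j\mathbf X_j\bigr)\Bigr),
\]
and the fact that the infimum over the \emph{product} $\prod_{j\in[p]}\Delta_j$ factorizes across blocks is exactly where block-wise CTP enters. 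For every feasible $\mathbf X$ of \eqref{eq:SDP.form.0.blocks} the bracket equals $\sum_{j\in[p]}\langle\mathbf C_j,\mathbf X_j\rangle$, and such an $\mathbf X$ lies in $\prod_{j\in[p]}\Delta_j$ by Assumption~\ref{ass:general.assump.sdp.blocks}(2), so $\psi(\mathbf y)\le\tau$ for all $\mathbf y$ (``weak duality''). For the reverse inequality I would apply Sion's minimax theorem to the jointly affine saddle function above over the compact convex set $\prod_{j\in[p]}\Delta_j$ and over $\R^\zeta$: its $\inf_{(\mathbf X_j)}\sup_{\mathbf y}$ value equals $\tau$ (the inner supremum being the objective on the feasible set --- which, by CTP, is a closed, hence compact, subset of $\prod_{j\in[p]}\Delta_j$, and nonempty since $\tau=\rho\in\R$ by Assumption~\ref{ass:general.assump.sdp.blocks}(1) --- and $+\infty$ elsewhere), while its $\sup_{\mathbf y}\inf_{(\mathbf X_j)}$ value equals $\sup_{\mathbf y\in\R^\zeta}\psi(\mathbf y)$. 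Hence $\tau=\sup_{\mathbf y}\psi(\mathbf y)$, which is \eqref{eq:nonsmooth.hierarchy.0.blocks}.

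For the last assertion, assume Assumption~\ref{ass:general.assump.sdp.blocks.spectral} and let $\mathbf y^\sharp$ be optimal for \eqref{eq:SDP.form.dual.0.blocks}. The primal feasible set of \eqref{eq:SDP.form.0.blocks} is again a nonempty compact (by CTP), hence has an optimal $\mathbf X^\sharp=(\mathbf X_j^\sharp)_{j\in[p]}$, and combining $\langle\mathbf C,\mathbf X^\sharp\rangle=\tau=\rho=\mathbf b^\top\mathbf y^\sharp$ with $\sum_{j\in[p]}\mathcal A_j\mathbf X_j^\sharp=\mathbf b$ gives $\sum_{j\in[p]}\langle\mathbf C_j-\mathcal A_j^\top\mathbf y^\sharp,\mathbf X_j^\sharp\rangle=0$. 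Each summand is the trace inner product of two positive semidefinite matrices --- the block-$j$ dual slack $\mathbf C_j-\mathcal A_j^\top\mathbf y^\sharp\succeq0$ (feasibility of $\mathbf y^\sharp$) and $\mathbf X_j^\sharp\succeq0$ --- hence vanishes, so $(\mathbf C_j-\mathcal A_j^\top\mathbf y^\sharp)\mathbf X_j^\sharp=0$; since $\trace(\mathbf X_j^\sharp)=a_j>0$ we have $\mathbf X_j^\sharp\neq0$, so the slack is singular, i.e. $\lambda_{\min}(\mathbf C_j-\mathcal A_j^\top\mathbf y^\sharp)=0$ for every $j\in[p]$. Substituting into $\psi$ gives $\psi(\mathbf y^\sharp)=\mathbf b^\top\mathbf y^\sharp=\tau=\sup_{\mathbf y}\psi(\mathbf y)$, so the supremum in \eqref{eq:nonsmooth.hierarchy.0.blocks} is attained at $\mathbf y^\sharp$. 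The step I expect to be the main obstacle is the clean identification of the Sion value with $\psi$ rather than with the function $\varphi$ of Lemma~\ref{lem:obtain.dual.sol} --- i.e. keeping the dual vector's action decoupled across the blocks --- which is precisely why one must minimize the Lagrangian over the product of the per-block scaled spectraplexes and invoke block-wise (rather than global) CTP; once that is set up, the remainder is a routine transcription of the single-block argument.
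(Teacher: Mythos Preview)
Your proof is correct. The paper takes a shorter, purely algebraic route: it appends the redundant per-block constraints $\langle\mathbf I_j,\mathbf X_j\rangle=a_j$ to the primal \eqref{eq:SDP.form.0.blocks}, writes down the dual of this augmented SDP (which now carries an extra scalar multiplier $\xi_j$ per block, with constraint $\mathbf C_j-\mathcal A_j^\top\mathbf y-\xi_j\mathbf I_j\succeq 0$), and then optimizes out $\xi_j=\lambda_{\min}(\mathbf C_j-\mathcal A_j^\top\mathbf y)$ to obtain $\psi$; the identity $\sup_{\mathbf y}\psi(\mathbf y)=\tau$ then drops out of strong duality, which is preserved because the added constraints are redundant and setting $\xi_j=0$ recovers the original dual. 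Your argument instead builds the Lagrangian directly over the compact product $\prod_j\Delta_j$ and invokes Sion's minimax theorem---heavier machinery, but self-contained and making the role of compactness explicit rather than relying on strong duality of an auxiliary SDP. For the attainment assertion the paper leaves the details implicit (in their setup it is immediate: any optimal $\mathbf y^\sharp$ of \eqref{eq:SDP.form.dual.0.blocks} together with $\xi_j=0$ is feasible for the augmented dual with value $\mathbf b^\top\mathbf y^\sharp=\tau$, and $\psi(\mathbf y^\sharp)$ is the partial supremum over $\xi$ at that $\mathbf y^\sharp$), whereas you spell out complementary slackness in full; both are valid, yours is more explicit.
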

\begin{proof}
From 
\eqref{eq:SDP.form.0.blocks} and Condition 4 
of Assumption \ref{ass:general.assump.sdp.blocks},
\begin{equation}
\tau = \inf_{\mathbf X_j\in \mathcal{S}_j^+} \,\left\{ \,\sum_{j\in[p]}\left< \mathbf C_j,\mathbf X_j\right>\,\left|\begin{array}{lr}\sum_{j\in[p]} \mathcal A_{j}\mathbf X_j=\mathbf  b\,,\\
\left< \mathbf I_j,\mathbf X_j\right>=a_j\,,\,j\in[p]
\end{array}\right.
\right\}\,,
\end{equation}
where $\mathbf I_j\in \mathcal S_j$ is the identity matrix, for $j\in[p]$.
Note that $\left< \mathbf I_j,\mathbf X_j\right>=\trace(\mathbf X_j)$, for $\mathbf X_j\in \mathcal{S}_j$, $j\in[p]$.
The dual of this SDP reads as:
\begin{equation}
\rho = \sup_{(\xi,\mathbf y)\in\R^{p+\zeta}}\, \left\{\, \sum_{j\in[p]}a_j\xi_j+\mathbf b^\top\mathbf y\,:\,
\mathbf C_j-\mathcal A_{j}^\top \mathbf y-\xi_j\mathbf I_j\in \mathcal{S}^+_j\,,j\in[p]\,\right\}\,.
\end{equation}
It implies that
$\rho=  \sup_{\xi,\mathbf y}\, \{\, \sum_{j\in[p]}a_j\xi_j+\mathbf b^\top\mathbf y\,:\,\xi_j\le \lambda_{\min}(\mathbf C_j-
\mathcal A_{j}^\top \mathbf y)\,,j\in[p]\,\}$.
From this, the result follows since $\rho=\tau$.
\end{proof}
\begin{proposition}\label{prop:properties.phi.0.blocks}
The function $\psi$ in Lemma \ref{lem:obtain.dual.sol.blocks} has the following properties:
\begin{enumerate}
\item  $\psi$ is concave and continuous but not differentiable in general.
\item The subdifferential of $\psi$ at $\mathbf y$ satisfies:
$\partial \psi(\mathbf y)= \mathbf b+\sum_{j\in[p]}a_j\partial \psi_j(\mathbf y)$,
where for every $j\in[p]$, $\psi_j:\R^{\zeta}\to \R$ is a function defined by $\psi_j(\mathbf y)=\lambda_{\min}(\mathbf C_j-\mathcal A_{j}^\top \mathbf y)$ and 
$\partial \psi_j(\mathbf y)=\{-\mathcal A_j\mathbf U
    \ :\ \mathbf U\in \conv(\Gamma(\mathbf C_j-\mathcal A_{j}^\top \mathbf y))\}$.
\end{enumerate}
\end{proposition}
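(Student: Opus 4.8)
The plan is to treat the two assertions separately, in each case reducing to standard facts of convex analysis applied to the concave function $\psi$ (equivalently, to the convex function $-\psi$). For the first assertion I would recall the variational characterization $\lambda_{\min}(\mathbf A)=\min_{\|\mathbf u\|_2=1}\mathbf u^\top\mathbf A\mathbf u=\min\{\left<\mathbf A,\mathbf U\right>:\mathbf U\succeq 0,\ \trace(\mathbf U)=1\}$, valid for any real symmetric $\mathbf A$, which exhibits $\mathbf A\mapsto\lambda_{\min}(\mathbf A)$ as a pointwise infimum of affine functionals, hence concave; continuity (indeed $1$-Lipschitz with respect to the spectral norm) follows from Weyl's inequality. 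Since $\mathbf y\mapsto\mathbf C_j-\mathcal A_j^\top\mathbf y$ is affine, each summand $a_j\lambda_{\min}(\mathbf C_j-\mathcal A_j^\top\mathbf y)$ is a nonnegative multiple of the composition of a concave continuous function with an affine map, hence concave and continuous; adding the affine term $\mathbf b^\top\mathbf y$ and summing over $j\in[p]$ preserves both properties. For the failure of differentiability it suffices to note that $\lambda_{\min}$ is not differentiable at any symmetric matrix whose least eigenvalue has multiplicity $\ge 2$, so $\psi$ inherits a kink in any instance where some $\mathbf C_j-\mathcal A_j^\top\mathbf y$ has a repeated smallest eigenvalue.

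\textbf{The subdifferential formula.} For the second assertion I would invoke three standard ingredients. First, the Moreau--Rockafellar sum rule, which applies without any qualification condition because the term $\mathbf b^\top\mathbf y$ is affine and each $\psi_j$ is finite-valued and continuous on all of $\R^{\zeta}$; this gives $\partial\psi(\mathbf y)=\mathbf b+\sum_{j\in[p]}a_j\,\partial\psi_j(\mathbf y)$. Second, the chain rule for composition with an affine map: writing $L\mathbf y:=\mathbf C_j-\mathcal A_j^\top\mathbf y$, whose linear part has adjoint $-\mathcal A_j$ (since $\mathcal A_j$ is the adjoint of $\mathcal A_j^\top$), one gets $\partial\psi_j(\mathbf y)=-\mathcal A_j\bigl(\partial\lambda_{\min}(L\mathbf y)\bigr)$. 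Third, the classical expression $\partial\lambda_{\min}(\mathbf A)=\conv\{\mathbf u\mathbf u^\top:\mathbf A\mathbf u=\lambda_{\min}(\mathbf A)\mathbf u,\ \|\mathbf u\|_2=1\}=\conv(\Gamma(\mathbf A))$, which one obtains from the SDP representation above by identifying the exposed face of the spectrahedron $\{\mathbf U\succeq 0,\ \trace(\mathbf U)=1\}$ on which the linear functional $\left<\mathbf A,\cdot\right>$ attains its minimum, namely the convex hull of the rank-one projectors onto unit eigenvectors for $\lambda_{\min}(\mathbf A)$. Combining the second and third ingredients yields $\partial\psi_j(\mathbf y)=\{-\mathcal A_j\mathbf U:\mathbf U\in\conv(\Gamma(\mathbf C_j-\mathcal A_j^\top\mathbf y))\}$, and substituting this into the sum rule gives the stated formula.

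\textbf{Main obstacle.} The only step that is not pure bookkeeping is establishing the formula for $\partial\lambda_{\min}$; the sum and affine-composition rules apply immediately because every function in sight is finite and continuous on the whole space, so no constraint qualification has to be checked. One should also keep in mind that, $\psi$ being concave, ``$\partial$'' here denotes the superdifferential (equivalently $-\partial(-\psi)$), and the identities above are to be read with that convention, consistently with the statement of the proposition.
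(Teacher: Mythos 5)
Your argument is correct and matches the paper's proof in substance: the paper likewise obtains concavity and continuity of $\psi$ as a positive combination of the concave, continuous functions $\psi_j$ plus an affine term, and gets the subdifferential identity from the sum rule, observing that all functions involved are finite on the whole space so no qualification condition is needed. You merely spell out the standard ingredients (the variational characterization of $\lambda_{\min}$, the affine chain rule with adjoint $-\mathcal A_j$, and $\partial\lambda_{\min}(\mathbf A)=\conv(\Gamma(\mathbf A))$) that the paper leaves implicit.
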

\begin{proof}
It is not hard to prove the first statement.
Indeed, $\psi$ is a positive combination of $\mathbf z\mapsto \mathbf b^\top\mathbf z$, $\psi_j$, $j\in[p]$, which are convex, continuous functions.
The second statement follows by applying the subdifferential sum rule and notice that the domains of $\mathbf z\mapsto \mathbf b^\top\mathbf z$, $\psi_j$, $j\in[p]$, are both $\R^n$.
\end{proof}

\begin{lemma}\label{lem:extract.sdp.solu.blocks}
 If $\mathbf{\bar z}$ is an optimal solution of NSOP \eqref{eq:nonsmooth.hierarchy.0.blocks},
then:
\begin{enumerate}
    \item For each $j\in[p]$, there exists $(\mathbf X_j^\star)\in a_j\conv(\Gamma(\mathbf C_j-\mathcal A_{j}^\top \bar{\mathbf z}))$, such that  $\sum_{j\in[p]}\mathcal{A}_j\mathbf X_j^\star=\mathbf b$.
    \item For $j\in[p]$, $\mathbf X^\star_j=a_j\sum_{i\in [r_j]} \bar \xi_{i,j}\mathbf u_{i,j}\mathbf u_{i,j}^\top$, where $(\mathbf u_{i,j})_{i\in [r_j]}$ are all uniform eigenvectors  corresponding to $\lambda_{\min}(\mathbf C_j-\mathcal{A}_j^\top\mathbf{\bar z})$ and $((\bar \xi_{i,j})_{i\in[r_j]})_{j\in[r]}$ is an optimal solution of the convex quadratic problem:
    \begin{equation}\label{eq:socp.sdp.sol.blocks}
    \begin{array}{rl}
        \min\limits_{ \xi_{i,j}} &\frac{1}2\left\|\mathbf b- \sum_{j\in[p]}a_j\mathcal{A}_j\left(\sum_{j\in [r_j]}\xi_{i,j}\mathbf u_{i,j}\mathbf u_{i,j}^\top\right)\right\|_2^2\\
        \text{s.t.}& \sum_{i\in[r_j]} \xi_{i,j}=1\,;\:\xi_{i,j}\ge 0\,,\,i\in [r_j]\,,j\in[p]\,.
        \end{array}
    \end{equation}
    \item $(\mathbf X_j^\star)_{j\in[p]}$ is an optimal solution of SDP \eqref{eq:SDP.form.0.blocks}.
\end{enumerate}
 
\end{lemma}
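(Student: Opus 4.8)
The key tool is Lemma \ref{lem:obtain.dual.sol.blocks}, which expresses $\tau$ as $\sup_{\mathbf y}\psi(\mathbf y)$ with $\psi(\mathbf y)=\mathbf b^\top\mathbf y+\sum_{j\in[p]}a_j\lambda_{\min}(\mathbf C_j-\mathcal A_j^\top\mathbf y)$, together with the subdifferential formula from Proposition \ref{prop:properties.phi.0.blocks}. First I would establish item 1. Since $\bar{\mathbf z}$ is an optimal solution of the concave maximization problem \eqref{eq:nonsmooth.hierarchy.0.blocks}, the first-order optimality condition reads $\mathbf 0\in\partial\psi(\bar{\mathbf z})$. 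By Proposition \ref{prop:properties.phi.0.blocks}, $\partial\psi(\bar{\mathbf z})=\mathbf b+\sum_{j\in[p]}a_j\partial\psi_j(\bar{\mathbf z})$ with $\partial\psi_j(\bar{\mathbf z})=\{-\mathcal A_j\mathbf U:\mathbf U\in\conv(\Gamma(\mathbf C_j-\mathcal A_j^\top\bar{\mathbf z}))\}$. Hence there exist matrices $\mathbf U_j\in\conv(\Gamma(\mathbf C_j-\mathcal A_j^\top\bar{\mathbf z}))$ such that $\mathbf b-\sum_{j\in[p]}a_j\mathcal A_j\mathbf U_j=\mathbf 0$. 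Setting $\mathbf X_j^\star:=a_j\mathbf U_j\in a_j\conv(\Gamma(\mathbf C_j-\mathcal A_j^\top\bar{\mathbf z}))$ gives $\sum_{j\in[p]}\mathcal A_j\mathbf X_j^\star=\mathbf b$, which is item 1.

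Next, item 2. Each element of $\Gamma(\mathbf C_j-\mathcal A_j^\top\bar{\mathbf z})$ is of the form $\mathbf u\mathbf u^\top$ for a unit-norm eigenvector $\mathbf u$ associated with $\lambda_{\min}(\mathbf C_j-\mathcal A_j^\top\bar{\mathbf z})$; the eigenspace is finite-dimensional, so by Carath\'eodory's theorem every point of $\conv(\Gamma(\mathbf C_j-\mathcal A_j^\top\bar{\mathbf z}))$ is a finite convex combination of such rank-one matrices, say using eigenvectors $\mathbf u_{1,j},\dots,\mathbf u_{r_j,j}$. Thus $\mathbf X_j^\star=a_j\sum_{i\in[r_j]}\xi_{i,j}\mathbf u_{i,j}\mathbf u_{i,j}^\top$ for some probability weights $(\xi_{i,j})$. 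The constraint $\sum_{j\in[p]}\mathcal A_j\mathbf X_j^\star=\mathbf b$ from item 1 then shows these weights make the objective of \eqref{eq:socp.sdp.sol.blocks} vanish; since that objective is nonnegative, any such weights are global minimizers, and conversely any optimal solution $(\bar\xi_{i,j})$ of \eqref{eq:socp.sdp.sol.blocks} must also achieve objective value $0$, hence yields $\sum_{j\in[p]}\mathcal A_j\mathbf X_j^\star=\mathbf b$ with $\mathbf X_j^\star=a_j\sum_{i\in[r_j]}\bar\xi_{i,j}\mathbf u_{i,j}\mathbf u_{i,j}^\top$. This proves item 2.

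Finally, item 3: feasibility and optimality. Feasibility of $(\mathbf X_j^\star)_{j\in[p]}$ for SDP \eqref{eq:SDP.form.0.blocks} is immediate: each $\mathbf X_j^\star$ is a nonnegative combination of psd rank-one matrices, hence $\mathbf X_j^\star\in\mathcal S_j^+$, and $\sum_{j\in[p]}\mathcal A_j\mathbf X_j^\star=\mathbf b$ by item 1. For optimality, I would compute the objective value: since each $\mathbf u_{i,j}$ is an eigenvector of $\mathbf C_j-\mathcal A_j^\top\bar{\mathbf z}$ for $\lambda_{\min}(\mathbf C_j-\mathcal A_j^\top\bar{\mathbf z})$, we have $\left<\mathbf C_j-\mathcal A_j^\top\bar{\mathbf z},\mathbf X_j^\star\right>=a_j\lambda_{\min}(\mathbf C_j-\mathcal A_j^\top\bar{\mathbf z})$ (using $\sum_i\bar\xi_{i,j}=1$). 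Summing over $j$ and using $\sum_{j\in[p]}\left<\mathcal A_j^\top\bar{\mathbf z},\mathbf X_j^\star\right>=\bar{\mathbf z}^\top\sum_{j\in[p]}\mathcal A_j\mathbf X_j^\star=\bar{\mathbf z}^\top\mathbf b$, I get $\sum_{j\in[p]}\left<\mathbf C_j,\mathbf X_j^\star\right>=\mathbf b^\top\bar{\mathbf z}+\sum_{j\in[p]}a_j\lambda_{\min}(\mathbf C_j-\mathcal A_j^\top\bar{\mathbf z})=\psi(\bar{\mathbf z})=\tau$. Hence $(\mathbf X_j^\star)_{j\in[p]}$ is a feasible point attaining the optimal value $\tau$ of SDP \eqref{eq:SDP.form.0.blocks}, so it is optimal. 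The main obstacle here is purely bookkeeping—correctly passing between the abstract convex-analytic optimality condition $\mathbf 0\in\partial\psi(\bar{\mathbf z})$ and the explicit eigenvector representation, and checking that the QP \eqref{eq:socp.sdp.sol.blocks} genuinely selects the right convex weights; none of these steps is deep, but care is needed to ensure the finite generating set of eigenvectors is the same one used in the QP.
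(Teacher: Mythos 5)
Your proposal is correct and follows essentially the same route as the paper: first-order optimality $\mathbf 0\in\partial\psi(\bar{\mathbf z})$ combined with the subdifferential formula of Proposition \ref{prop:properties.phi.0.blocks} gives item 1, item 2 follows by writing the convex-hull element as a finite convex combination of rank-one eigenvector matrices (your QP zero-objective argument fills in a step the paper leaves implicit), and item 3 is verified by the same feasibility check and the same chain of equalities showing $\sum_{j\in[p]}\langle\mathbf C_j,\mathbf X_j^\star\rangle=\psi(\bar{\mathbf z})=\tau$.
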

\begin{proof}
By \cite[Theorem 4.2]{bagirov2014introduction}, $\mathbf 0\in \partial \psi(\mathbf{\bar z})$.
Combining this with Proposition \ref{prop:properties.phi.0.blocks}.2, 
the first statement follows, which in turn implies  the second statement.
We next prove the third statement.
For $j\in[p]$, one has $\mathbf X^\star_j\succeq 0$ since $\mathbf X^\star_j=a_j\sum_{i\in[r_j]} \bar \xi_{i,j}\mathbf u_{i,j}\mathbf u_{i,j}^\top$ with $\bar\xi_{i,j}\ge 0$, $i\in[r_j]$. 
From this and since $\sum_{j\in[p]}\mathcal{A}_j\mathbf X^\star_j=\mathbf b$, 
$(\mathbf X^\star_j)_{j\in[p]}$ is a feasible solution of SDP \eqref{eq:SDP.form.0.blocks}. 
Moreover,
\[\begin{array}{rl}
    \sum_{j\in[p]}\left< \mathbf C_j, \mathbf X^\star_j\right>&=\sum_{j\in[p]}\left< \mathbf C_j-\mathcal{A}_j^\top\mathbf{\bar z}, \mathbf X^\star_j\right>+\sum_{j\in[p]}\left< \mathcal{A}_j^\top\mathbf{\bar z}, \mathbf X^\star_j\right>\\
    &=\sum_{j\in[p]}a_j\sum_{i\in [r_j]} \bar \xi_{i,j}\left< \mathbf C_j-\mathcal{A}_j^\top\mathbf{\bar z},\mathbf u_{i,j}\mathbf u_{i,j}^\top\right>+\sum_{j\in[p]}\mathbf{\bar z}^\top( \mathcal{A}_j \mathbf X^\star_j)\\
    &=\sum_{j\in[p]}a_j\sum_{i\in[r_j]} \bar \xi_{i,j}\mathbf u_{i,j}^\top( \mathbf C_j-\mathcal{A}_j^\top\mathbf{\bar z})\mathbf u_{i,j}+\mathbf{\bar z}^\top\mathbf b\\
    &=\sum_{j\in[p]}a_j\lambda_{\min}(\mathbf C_j-\mathcal{A}_j^\top\mathbf{\bar z})\sum_{i\in[r_j]} \bar \xi_{i,j}\|\mathbf u_{i,j}\|_2^2+\mathbf{\bar z}^\top\mathbf b\\
    &=\sum_{j\in[p]}a_j\lambda_{\min}(\mathbf C_j-\mathcal{A}_j^\top\mathbf{\bar z})\sum_{i\in[r_j]} \bar \xi_{i,j}+\mathbf{\bar z}^\top\mathbf b\\
    &=\sum_{j\in[p]}a_j\lambda_{\min}(\mathbf C_j-\mathcal{A}_j^\top\mathbf{\bar z})+\mathbf{\bar z}^T\mathbf b=\psi(\mathbf{\bar z})=\tau\,.
\end{array}\]
Thus, $\sum_{j\in[p]}\left< \mathbf C_j, \mathbf X_j^\star\right>=\tau$, yielding the third statement.
\end{proof}

Next, we describe 
Algorithm \ref{alg:sol.SDP.CTP.0.blocks} to solve  SDP \eqref{eq:SDP.form.0.blocks}, which is based on nonsmooth first-order optimization methods (e.g., LMBM \cite[Algorithm 1]{haarala2007globally}).

    \begin{algorithm}
    \small
    \caption{Spectral-SDP-CTP-Blocks}
    \label{alg:sol.SDP.CTP.0.blocks} 
    \textbf{Input:} SDP \eqref{eq:SDP.form.0.blocks} with unknown optimal value and optimal solution;\\
    \hspace*{\algorithmicindent}\hspace*{\algorithmicindent} method (T) for solving NSOP. \\
    \textbf{Output:} the optimal value $\rho$ and the optimal solution $(\mathbf X^\star_j)_{j\in[p]}$ of SDP \eqref{eq:SDP.form.0.blocks}.
    \begin{algorithmic}[1]
    \State Compute the optimal value $\tau$ and an optimal solution $\mathbf{\bar y}$ of the  NSOP \eqref{eq:nonsmooth.hierarchy.0.blocks} by using method (T);
 \State For every $j\in[p]$, compute $\lambda_{\min}(\mathbf C_j-\mathcal{A}_j^\top\mathbf{\bar y})$ and its corresponding uniform eigenvectors $\mathbf u_{i,j}$, $i\in[r_j]$;
 \State Compute an optimal solution $((\bar \xi_{i,j})_{i\in[r_j]})_{j\in[r]}$ of QP \eqref{eq:socp.sdp.sol.blocks} and set $\mathbf X^\star_j=a_j\sum_{i\in[r_j]} \bar \xi_{i,j}\mathbf u_{i,j}\mathbf u_{i,j}^\top$, $j\in[p]$.
    \end{algorithmic}
    \end{algorithm}
The fact that Algorithm \ref{alg:sol.SDP.CTP.0.blocks} is well-defined
under certain conditions is a corollary of lemmas \ref{lem:obtain.dual.sol.blocks},  \ref{lem:extract.sdp.solu.blocks} and \cite[Lemma A.2]{mai2020hierarchy}.
\begin{corollary}\label{coro:well-defined.sdp.by.nonsmooth.blocks}
Let Assumption \ref{ass:general.assump.sdp.blocks} hold. Assume that the method (T) is globally convergent for NSOP \eqref{eq:nonsmooth.hierarchy.0.blocks} (e.g., (T) is LMBM).
Then output $\tau$ of Algorithm \ref{alg:sol.SDP.CTP.0.blocks} is well-defined.
Moreover, if Assumption \ref{ass:general.assump.sdp.blocks.spectral} holds, the vector $\bar {\mathbf y}$ involved at Step 1 of Algorithm \ref{alg:sol.SDP.CTP.0.blocks} exists and thus the output $(\mathbf X^\star_j)_{j\in[p]}$ of Algorithm \ref{alg:sol.SDP.CTP.0.blocks} is well-defined.
\end{corollary}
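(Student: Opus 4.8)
The plan is to chain together Lemma \ref{lem:obtain.dual.sol.blocks}, Lemma \ref{lem:extract.sdp.solu.blocks} and \cite[Lemma A.2]{mai2020hierarchy}, following verbatim the argument used for the dense case in Corollary \ref{coro:well-defined.sdp.by.nonsmooth}. For the first assertion (well-definedness of the output $\tau$), I would observe that under Assumption \ref{ass:general.assump.sdp.blocks}, Lemma \ref{lem:obtain.dual.sol.blocks} identifies NSOP \eqref{eq:nonsmooth.hierarchy.0.blocks} as the unconstrained maximization of the concave function $\psi$ with optimal value exactly $\tau$; equivalently, $-\psi$ is a convex nonsmooth function to be minimized over $\R^{\zeta}$. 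Since method (T) is assumed globally convergent for this class of problems, the sequence of function values it produces converges to $\sup_{\mathbf y}\psi(\mathbf y)=\tau$, which is finite by the strong-duality part of Assumption \ref{ass:general.assump.sdp.blocks}. Hence Step 1 of Algorithm \ref{alg:sol.SDP.CTP.0.blocks} returns $\tau$ and its output is well-defined; note that this part needs only boundedness of the optimal value, not attainability.

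Next, assume additionally Assumption \ref{ass:general.assump.sdp.blocks.spectral}. Then the second statement of Lemma \ref{lem:obtain.dual.sol.blocks} guarantees that \eqref{eq:nonsmooth.hierarchy.0.blocks} has an optimal solution, i.e. the set of maximizers of $\psi$ is nonempty. I would then invoke \cite[Lemma A.2]{mai2020hierarchy} to pass from "method (T) is globally convergent" to "(T) produces a point $\bar{\mathbf y}$ lying in the argmax of $\psi$", which makes the vector $\bar{\mathbf y}$ used in Step 1 well-defined. Given such a $\bar{\mathbf y}$, Step 2 is just the computation, for each of the finitely many blocks $\mathbf C_j-\mathcal{A}_j^\top\bar{\mathbf y}$ with $j\in[p]$, of the smallest eigenvalue and its associated eigenvectors, which is always meaningful.

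Finally, for Step 3 I would appeal to Lemma \ref{lem:extract.sdp.solu.blocks}: item 1 shows that QP \eqref{eq:socp.sdp.sol.blocks} is feasible (there exist convex-combination coefficients $\bar\xi_{i,j}$ realizing $\sum_{j\in[p]}\mathcal{A}_j\mathbf X_j^\star=\mathbf b$ with $\mathbf X_j^\star\in a_j\conv(\Gamma(\mathbf C_j-\mathcal{A}_j^\top\bar{\mathbf y}))$), while its feasible set, a product of standard simplices, is compact, so the convex quadratic objective attains its minimum; hence $(\mathbf X_j^\star)_{j\in[p]}$ produced at Step 3 is well-defined, and item 3 of Lemma \ref{lem:extract.sdp.solu.blocks} certifies that it is indeed an optimal solution of SDP \eqref{eq:SDP.form.0.blocks}. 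The only genuinely delicate step is the middle one: turning the global convergence of (T) into the availability of an \emph{exact} maximizer $\bar{\mathbf y}$ rather than a merely near-optimal value — this is exactly the role of \cite[Lemma A.2]{mai2020hierarchy}, and everything else is bookkeeping over the $p$ blocks.
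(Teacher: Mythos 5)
Your proposal is correct and follows exactly the route the paper intends: the paper itself states that this corollary follows from Lemma \ref{lem:obtain.dual.sol.blocks} (value and attainability of NSOP \eqref{eq:nonsmooth.hierarchy.0.blocks}), Lemma \ref{lem:extract.sdp.solu.blocks} (feasibility and optimality of the recovery in Step 3), and \cite[Lemma A.2]{mai2020hierarchy}, which is precisely the chain you assemble. Your block-by-block bookkeeping and the remark that the first assertion needs only finiteness of the optimal value, not attainability, are consistent with the paper's argument.
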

\subsection{Converting the moment relaxation to the standard SDP}
\subsubsection{The dense case}
\label{sec:convert.standart.SDP}
Let $k\in\N^{\ge k_{\min}}$ be fixed. 
We will present a way to transform SDP \eqref{eq:dual.diag.moment.mat} to the form \eqref{eq:SDP.form}.
By adding slack variables $\mathbf y^{(i)}\in \R^{s(2(k - \lceil g_i \rceil))}$, $i\in[m]$, SDP \eqref{eq:dual.diag.moment.mat} is equivalent to
\begin{equation}\label{eq:moment.hierarchy2}
\tau_k \,:= \,\inf \limits_{\mathbf y, \mathbf y^{(i)}} \left\{ L_{\mathbf y}(f)\ \left|\begin{array}{rl}
& \mathbf W_k(\mathbf y,\mathbf y^{(1)},\dots,\mathbf y^{(m)})\in \mathcal S_k^+\,,\\
&\mathbf M_{k - \lceil g_i \rceil }(\mathbf y^{(i)})=\mathbf M_{k - \lceil g_i \rceil }(g_i\;\mathbf y)\,,\,i\in[m]\,,\\
&\mathbf M_{k - \lceil h_j \rceil }(h_j\;\mathbf y)   = 0\,,\,j\in[l]
\end{array}
\right.\right\}\,,
\end{equation}
where $\mathbf W_k(\mathbf y,\mathbf y^{(1)},\dots,\mathbf y^{(m)}):=\diag(\mathbf M_k(\mathbf y),\mathbf M_{k-\lceil g_1\rceil}(\mathbf y^{(1)}),\dots, \mathbf M_{k-\lceil g_m\rceil}(\mathbf y^{(m)}))$.

Let $\mathcal{V}=\{\mathbf M_k(\mathbf z)\,:\, \mathbf z\in \R^{s(2k)}\}$ and $\mathcal{V}_i=\{\mathbf M_{k-\lceil g_i \rceil }(\mathbf z)\,:\, \mathbf z\in \R^{s(2(k-\lceil g_i \rceil ))}\}$, $i\in[m]$.
Then $\mathcal{V}$ and $\mathcal{V}_i$, $i\in[m]$, are the linear subspaces of the spaces of real symmetric matrices of size $s(k)$ and $s(k-\lceil g_i \rceil )$, $i\in[m]$, respectively.

Denote by $\mathcal{V}^\bot$, $\mathcal{V}_i^\bot$, $i\in[m]$, the orthogonal complements  of $\mathcal{V}$, $\mathcal{V}_i$, $i\in[m]$, respectively. 
In \cite[Appendix A.2 ]{mai2020hierarchy}, we show how to take a basis $\{\hat{\mathbf A}_j\}_{j\in[r]}$ of $\mathcal{V}^\bot$.
Similarly we can take a basis $\{\hat{\mathbf A}_j^{(i)}\}_{j\in[r_i]}$ of $\mathcal{V}^\bot_i$, $i\in[m]$.
Here $r=\dim(\mathcal{V}^\bot)$ and $r_i=\dim(\mathcal{V}_i^\bot)$, $i\in[m]$.

Notice that if $\mathbf X_0$ is a real symmetric matrix of size $s(k)$, then $\mathbf X_0=\mathbf M_k(\mathbf y)$ for some $\mathbf y\in \R^{s({2k})}$ if and only if  $\left< \hat{\mathbf A}_j, \mathbf X_0\right>=0$, $j\in[r]$. 
It implies that if $\mathbf X=\diag(\mathbf X_0,\dots,\mathbf X_m)\in\mathcal S_k$, then there exist $\mathbf y$ and $\mathbf y^{(i)}$, $i\in[m]$, such that 
$\mathbf X=\mathbf W_k(\mathbf y,\mathbf y^{(1)},\dots,\mathbf y^{(m)}) \Leftrightarrow \left< \bar{\mathbf A}, \mathbf X\right>=0\,,\, \bar{\mathbf A} \in \mathcal{B}_1$,
where $\mathcal{B}_1$ involves matrices $\bar{\mathbf A}$ defined as:
\begin{itemize}
    \item $\bar{\mathbf A}=\diag(\hat{\mathbf A}_j,\mathbf 0,\dots,\mathbf 0)$ for some $j\in[r]$;
    \item $\bar{\mathbf A}=\diag(\mathbf 0, \hat{\mathbf A}_j^{(1)},\dots,\mathbf 0)$ for some $j\in[r_1]$;
    \item $\dots$
    \item $\bar{\mathbf A}=\diag(\mathbf 0, \mathbf 0,\dots,\hat{\mathbf A}_j^{(m)})$ for some $j\in[r_m]$.
\end{itemize}
Notice that 
\begin{equation}
\begin{array}{rl}
     |\mathcal{B}_1|=r+\sum_{i\in[m]}r_i=&\displaystyle\frac{\s(k)(\s(k)+1)}{2}-\s(2k)  \\
     & +\displaystyle\sum_{i\in[m]}\left(\frac{\s(k-\lceil g_i\rceil)(\s(k-\lceil g_i\rceil)+1)}{2}-\s(2(k-\lceil g_i\rceil))\right)\,.
\end{array}
\end{equation}

The constraints $\mathbf M_{k - \lceil g_i \rceil }(\mathbf y^{(i)})=\mathbf M_{k - \lceil g_i \rceil }(g_i\;\mathbf y)$, $i\in[m]$, of SDP \eqref{eq:moment.hierarchy2} are equivalent to 
$\mathbf y^{(i)}_{\alpha}=\sum_{\gamma \in \N^n_{2\lceil g_i\rceil}} g_i\mathbf y_{\alpha+\gamma}$, $ \alpha\in \N^n_{2(k - \lceil g_i \rceil)}$, $i\in[m]$.
They can be written as
$    \left< \bar{\mathbf A}, \mathbf W_k(\mathbf y,\mathbf y^{(1)},\dots,\mathbf y^{(m)})\right>=0$, for $ \bar{\mathbf A} \in \mathcal{B}_2$,
where $\mathcal{B}_2$ involves matrices $\bar{\mathbf A}$ defined by 
$\bar{\mathbf A}=\diag(\tilde{\mathbf A},\mathbf 0,\dots, \mathbf 0,\tilde{\mathbf A}^{(i)},\mathbf 0,\dots,\mathbf 0)$,
with $\tilde {\mathbf A}=(\tilde A_{\mu,\nu})_{\mu,\nu\in\N^n_{k}}$ being defined as follows:
\begin{equation}
    \tilde A_{\mu,\nu}=\begin{cases}
g_{i,\gamma}&\text{ if }\mu=\nu\,,\,\mu+\nu=\alpha+\gamma\,,\\
\frac{1}2 g_{i,\gamma}&\text{ if }\mu\ne\nu\,,\,(\mu,\nu)\in \{(\mu_1,\nu_1),(\nu_1,\mu_1)\}\\
&\qquad\text{ with } (\mu_1,\nu_1)=\minimal(\{ (\bar\mu,\bar\nu)\in(\N^n_{k})^2\, :\,\bar\mu+\bar\nu=\alpha+\gamma\})\,,\\
0&\text{ otherwise,}
\end{cases}
\end{equation}
and 
$\tilde {\mathbf A}^{(i)}=(\tilde A_{\mu,\nu}^{(i)})_{\mu,\nu\in\N^n_{k-\lceil g_i \rceil}}$ being defined as follows:
\begin{equation}\label{eq:A.tilde}
    \tilde A_{\mu,\nu}^{(i)}=\begin{cases}
-1&\text{ if }\mu=\nu\,,\,\mu+\nu=\alpha\,,\\
-\frac{1}2 &\text{ if }\mu\ne\nu\,,\,(\mu,\nu)\in \{(\mu_1,\nu_1),(\nu_1,\mu_1)\}\\
&\qquad\text{ with } (\mu_1,\nu_1)=\minimal(\{ (\bar\mu,\bar\nu)\in(\N^n_{k})^2\, :\,\bar\mu+\bar\nu=\alpha\})\,,\\
0&\text{ otherwise,}
\end{cases}
\end{equation}
for some $\alpha\in \N^n_{2(k - \lceil g_i \rceil)}$ and $i\in[m]$.
Notice that $|\mathcal{B}_2|=\sum_{i\in[m]} 2(k - \lceil g_i \rceil)$.
Here $\minimal(T)$ is the minimal element of $T$, for every $T\subseteq \N^{2n}$ with respect to the graded lexicographic order.

The constraints $\mathbf M_{k - \lceil h_j \rceil }(h_j\;\mathbf y)   = 0$, $j\in[l]$, can be simplified as 
$\sum_{\gamma\in\N^n_{2\lceil h_j \rceil}}h_{j,\gamma}y_{\alpha+\gamma}=0$, $\alpha\in\N^n_{2(k-\lceil h_j \rceil)}$, $j\in [l]$.
They are equivalent to the following trace equality constraints:
$\left< \bar{\mathbf A}, \mathbf W_k(\mathbf y,\mathbf y^{(1)},\dots,\mathbf y^{(m)})\right>=0\,,\, \bar{\mathbf A} \in \mathcal{B}_3$,
where $\mathcal{B}_3$ involves matrices  $\bar{\mathbf A}=\diag(\tilde{\mathbf A},\mathbf 0,\dots, \mathbf 0)$,
with $\tilde {\mathbf A}=(\tilde A_{\mu,\nu})_{\mu,\nu\in\N^n_{k}}$ being defined as follows:
\begin{equation*}
    \tilde A_{\mu,\nu}=\begin{cases}
h_{j,\gamma}&\text{ if }\mu=\nu\,,\,\mu+\nu=\alpha+\gamma\,,\\

\frac{1}2 h_{j,\gamma}&\text{ if }\mu\ne\nu\,,\,(\mu,\nu)\in \{(\mu_1,\nu_1),(\nu_1,\mu_1)\}\\
&\qquad\text{ with } (\mu_1,\nu_1)=\minimal(\{ (\bar\mu,\bar\nu)\in(\N^n_{k})^2\, :\,\bar\mu+\bar\nu=\alpha+\gamma\})\,,\\
0&\text{ otherwise.}
\end{cases}
\end{equation*}
Notice that $|\mathcal{B}_3|=\sum_{j\in[l]} 2(k - \lceil h_j \rceil)$.

Let $\cup_{j\in [3]}\mathcal{B}_j=(\bar{\mathbf  A}_i)_{i\in [\zeta_k-1]}$, where 
\begin{equation*}
\begin{array}{rl}
    \zeta_k=1+\sum_{j\in [3]}|\mathcal{B}_j|=&1+\displaystyle\frac{\s(k)(\s(k)+1)}{2}-\s(2k)  \\
     & +\displaystyle\sum_{i\in[m]}\frac{\s(k-\lceil g_i\rceil)(\s(k-\lceil g_i\rceil)+1)}{2}+\sum_{j\in[l]}\s(2(k-\lceil h_j\rceil))\,.
     \end{array}
\end{equation*}
The final constraint $y_{\mathbf 0}=1$ can be rewritten as $\left<\bar{\mathbf A}_{\zeta_k}, \mathbf W_k(\mathbf y,\mathbf y^{(1)},\dots,\mathbf y^{(m)})\right>=1$ with $\bar{\mathbf A}_{\zeta_k}\in \mathcal S_k$ having zero entries  except the top left one $[\bar{ A}_{\zeta_k}]_{\mathbf 0,\mathbf 0}=1$.
Thus we select real vector $\mathbf b_k$ of length $t_k$ such that all entries of $\mathbf b_k$ are zeros except the final one $b_{\zeta_k}=1$.

The function $L_{\mathbf y}(f)=\sum_\gamma f_\gamma y_{\gamma}$ is equal to $\left<\bar{\mathbf C}, \mathbf W_k(\mathbf y,\mathbf y^{(1)},\dots,\mathbf y^{(m)})\right>$ with $\bar {\mathbf C}:=\diag(\tilde {\mathbf C},\mathbf 0,\dots,\mathbf 0)$, where $\tilde {\mathbf C}=(\tilde C_{\mu,\nu})_{\mu,\nu\in\N^n_k}$ is defined by:
\begin{equation*}
    \tilde C_{\mu,\nu}=\begin{cases}
f_{\gamma}&\text{ if }\mu=\nu\,,\,\mu+\nu=\gamma\,,\\
\frac{1}2 f_{\gamma}&\text{ if }\mu\ne\nu\,,\,(\mu,\nu)\in \{(\mu_1,\nu_1),(\nu_1,\mu_1)\}\\
&\qquad\text{ with } (\mu_1,\nu_1)=\minimal(\{ (\bar\mu,\bar\nu)\in(\N^n_{k})^2\, :\,\bar\mu+\bar\nu=\gamma\})\,,\\
0&\text{ otherwise.}
\end{cases}
\end{equation*}
By noting $\bar{\mathbf  X}=\mathbf W_k(\mathbf y,\mathbf y^{(1)},\dots,\mathbf y^{(m)})$, SDP \eqref{eq:moment.hierarchy2} has the standard form
\begin{equation}\label{eq:SDP.form.equivalent}
\tau_k = \inf_{\bar{\mathbf X}\in \mathcal{S}_k^+} \,\{ \,\left< \bar{\mathbf C},\bar{\mathbf X}\right>\,:\,\bar{\mathcal{A}} \bar{\mathbf X}=\mathbf b_k\}\,,
\end{equation}
where $\bar{\mathcal{A}}:\mathcal{S}_k\to \R^{\zeta_k}$ is a linear operator of the form $\bar{\mathcal{A}}\mathbf X=\left[\left< \bar{\mathbf A}_{1},\mathbf X\right>,\dots,\left< \bar{\mathbf A}_{\zeta_k},\mathbf X\right>\right]$.
Since 
$\left< \mathbf U,\mathbf V\right>=\left< \mathbf P_k^{-1}\mathbf U\mathbf P_k^{-1},\mathbf P_k\mathbf V\mathbf P_k\right>$, for all $\mathbf U,\mathbf V\in \mathcal{S}_k$,
by noting $\mathbf X=\mathbf P_k\bar{\mathbf X}\mathbf P_k$, SDP \eqref{eq:SDP.form.equivalent} can be written as \eqref{eq:SDP.form} with $\mathbf A_{k,i}=\mathbf P_k^{-1} \bar{\mathbf A}_i \mathbf P_k^{-1}$, $i\in[\zeta_k]$, and $\mathbf C_k=\mathbf P_k^{-1}\bar{\mathbf C}\mathbf P_k^{-1}$.
\subsubsection{The sparse case}
\label{sec:convert.standart.SDP.cs}
Let $k\in\N^{\ge k_{\min}}$ be fixed. 
We will present a way to transform SDP \eqref{eq:cs-ts.SDP.simple} to the form \eqref{eq:SDP.form.sparse}.
Doing a similar process as in Appendix \ref{sec:convert.standart.SDP} on every clique, by noting \eqref{eq:convert.momentmat.sparse}, for every $j\in[p]$, the constraints 
\begin{equation}
    \begin{cases}
    \mathbf D_k(\mathbf y, I_j) \succeq 0\,,\,y_{\mathbf 0}\,=\,1\,,\\
    \mathbf M_{k - \lceil h_i \rceil }(h_i\;\mathbf y,I_j)   = 0\,,\,i\in W_j\,,
    \end{cases}
\end{equation}
become $\hat {\mathcal A}_j \mathbf X_j=\hat{\mathbf b}_j$ for some linear operator $\hat {\mathcal A}_j:\mathcal{S}_{j,k}\to \R^{\hat \zeta_{j}}$ and vector $\hat{\mathbf b}_j\in \R^{\hat \zeta_{j}}$.
Moreover, $L_{\mathbf y}(f_j)=\left<\mathbf C_j, \mathbf X_j\right>$ for some matrix $\mathbf C_j\in \mathcal{S}_{j,k}$ since $f_j\in\R[x(I_j)]$, for every $j\in[p]$.
Then from \eqref{eq:convert.momentmat.sparse}, the objective function of SDP \eqref{eq:cs-ts.SDP.simple} is $L_{\mathbf y}(f)=\sum_{j\in[p]}\left<\mathbf C_j, \mathbf X_j\right>$.

Next we describe the constraints depending on common moments on cliques.
For every $\alpha\in \cup_{j\in[p]}\N^{I_j}_k$, note $T(\alpha):=\{j\in [p]\,:\,\alpha\in \N^{I_j}_k\}$. 
In other words, $T(\alpha)$ indices the cliques sharing the same moment $y_\alpha$.
For $\alpha\in \cup_{j\in[p]}\N^{I_j}_k$ such that $|T(\alpha)|\ge 2$, for every $j\in T(\alpha)$, let $\hat {\mathbf A}^{(\alpha)}_j\in \mathcal S_{j,k}$ be such that $\left<\hat {\mathbf A}^{(\alpha)}_j, \mathbf X_j\right>=y_\alpha$
It implies the constraints 
$\left<\hat {\mathbf A}^{(\alpha)}_{j_0}, \mathbf X_{j_0}\right>-\left<\hat {\mathbf A}^{(\alpha)}_i, \mathbf X_i\right>=0$, $ i\in T(\alpha)\backslash \{j_0\}$,
for every $\alpha\in \cup_{j\in[p]}\N^{I_j}_k$ such that $|T(\alpha)|\ge 2$, for some $j_0\in T(\alpha)$.
We denote by $\tilde {\mathcal A} \mathbf X =\mathbf 0_{\R^{\tilde \zeta}}$ all these constraints with $\mathbf X=\diag(\mathbf X_j)$.

Set $\zeta:=\sum_{j\in[p]}\hat \zeta_{j}+\tilde \zeta $ and $\mathbf b=[(\hat {\mathbf b}_j)_{j\in[p]},\mathbf 0_{\R^{\tilde \zeta}}] \in \R^\zeta$. 
Define the linear operator $\mathcal A: \prod_{j\in[p]} \mathcal S_{j,k}\to \R^{\zeta}$ such that
$\mathcal A\mathbf X=[(\hat{\mathcal A}_j \mathbf X_j)_{j\in[p]},\tilde {\mathcal A} \mathbf X]$, for all $ \mathbf X=\diag(\mathbf X_j)_{j\in[p]}\in \prod_{j\in[p]}\mathcal{S}_j$.
From \eqref{eq:convert.momentmat.sparse}, the affine constraints of SDP \eqref{eq:cs-ts.SDP.simple} are now equivalent to $\mathcal A\mathbf X=\mathbf b$.

Let $\mathbf A^{(i)}:=\diag((\mathbf A_{i,j})_{j\in[p]})\in \prod_{j\in[p]}\mathcal{S}_j $, $i\in [\zeta]$, be such that 
\[
\mathcal{A}\mathbf X=[\left< \mathbf A^{(1)},\mathbf X\right>,\dots,\left< \mathbf A^{(\zeta)},\mathbf X\right>]
\,,\] 
for all $ \mathbf X=\diag(\mathbf X_j)_{j\in[p]}\in \prod_{j\in[p]}\mathcal{S}_j$.
For every $j\in[p]$, define $\mathcal{A}_j:\mathcal{S}_j\to \R^{\zeta}$ as a linear operator of the form
$\mathcal{A}_j\mathbf X:=[\left< \mathbf A_{1,j},\mathbf X\right>,\dots,\left< \mathbf A_{\zeta,j},\mathbf X\right>]$.
Then $\mathcal{A}\mathbf X= \sum_{j\in[p]} \mathcal{A}_j\mathbf X_j$, for all $ \mathbf X=\diag(\mathbf X_j)_{j\in[p]}\in \prod_{j\in[p]}\mathcal{S}_j$.
Hence we obtain the data $(\mathbf C_{j,k},\mathcal A_{j,k}, \mathbf b_k,\zeta_k)=(\mathbf C_j,\mathcal A_j, \mathbf b,\zeta)$ of the standard form \eqref{eq:SDP.form.sparse} by plugging $k$.

\subsection{Proof of Theorem \ref{theo:suff.cond.CTP}}
\label{proof:suff.cond.CTP}
\begin{proof}
\begin{enumerate}
    \item Let $k\in\N^{\ge k_{\min}}$ and assume that $\R^{>0}\subseteq Q_k^\circ(g)+I_k(h)$. Then there exists $a_k>0$ such that
    \begin{equation}\label{eq:rep}
    a_k=\mathbf v_k^\top \mathbf G_0 \mathbf v_k+\sum_{i\in[m]} g_i \mathbf v_{k-\lceil g_i\rceil}^\top \mathbf G_i \mathbf v_{k-\lceil g_i\rceil}+ \sum_{j\in[l]} h_j \mathbf v_{2(k-\lceil h_j\rceil)}^\top \mathbf u_j \,,
\end{equation}
for some $\mathbf G_i\succ 0$, $i\in\{0\}\cup [m]$ and real vector $\mathbf u_j$, $j\in[l]$.
We denote by $\mathbf G_i^{1/2}$ the square root of $\mathbf G_i$, $i\in\{0\}\cup [m]$. 
Then $\mathbf G_i^{1/2}$ is well-defined and $\mathbf G_i^{1/2}\succ 0$. 
Set $\mathbf P_k=\diag(\mathbf G_0^{1/2},\dots,\mathbf G_m^{1/2})$.
Let $\mathbf y \in \R^{\s(2k)}$ such that $\mathbf M_k(h_j\mathbf y)=0$, $j\in[l]$, and $ y_{\mathbf 0}=1$.
Then 
\begin{equation}
    L_{\mathbf y}\left(\sum_{j\in[l]} h_j \mathbf v_{2(k-\lceil h_j\rceil)}^\top \mathbf u_j\right)=\sum_{j\in[l]}\sum_{\alpha\in\N^n_{2(k-\lceil h_j\rceil)}}u_{j,\alpha}L_{\mathbf y}(h_j\mathbf x^\alpha)=0\,.
    \end{equation}
From this and \eqref{eq:rep},
\begin{equation*}
\begin{array}{rl}
     a_k&=L_{\mathbf y}(\mathbf v_k^\top \mathbf G_0 \mathbf v_k + \sum_{i\in[m]}g_i\mathbf v_{k-\lceil g_i\rceil}^\top \mathbf G_i \mathbf v_{k-\lceil g_i\rceil} ) \\
     & =\trace(\mathbf M_k(\mathbf y)\mathbf G_0)+\sum_{i\in[m]}\trace(\mathbf M_{k-1}(g_i\mathbf y)\mathbf G_i)\\
     &=\trace(\mathbf G_{0}^{1/2}\mathbf M_k(\mathbf y)\mathbf G_{0}^{1/2})+\sum_{i\in[m]}\trace(\mathbf G_{i}^{1/2}\mathbf M_{k-1}(g_i\mathbf y)\mathbf G_{i}^{1/2})\\
     &=\trace(\mathbf P_{k} \mathbf D_k(\mathbf y)\mathbf P_{k})\,,
\end{array}
\end{equation*}
yielding the first statement.    
\item The ``if'' part comes from the first statement. 
Let us prove the ``only if'' part.
Assume that POP \eqref{eq:POP.def} has CTP (Definition \ref{def:ctp}). 
Let $\mathbf a\in S(g)$, $\mathbf y=(y_\alpha)_{\alpha\in\N^n}$ be the moment sequence of the Dirac measure $\delta_{\mathbf a}$. Let $k\in\N^{\ge k_{\min}}$ be fixed.
Since $\mathbf P_{k}\in \mathcal{S}_k$, 
$\mathbf P_k=\diag(\mathbf W_0,\dots,\mathbf W_m)$.
Then $\mathbf W_i^2\succ 0$, $i\in\{0\}\cup[m]$ since $\mathbf P_k\succ 0$.
Let us define the polynomial $w:=\mathbf v_k^\top \mathbf W_0^2 \mathbf v_k + \sum_{i\in[m]}g_i\mathbf v_{k-\lceil g_i\rceil}^\top \mathbf W_i^2 \mathbf v_{k-\lceil g_i\rceil}$.
By assumption,
\begin{equation*}
\begin{array}{rl}
     a_k&=\trace(\mathbf P_{k} \mathbf D_k(\mathbf y)\mathbf P_{k})\\
     &=\trace(\mathbf W_{0}\mathbf M_k(\mathbf y)\mathbf W_{0})+\sum_{i\in[m]}\trace(\mathbf W_{i}\mathbf M_{k-1}(g_i\mathbf y)\mathbf W_{i})\\
     & =\trace(\mathbf M_k(\mathbf y)\mathbf W_0^2)+\sum_{i\in[m]}\trace(\mathbf M_{k-1}(g_i\mathbf y)\mathbf W_i^2)\\
     &=L_{\mathbf y}(\mathbf v_k^\top \mathbf W_0^2 \mathbf v_k + \sum_{i\in[m]}g_i\mathbf v_{k-\lceil g_i\rceil}^\top \mathbf W_i^2 \mathbf v_{k-\lceil g_i\rceil} ) =\int_{\R^n} w \delta_{\mathbf a}=w(\mathbf a)\,,
\end{array}
\end{equation*}
It implies that $w-a_k$ vanishes on $S(g)$. Since $S(g)$ has nonempty interior, $w=a_k$, yielding the second statement.
\end{enumerate}
\end{proof}

\subsection{Proof of Proposition \ref{prop:suff.cond.feas}}
\label{proof:suff.cond.feas}
\begin{proof}
Let Assumption \ref{ass:bound.cons} hold. 
It is sufficient to show that \eqref{eq:find.CTP.diag} has a feasible solution for every $k\in\N^{\ge k_{\min}}$.

Let $\mathbf u=(u_j)_{j\in[n]}\subseteq\N^{\le m}$ be defined by
\begin{equation}
    u_j:=|\{i\in[r]\ :\ j\in T_i\}|+|\{i\in[m]\backslash[2r]\ :\ j\in T_{i}\}|\,,\,\qquad\forall\ j\in[n]\,.
\end{equation}
Since $(\cup_{i\in[r]} T_i) \cup (\cup_{i\in[m]\backslash[2r]} T_{i})=[n]$, one has  $u_j\in\N^{\ge 1}$, $j\in[n]$.
Moreover,
\begin{equation}
    \|\mathbf u\circ \mathbf x\|_2^2=\sum_{i\in[r]}\|\mathbf x(T_i)\|^2_2+\sum_{i\in[m]\backslash[2r]}\|\mathbf x(T_{i})\|^2_2\,.
\end{equation}
With 
$R:=\sum_{i\in[r]}(\underline R_i+\overline R_{i})+\sum_{i\in[m]\backslash[2r]}\overline R_{i}$,
by replacing $\mathbf x$ by $\mathbf u\circ \mathbf x$ in Lemma \ref{lem:equality}, one obtains that for all $k\in\N^{\ge k_{\min}}$,
\begin{equation}\label{eq:equality.multi}
    (R+1)^k=(1+\|\mathbf u\circ \mathbf x\|^2_2)^k+\Lambda_{k-1}\sum_{i\in[m]}\delta_ig_i\,,
\end{equation}
where $\Lambda_{k-1}:=\sum_{j=0}^{k-1}(R+1)^j(1+\|\mathbf u\circ \mathbf x\|^2_2)^{k-j-1}$ and
\begin{equation}
    \delta_i:=\frac{\underline R_i}{\overline R_i-\underline R_i}\,,\,\delta_{i+r}:=\frac{\overline R_i}{\overline R_i-\underline R_i}\,,\,i\in[r]\,,\,\text{ and }\delta_{q}=1\,,\,q\in[m]\backslash[2r].
\end{equation}
It is due to the fact that 
\begin{equation}
    R-\|\mathbf u\circ \mathbf x\|_2^2=\sum_{i\in[r]}(\underline R_i+\overline R_i-\|\mathbf x(T_i)\|^2_2)+\sum_{i\in[m]\backslash[2r]}(\overline R_{i}-\|\mathbf x(T_{i})\|^2_2)\,,
\end{equation}
and 
$\underline R_i+\overline R_i-\|\mathbf x(T_i)\|^2_2=\delta_ig_i+\delta_{i+r}g_{i+r}$, for all $ i\in[r]$.
For each $k\in\N^{\ge k_{\min}}$, let $(\theta_{k,\alpha})_{\alpha\in\N^n_k}\subseteq \R^{>0}$ and $(\eta_{k-1,\alpha})_{\alpha\in\N^n_{k-1}}\subseteq \R^{>0}$ be such that 
\[(1+\|\mathbf u \circ \mathbf
x\|_2^2)^k=\sum_{\alpha\in\N^n_k}\theta_{k,\alpha}\mathbf x^{2\alpha}\quad \text{and}\quad \Lambda_{k-1}=\sum_{\alpha\in\N^n_{k-1}}\eta_{k-1,\alpha}\mathbf x^{2\alpha}\,,\]
and define the diagonal matrices
\begin{equation}
\mathbf G_{k}^{(0)}:=\diag((\theta_{k,\alpha})_{\alpha\in\N^n_k})\quad\text{and}\quad\mathbf G_{k-1}^{(i)}:=\diag((\delta_i\eta_{k-1,\alpha})_{\alpha\in\N^n_{k-1}})\,,\,i\in[m]\,.
\end{equation}
Then \eqref{eq:equality.multi} yields that for every $k\in\N^{\ge k_{\min}}$:
\[(R+1)^k=\mathbf v_k^\top \mathbf G_{k}^{(0)} \mathbf v_k + \sum_{i\in[m]}g_i\mathbf v_{k-1}^\top \mathbf G_{k-1}^{(i)} \mathbf v_{k-1} \,. \]
Hence $((R+1)^k,\mathbf G_{k}^{(i)},\mathbf 0)$ is a feasible solution of \eqref{eq:find.CTP.diag}, for every $k\in\N^{\ge k_{\min}}$.
\end{proof}

\subsection{Proof of Proposition \ref{prop:CTP.equidegree}}
\label{sec:proof.equidegree}

\begin{proof}
Let Assumption \ref{ass:equidegree} hold with $u:=\lceil g_i\rceil$, $i\in[n+1]$.
For every $k\in\N^{\ge k_{\min}}$, letting $\Lambda_{k-1}:=\sum_{j=0}^{k-1}( R+1)^j(1+\|\mathbf x\|^2_2)^{k-j-1}$ and $\Theta_t:=(1+\|\mathbf x\|^2_2)^t$, for $t\in\N$, Lemma \ref{lem:equality} yields:
$(R+1)^k=\Theta_k+g_m\Lambda_{k-1}$.
It implies that for every $k\in\N^{\ge k_{\min}}$,
\begin{equation}\label{eq:pos.cert}
    (R+1)^k=(\Theta_k-\frac{L}{L+1}\Theta_{k-u})+\frac{1}{L+1}\Theta_{k-u}\sum_{i\in[m-1]}g_i+g_{m}\Lambda_{k-1}\,.
\end{equation}
It is due to the fact that $\sum_{i\in[m-1]}g_i=L$.
For each $k\in\N^{\ge k_{\min}}$, let us consider the following sequences:
\begin{itemize}
    \item $(\nu_{k,\alpha})_{\alpha\in\N^n_k}\subseteq \R^{>0}$ such that $\Theta_k-\frac{L}{L+1}\Theta_{k-u}=\sum_{\alpha\in\N^n_k}\nu_{k,\alpha}\mathbf x^{2\alpha}$;
    \item $(\theta_{k-u,\alpha})_{\alpha\in\N^n_{k-u}}\subseteq \R^{>0}$ such that $\frac{1}{L+1}\Theta_{k-u}=\sum_{\alpha\in\N^n_{k-u}}\theta_{k-u,\alpha}\mathbf x^{2\alpha}$;
    \item $(\eta_{k-1,\alpha})_{\alpha\in\N^n_{k-1}}\subseteq \R^{>0}$ such that $\Lambda_{k-1}=\sum_{\alpha\in\N^n_{k-1}}\eta_{k-1,\alpha}\mathbf x^{2\alpha}$.
\end{itemize}
For each $k\in\N^{\ge k_{\min}}$, define the diagonal matrices: $\mathbf G_{k}^{(0)}:=\diag((\nu_{k,\alpha})_{\alpha\in\N^n_k})$, 
\begin{equation*}
    \mathbf G_{k-u}^{(1)}:=\diag((\theta_{k-u,\alpha})_{\alpha\in\N^n_{k-u}})\,,\quad\text{and}\quad
    \mathbf G_{k-1}^{(2)}:=\diag((\eta_{k-1,\alpha})_{\alpha\in\N^n_{k-1}})\,.
\end{equation*}
Then \eqref{eq:pos.cert} yields that for every $k\in\N^{\ge k_{\min}}$,
\begin{equation}\label{eq:repres.simplex}
    (R+1)^k=\mathbf v_k^\top \mathbf G_{k}^{(0)} \mathbf v_k + \mathbf v_{k-u}^\top \mathbf G_{k-u}^{(1)} \mathbf v_{k-u}\sum_{i\in[m-1]}g_i+ \mathbf v_{k-1}^\top \mathbf G_{k-1}^{(2)} \mathbf v_{k-1}g_{m}\,.
\end{equation}
Hence $((R+1)^k,\mathbf G_{k}^{(i)},\mathbf 0)$ is a feasible solution of \eqref{eq:find.CTP.diag}, for every $k\in\N^{\ge k_{\min}}$.
By using Lemma \ref{lem:feas.LP}, the conclusion follows.
\end{proof}

\subsection{Proof of Corollary \ref{coro:equi.pop.ctp}}
\label{proof:coro:equi.pop.ctp}
\begin{proof}
Let $\tilde g:=\{\tilde g_{i}\}_{i\in[m+2]}$. 
Then $\{\tilde g_{i}\}_{i\in[m]}$ have the equivalent degree, i.e., there exists $u \in \N$ such that $\lceil \tilde g_{i} \rceil = u $, for all $i \in[m]$. 
Thus Assumption \ref{ass:equidegree} holds for $g \leftarrow \tilde g$, $m\leftarrow m+2$. 
By Proposition \ref{prop:CTP.equidegree}, \eqref{eq:find.CTP.diag} has a feasible solution with $g \leftarrow \tilde g$ for every order $k\in\N^{\ge k_{\min}}$.
It implies that for every $k\in\N^{\ge k_{\min}}$, there exist $\mathbf u^{(j)}_k\in \R^{\s(2(k-\lceil h_j\rceil))}$, $j\in[l]$, and
\begin{equation*}
    (\eta^{(0)}_{k,\alpha})_{\alpha\in \N^n_{k}}\subseteq \R^{>0}\,,\quad (\eta^{(i)}_{k-u,\alpha})_{\alpha\in \N^n_{k-u}}\subseteq \R^{>0}\,,\,i\in[m+1]\,,\quad  (\eta^{(m+2)}_{k-1,\alpha})_{\alpha\in \N^n_{k-1}}\subseteq \R^{>0}
\end{equation*}
 such that
 \begin{equation*}
 \begin{array}{rl}
     1=&\mathbf v_k^\top \diag((\eta^{(0)}_{k,\alpha})_{\alpha\in \N^n_{k}}) \mathbf v_k+\sum_{i\in[m+1]}\tilde g_i \mathbf v_{k-u}^\top \diag((\eta^{(i)}_{k-u,\alpha})_{\alpha\in \N^n_{k-u}}) \mathbf v_{k-u}\\
     &+\tilde g_{m+2} \mathbf v_{k-1}^\top \diag((\eta^{(m+2)}_{k-1,\alpha})_{\alpha\in \N^n_{k-1}}) \mathbf v_{k-1}+ \sum_{j\in[l]} h_j \mathbf v_{2(k-\lceil h_j\rceil)}^\top\mathbf u^{(j)}_k\,.
     \end{array}
 \end{equation*}
Let $k\in\N^{\ge k_{\min}}$ be fixed.
We define the following polynomials:
 \begin{itemize}
     \item $\sigma_0:=\mathbf v_k^\top \diag((\eta^{(0)}_{k,\alpha})_{\alpha\in \N^n_{k}}) \mathbf v_k=\sum_{\alpha\in \N^n_{k}} \eta^{(0)}_{k,\alpha}\mathbf x^{2\alpha}$,
     \item $\sigma_i:=\mathbf v_{k-u}^\top \diag((\eta^{(i)}_{k-u,\alpha})_{\alpha\in \N^n_{k-u}}) \mathbf v_{k-u}=\sum_{\alpha\in \N^n_{k-u}} \eta^{(i)}_{k-u,\alpha}\mathbf x^{2\alpha}$, $i\in[m+1]$,
     \item $\sigma_{m+2}:=\mathbf v_{k-1}^\top \diag((\eta^{(m+2)}_{k-1,\alpha})_{\alpha\in \N^n_{k-1}}) \mathbf v_{k-1}=\sum_{\alpha\in \N^n_{k-1}} \eta^{(m+2)}_{k-1,\alpha}\mathbf x^{2\alpha}$,
     \item $\psi_j:=\mathbf v_{2(k-\lceil h_j\rceil)}^\top\mathbf u^{(j)}_k$, $j\in[l]$.
 \end{itemize}
From these and since $\tilde g_i:=g_i(1+\|\mathbf x\|_2^2)^{u-\lceil g_i\rceil}$, for $i\in[m]$, one has
\begin{equation}\label{eq:rep.SOS.1}
\begin{array}{rl}
    1=&\sigma_0+\sum_{i\in[m]}\sigma_i\tilde g_i+ \sum_{j\in[l]} \psi_j h_j =\sigma_0+\sum_{i\in[m]}\sigma_i(1+\|\mathbf x\|_2^2)^{u-\lceil g_i\rceil} g_i\\
    &+\tilde g_{m+1} \sigma_{m+1}+\tilde g_{m+2} \sigma_{m+2} 
    + \sum_{j\in[l]} \psi_j h_j
    \,.
    \end{array}
\end{equation}

Then there exist  
$(\theta^{(i)}_{k-\lceil g_i\rceil ,\alpha})_{\alpha\in \N^n_{k-\lceil g_i\rceil}}\subseteq \R^{>0}$, $i\in[m]$,
 such that
 \begin{equation}
     \sigma_i(1+\|\mathbf x\|_2^2)^{u-\lceil g_i\rceil}=\sum_{\alpha\in \N^n_{k-\lceil g_i\rceil}} \theta^{(i)}_{k-\lceil g_i\rceil,\alpha}\mathbf x^{2\alpha}\,,\, i\in[m]\,.
 \end{equation}
Thus \eqref{eq:rep.SOS.1} becomes
 \begin{equation}
 \begin{array}{rl}
     1=&\mathbf v_k^\top \diag((\eta^{(0)}_{k,\alpha})_{\alpha\in \N^n_{k}}) \mathbf v_k+\sum_{i\in[m]}g_i \mathbf v_{k-\lceil g_i\rceil}^\top \diag((\theta^{(i)}_{k-\lceil g_i\rceil,\alpha})_{\alpha\in \N^n_{k-\lceil g_i\rceil}}) \mathbf v_{k-\lceil g_i\rceil}\\
     &+\tilde g_{m+1} \mathbf v_{k-u}^\top \diag((\eta^{(m+1)}_{k-u,\alpha})_{\alpha\in \N^n_{k-u}}) \mathbf v_{k-u}\\
     &+\tilde g_{m+2} \mathbf v_{k-1}^\top \diag((\eta^{(m+2)}_{k-1,\alpha})_{\alpha\in \N^n_{k-1}}) \mathbf v_{k-1}+ \sum_{j\in[l]} h_j \mathbf v_{2(k-\lceil h_j\rceil)}^\top\mathbf u^{(j)}_k\\
     &\in Q^\circ_k(g\cup\{\tilde g_{m+1},\tilde g_{m+2}\})+I_k(h)\,,
     \end{array}
 \end{equation}
 since 
 \begin{itemize}
     \item $\diag((\eta^{(0)}_{k,\alpha})_{\alpha\in \N^n_{k}})\succ 0$, $\diag((\theta^{(i)}_{k-\lceil g_i\rceil,\alpha})_{\alpha\in \N^n_{k-\lceil g_i\rceil}})\succ 0$, $i\in[m]$,
     \item $\diag((\eta^{(m+1)}_{k-u,\alpha})_{\alpha\in \N^n_{k-u}})\succ 0$, and $\diag((\eta^{(m+2)}_{k-1,\alpha})_{\alpha\in \N^n_{k-1}})\succ 0$.
 \end{itemize}
 It yields that  \eqref{eq:find.CTP.diag} has a feasible solution with $g \leftarrow g\cup \{\tilde g_{m+1},\tilde g_{m+2}\}$, for every order $k\in\N^{\ge k_{\min}}$.
 
\end{proof}
\subsection{Proof of Proposition \ref{prop:suff.cond.feas.cs}}
\label{sec:proof.prop:suff.cond.feas.cs}
\begin{proof}
To prove that POP \eqref{eq:POP.def} has CTP on each clique of variables, it is sufficient to show that \eqref{eq:find.CTP.cliq} has a feasible solution, for every $k\in\N^{\ge k_{\min}}$ and for every $j\in [p]$ due to Lemma \ref{lem:feas.LP.cs}.

For every $j\in[p]$, let $\mathbf u^{(j)}=(u_i^{(j)})_{i\in I_j}\subseteq\N^{\le |J_j|}$ be  defined by
\begin{equation}
    u_i^{(j)}=|\{q\in J_j\cap[r]\ :\ i\in T_q\}|+|\{q\in J_j\backslash[2r]\ :\ i\in T_q\}|\,,\,\quad i\in I_j\,.
\end{equation}
For every $j\in[p]$, one has $u_i^{(j)}\in\N^{\ge 1}$, $i\in I_j$, according to $(\cup_{q\in J_j\cap [r]} T_q)\cup (\cup_{q\in J_j\backslash [2r]} T_q)=I_j$.
Moreover,
\begin{equation}
    \|\mathbf u^{(j)}\circ \mathbf x(I_j)\|_2^2=\sum_{i\in J_j \cap[r]}\|\mathbf x(T_i)\|^2_2+\sum_{i\in J_j \backslash[2r]}\|\mathbf x(T_{i})\|^2_2\,,\,\quad\forall j\in[p]\,.
\end{equation}
 For every $j\in[p]$, with 
$R^{(j)}:=\sum_{i\in J_j \cap[r]}(\underline R_i+\overline R_{i})+\sum_{i\in J_j \backslash[2r]}\overline R_{i}$,
 by replacing $\mathbf x$ (resp. $R$) by $\mathbf u^{(j)}\circ \mathbf x(I_j)$ (resp. $R^{(j)}$) in Lemma \ref{lem:equality}, we obtain 
\begin{equation}\label{eq:rep.cs}
    (R^{(j)}+1)^k=(1+\|\mathbf u^{(j)}\circ \mathbf x(I_j)\|^2_2)^k+\Lambda_{ k-1}^{(j)}\sum_{i\in J_j}\delta_ig_i\,,\,\forall j\in [p]\,,\,\forall k\in\N^{\ge k_{\min}}\,,
\end{equation}
where $\Lambda_{ k-1}^{(j)}:=\sum_{r=0}^{k-1}(R^{(j)}+1)^r(1+\|\mathbf u^{(j)}\circ \mathbf x(I_j)\|^2_2)^{k-r-1}$ and
\begin{equation}
    \delta_i:=\frac{\underline R_i}{\overline R_i-\underline R_i}\,,\,\delta_{i+r}:=\frac{\overline R_i}{\overline R_i-\underline R_i}\,,\,i\in J_j\cap [r]\text{ and }\delta_{q}=1\,,\,q\in J_j\backslash [2r].
\end{equation}
It is due to the fact that 
\begin{equation}
    R^{(j)}-\|\mathbf u^{(j)}\circ \mathbf x(I_j)\|_2=\sum_{i\in J_j\cap [r]}(\underline R_i+\overline R_i-\|\mathbf x(T_i)\|^2_2)+\sum_{i\in J_j\backslash [2r]}(\overline R_{i}-\|\mathbf x(T_{i})\|^2_2)\,,
\end{equation}
and 
$\underline R_i+\overline R_i-\|\mathbf x(T_i)\|^2_2=\delta_ig_i+\delta_{i+r}g_{i+r}$, $i\in J_j\cap [r]$.
For every $j\in [p]$, for each $k\in\N^{\ge k_{\min}}$, let $(\theta_{k,\alpha}^{(j)})_{\alpha\in\N^{I_j}_k}\subseteq \R^{>0}$ and $(\eta^{(j)}_{k-1,\alpha})_{\alpha\in\N^{I_j}_{k-1}}\subseteq \R^{>0}$ be such that 
\[(1+\|\mathbf u^{(j)} \circ \mathbf
x(I_j)\|_2^2)^k=\sum_{\alpha\in\N^{I_j}_k}\theta_{k,\alpha}^{(j)}\mathbf x^{2\alpha}\quad \text{and}\quad \Lambda_{k-1}^{(j)}=\sum_{\alpha\in\N^{I_j}_{k-1}}\eta_{k-1,\alpha}^{(j)}\mathbf x^{2\alpha}\,,\]
and define the diagonal matrices:
\begin{equation}
\mathbf G_{k}^{(j,0)}:=\diag((\theta_{k,\alpha}^{(j)})_{\alpha\in\N^{I_j}_k})\quad\text{and}\quad\mathbf G_{k-1}^{(j,i)}:=\diag((\delta_i\eta^{(j)}_{k-1,\alpha})_{\alpha\in\N^{I_j}_{k-1}})\,,\,i\in J_j\,.
\end{equation}
For every $j\in[p]$, \eqref{eq:rep.cs} yields that for every $k\in\N^{\ge k_{\min}}$,
\begin{equation}
    (R^{(j)}+1)^k=(\mathbf v_k^{I_j})^\top \mathbf G_{k}^{(j,0)} \mathbf v_k^{I_j} + \sum_{i\in J_j}g_i(\mathbf v_{k-1}^{I_j})^\top \mathbf G_{k-1}^{(j,i)} \mathbf v_{k-1}^{I_j}\,.
\end{equation}
Hence $((R^{(j)}+1)^k,\mathbf G_{k}^{(j,i)},\mathbf 0)$ is  a feasible solution of \eqref{eq:find.CTP.cliq}, for every $k\in\N^{\ge k_{\min}}$ and for every $j\in [p]$.
\end{proof}

\small

\end{document}